\documentclass[11pt,namelimits,sumlimits,a4paper]{amsart}
\usepackage{cite}
\usepackage{comment}

\usepackage{amssymb,amsmath}
\usepackage[mathscr]{eucal}
\usepackage{slashed}

\textheight 22.55cm\topmargin 0.05 in
\textheight 23.15cm
\textwidth 16.5cm \oddsidemargin -0.1cm \evensidemargin -0.1cm

\usepackage[T1]{fontenc}
\usepackage[UKenglish]{babel}
\usepackage{amsfonts}
\usepackage{fancyhdr}
\usepackage{graphicx}
\usepackage{amsmath}
\usepackage{amsthm}
\usepackage{amsmath,amscd}
\usepackage{latexsym}
\usepackage{cite}
\usepackage{amssymb,amsmath}
\usepackage{tensor}

\usepackage{comment}
\usepackage[mathscr]{eucal}
\usepackage{slashed}
\usepackage{times,psfrag}
\usepackage{mathtools}
\usepackage{hyperref}
\usepackage{multirow}
\usepackage{longtable}
\usepackage{bm}
\usepackage[all]{xy}
\usepackage{fancyhdr}
\usepackage{float}
\usepackage{esint}
\usepackage{layout}
\usepackage{enumitem}
\usepackage{lmodern}
\usepackage[rgb]{xcolor}
\usepackage{tikz}
\usetikzlibrary{shapes,arrows}
\usetikzlibrary{matrix,arrows}
\usepackage{dsfont}
\usepackage{tikz-cd}
\usepackage{stmaryrd}

\numberwithin{equation}{section}
\newtheorem{theorem}[equation]{Theorem}
\newtheorem{lemma}[equation]{Lemma}
\newtheorem{prop}[equation]{Proposition}

\newtheorem*{theorem*}{Theorem}

\theoremstyle{definition}
\newtheorem{definition}[equation]{Definition}
\newtheorem{example}[equation]{Example}

\theoremstyle{remark}
\newtheorem{remark}[equation]{Remark}
\newtheorem*{remark*}{Remark}

\setcounter{table}{0}

\newcommand{\ie}{\emph{i.e.} }
\newcommand{\eg}{\emph{e.g.} }
\newcommand{\cf}{\emph{cf.} }

\newcommand{\beq}{\begin{equation}}
\newcommand{\eeq}{\end{equation}}
\newcommand{\bea}{\begin{eqnarray}}
\newcommand{\eea}{\end{eqnarray}}

\newcommand{\A}{\mathbb{A}}
\newcommand{\C}{\mathbb{C}}

\newcommand{\R}{\mathbb{R}}

\newcommand{\Z}{\mathbb{Z}}
\newcommand{\N}{\mathbb{N}}

\newcommand{\HH}{\mathbb{H}}

\newcommand{\PP}{\mathbb{P}}

\newcommand{\ra}{\rightarrow}

\newcommand{\dvol}{\operatorname{dv}}

\newcommand{\Imag}{\operatorname{Im}}

\newcommand{\Trace}{\operatorname{Trace}}

\newcommand{\Ad}{\textrm{Ad}}

\newcommand{\Lie}[1]{\mathfrak{#1}}

\newcommand{\tu}[1]{\textup{#1}}

\newcommand{\unitary}[1]{\textup{U$(#1)$}}

\def\co{\colon\thinspace}

\newcounter{mtheorem}

\setcounter{mtheorem}{0}

\flushbottom
\frenchspacing

\begin{document}
\title{Calorons and constituent monopoles}
\author{Lorenzo Foscolo \and Calum Ross}
\address{Department of Mathematics, University College London, London WC1E 6BT, United Kingdom}
\thanks{{\tt l.foscolo@ucl.ac.uk}, {\tt calum.ross@ucl.ac.uk}}

\begin{abstract}
We study anti-self-dual Yang--Mills instantons on $\R^{3}\times S^{1}$, also known as calorons, and their behaviour under collapse of the circle factor. In this limit, we make explicit the decomposition of calorons in terms of constituent pieces which are essentially charge $1$ monopoles. We give a gluing construction of calorons in terms of the constituents and use it to compute the dimension of the moduli space. The construction works uniformly for structure group an arbitrary compact semi-simple Lie group.
\end{abstract}
\maketitle

\section{Introduction}

This paper is motivated by the study of the behaviour of 4-dimensional anti-self-dual Yang-Mills instantons under codimension-1 collapse. We focus on intantons on the flat model $\R^3 \times S^1$ where the circle has radius $\epsilon\ra 0$. In the literature these periodic instantons are often referred to as \emph{calorons}. We construct families of calorons that can be qualitatively described as superpositions of building blocks localised around points in the collapsed limit $\R^3$, glued into a singular $S^1$--invariant abelian background obtained from a sum of Dirac monopoles. All the calorons we produce have ``maximal symmetry breaking'' at infinity, \ie the centraliser of the holonomy around circles $\{x \}\times S^1$ for $|x|\gg 1$ is a maximal torus in the structure group $G$. The approximation in terms of simpler building blocks is increasingly accurate as $\epsilon\ra 0$ and we expect our construction captures some generic behaviour of instantons under codimension-1 collapse.

The building blocks in our construction are simple explicit ``fundamental'' calorons obtained from the charge $1$ $SU(2)$ monopole on $\R^3$ and suitable embeddings of $SU(2)$ into a higher rank compact semi-simple structure group $G$. For $G=SU(2)$ there are two different types of fundamental calorons: one is the charge $1$ monopole lifted to $\R^3\times S^1$ as a circle invariant instanton; the other type of fundamental caloron, that we call a ``rotated'' monopole, is \emph{not} circle invariant (in a way compatible with a fixed framing at infinity) and arises from the non-trivial loop in the moduli space of charge $1$ monopoles. For higher rank $G$, the fundamental calorons are obtained by embedding the charge $1$ monopole along one of the simple coroots and the rotated monopole along the lowest negative coroot.

We refer to Theorem \ref{thm:Existence} in the paper for a precise statement of our existence result. As a consequence, we establish the existence of calorons with non-trivial holonomy for arbitrary compact semisimple structure group.

\begin{theorem*} Fix a compact simply connected semi-simple Lie group $G$ of rank $\tu{rk}$ with Lie algebra $\Lie{g}$, a generic holonomy parameter $\omega\in \Lie{g}$, an instanton number $n_0\in \Z$ and a total magnetic charge $\gamma_\tu{m}=\sum_{\mu=0}^{\tu{rk}}{n_\mu\, \alpha_\mu^\vee}$ in the coroot lattice of $G$. If $n_\mu\geq 0$ for all $\mu=0,\dots, \tu{rk}$, then the moduli space of calorons $\mathcal{M}(\omega,\gamma_{\tu{m}},n_0)$ with structure group $G$ is non-empty.	
\end{theorem*}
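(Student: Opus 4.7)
The strategy is a gluing construction following the qualitative picture sketched in the introduction. Given $(\omega,\gamma_\tu{m},n_0)$ with $n_\mu\geq 0$, we first build a model connection $A_0$ on $\R^3\times S^1$ by prescribing, for each $\mu=1,\dots,\tu{rk}$, a cluster of $n_\mu$ widely separated points in $\R^3$ and inserting at each of them a copy of the fundamental caloron obtained by embedding the charge-$1$ $SU(2)$ monopole along the simple coroot $\alpha_\mu^\vee$ and lifting it as an $S^1$-invariant instanton. We additionally insert $n_0$ copies of the rotated fundamental caloron embedded along the lowest negative coroot $\alpha_0^\vee$; each of these carries instanton number $1$ and is \emph{not} $S^1$-invariant in a way compatible with the framing at infinity, so their contribution accounts precisely for the integer $n_0$. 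Outside a neighbourhood of the insertion points, $A_0$ is taken to equal the $S^1$-invariant abelian background obtained from a sum of Dirac monopoles chosen so that the asymptotic holonomy equals $\omega$ and the total magnetic charge equals $\gamma_\tu{m}=\sum_\mu n_\mu\alpha_\mu^\vee$.

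A local calculation then shows that $A_0$ is anti-self-dual except in a thin transition annulus around each insertion point, where the error $F_{A_0}^+$ can be controlled by a small parameter (a function of the collapse radius $\epsilon$ and of the minimal distance between insertion points). The non-negativity hypothesis $n_\mu\geq 0$ enters essentially here: it guarantees that every building block is a genuine monopole rather than an antimonopole, so the Dirac singularities of the abelian background are resolved with matching sign and orientation, and the ansatz sits in the topological component labelled by $(\omega,\gamma_\tu{m},n_0)$ rather than in a component with mixed charges.

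The remaining task is to perturb $A_0$ to a true caloron by solving $F_{A_0+a}^+=0$ via a contraction-mapping argument in weighted Sobolev or H\"older spaces on $\R^3\times S^1$ adapted to the asymptotic geometry. The genericity of $\omega$, \ie maximal symmetry breaking at infinity, forces the asymptotic operator to be uniformly Fredholm on the non-abelian components of $\Lie{g}$; combined with the explicit invertibility on the abelian components (which reduces to the Laplacian on $\R^3\times S^1$ with appropriate weights), this yields a right inverse to the linearised anti-self-duality operator $d_{A_0}^+\oplus d_{A_0}^*$ with norm bounded uniformly in the gluing parameter. The main obstacle is exactly this uniform linear estimate in the degenerating regime: one must patch local inverses constructed near each fundamental caloron (whose approximate kernels come from translations, rotations, and framing rotations of the building block) with the global inverse on the abelian background, and show that the resulting mismatch is contractible. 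Once the uniform right inverse is established, a standard Banach fixed-point iteration produces a genuine caloron realising $(\omega,\gamma_\tu{m},n_0)$, hence $\mathcal{M}(\omega,\gamma_\tu{m},n_0)\neq\emptyset$.
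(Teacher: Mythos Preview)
Your proposal is correct and follows the same gluing architecture as the paper: fundamental calorons embedded along the simple coroots and the rotated monopole along $\alpha_0^\vee$, desingularising a sum-of-Dirac-monopoles abelian background, followed by a perturbation in weighted H\"older spaces. Two implementation choices in the paper differ from your sketch and are worth flagging. First, rather than inverting the first-order operator $d^+\oplus d^*$ directly, the paper writes the perturbation as $a=d_{\A'_\epsilon}^* u$ for an $\tu{ad}\,P$-valued self-dual $2$-form $u$ and applies the Weitzenb\"ock identity $d^+d^*=\nabla^*\nabla + F^+$; since $\|F^+_{\A'_\epsilon}\|_{C^{0,\alpha}_{-2}}=O(\epsilon^2|\ln\epsilon|^2)$, the linear problem reduces to the scalar Bochner Laplacian acting componentwise, which is easier to control. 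Second, the uniform linear estimate is \emph{not} obtained by patching local parametrices as you propose, but by a contradiction/blow-up argument: assuming the bound fails along $\epsilon_k\to 0$, one uses the Poincar\'e-type estimate on the off-diagonal and oscillatory components to extract a limit on $\R^3\setminus\{p^i_\mu\}$ (a decaying $\Lie{h}$-valued harmonic function with tame singularities, hence zero) and, after rescaling near a puncture, a limit in the kernel of the Bochner Laplacian of a single fundamental caloron (again zero). Your parametrix-patching route is viable but would require explicit handling of the $4$-dimensional approximate kernel at each insertion point; the paper's compactness argument sidesteps this entirely.
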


Our construction of calorons is reminiscent of the description of ``widely separated'' monopoles on $\R^3$ \cite{Jaffe:Taubes,Taubes:G:Monopoles}. The interpretation of calorons in terms of constituent monopoles is not new, but a direct description in terms of the connection and for arbitrary structure group has not appeared before in the literature. In the late 1990s, implementing explicitly the Nahm Transform for calorons, Kraan--van Baal \cite{Kraan_1998} and Lee--Lu \cite{Lee_1998} independently produced an explicit family of $SU(2)$ calorons with non-trivial holonomy, instanton number 1 and vanishing total magnetic charge. These calorons are qualitatively interpreted as a superposition of a monopole and an anti-monopole. Conjectural decriptions of calorons with higher rank structure group in terms of constituent monopoles were then discussed in \cite{Kraan:1998sn,Kraan:1998gh} for $G=SU(n)$ and \cite{Lee2_1998} for general $G$. This idea and its relation with the collapsing behaviour of instantons does not appear to have been explored further and the purpose of this paper is to provide a simple but rigorous gluing construction implementing it.

More generally, besides early references such as \cite{HS:1978} that constructs explicit calorons with trivial holonomy, much of the work on calorons makes use of the Nahm Transform for $G=SU(n)$ calorons \cite{CH_2007,Takayama_2016} rather than working with the connection $\A$ directly. Some explicit solutions have been obtained using the Nahm Transform to construct multicalorons in \cite{Bruckmann:2002}, and symmetric configurations in \cite{Ward:2003sx,Harland_2007,Cork:2017,Kato:2020}. For an overview of the literature on calorons, including examples with trivial holonomy see \cite{Cork:2017}. Given the Nahm Transform only applies to classical structure groups, we use some of the tools and ideas of our gluing construction to also answer some basic open questions about the moduli space of calorons for arbitrary structure group. In particular, we calculate the expected dimension of the moduli space in Theorem \ref{thm:Dimension}.

\begin{theorem*} In the notation of the previous theorem,
\[
\tu{dim}\, \mathcal{M}(\omega,\gamma_\tu{m},n_0) = 4(n_0 + \dots + n_{\tu{rk}}).
\]	
\end{theorem*}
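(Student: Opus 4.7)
The plan is to reduce the dimension computation to an index calculation for the deformation complex of the anti-self-duality equation, and to use the gluing picture of Theorem~\ref{thm:Existence} to evaluate that index by an excision and additivity argument.

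I would first set up the linearised deformation theory at a caloron $\A \in \mathcal{M}(\omega,\gamma_\tu{m},n_0)$. Working in weighted Sobolev spaces on $\R^{3}\times S^{1}$ whose weights encode the asymptotic holonomy $\omega$ and the magnetic charge $\gamma_\tu{m}$, the Atiyah--Hitchin--Singer complex
\[
\Omega^{0}(\ad\,\A) \xrightarrow{\; d_\A \;} \Omega^{1}(\ad\,\A) \xrightarrow{\; d_\A^{+} \;} \Omega^{+}(\ad\,\A)
\]
becomes Fredholm, and the expected dimension of $\mathcal{M}(\omega,\gamma_\tu{m},n_0)$ is the index $\dim H^{1}-\dim H^{0}-\dim H^{2}$ of the rolled-up operator $d_\A^{*}\oplus d_\A^{+}$. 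The genericity of $\omega$ kills $H^{0}$, since maximal symmetry breaking at infinity leaves no non-trivial covariantly constant sections with the prescribed asymptotic decay; and a Weitzenb\"ock argument for anti-self-dual connections, combined with the absence of $L^{2}$ harmonic self-dual forms on the abelian ends determined by $(\omega,\gamma_\tu{m})$, forces $H^{2}=0$. Hence the actual dimension coincides with the index, and $\mathcal{M}(\omega,\gamma_\tu{m},n_0)$ is smooth of that dimension near such $\A$.

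To compute the index I would invoke an excision principle matching the gluing construction underlying Theorem~\ref{thm:Existence}. The approximate solution to which any caloron in $\mathcal{M}(\omega,\gamma_\tu{m},n_0)$ is $C^{\infty}_{\tu{loc}}$-close is a superposition of $n_\mu$ fundamental calorons of each type $\mu=0,\ldots,\tu{rk}$ (charge-one monopoles embedded along the simple coroots $\alpha_\mu^{\vee}$ for $\mu\geq 1$ and rotated monopoles along $\alpha_0^{\vee}$) patched into the Dirac-type $S^{1}$-invariant abelian background prescribed by $(\omega,\gamma_\tu{m})$. The index is additive under this decomposition: each fundamental caloron contributes the familiar index $4$ from the three translations plus the phase of a charge-one monopole, while the reducible abelian background, together with the cut-and-paste error terms supported in the gluing annuli, contributes zero once the boundary data encoded by $\omega$ and $\gamma_\tu{m}$ are taken into account. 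Summing $4$ over the $n_0+n_1+\cdots+n_\tu{rk}$ constituents yields the claimed total dimension.

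The main obstacle is the index calculation itself: one must justify Fredholmness on the correct weighted spaces, identify the weights compatible with the prescribed holonomy and magnetic charges, and rule out spurious kernel or cokernel contributions from the ``boundary'' Dirac operator on each asymptotic region, \ie verify that the Atiyah--Patodi--Singer-type boundary corrections balance against the topological input $(\omega,\gamma_\tu{m},n_0)$. The excision step also requires careful bookkeeping of how the Coulomb-gauge Laplacian behaves across the neck regions, particularly near the rotated monopole where the $S^{1}$-action on the approximate solution is non-trivial; once these analytic inputs are in place, the count reduces to the well-known monopole dimension formula applied constituent-by-constituent.
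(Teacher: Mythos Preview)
Your overall strategy coincides with the paper's: reduce the dimension to the index of $D_\A = d_\A^* \oplus d_\A^+$, use surjectivity (the paper's Proposition~\ref{prop:Caloron:Fredholm:Dirac}) to identify index with kernel dimension, then compute via excision against the fundamental calorons. The paper implements excision (Theorem~\ref{thm:Dimension}) by constructing, for small $\epsilon$, cut-off-and-project maps between $\ker D_\A$ and $\bigoplus_{\mu,i}\ker D_{\A^i_\mu}$ and proving both directions are injective, using the concentration estimate of Proposition~\ref{prop:Approximate:Dirac:Kernel}; there is no separate ``abelian background contribution'' to bookkeep.

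The genuine gap in your outline is the claim that each fundamental caloron contributes index $4$. Your justification (``three translations plus the phase of a charge-one monopole'') only accounts for deformations of $\A_\mu(\omega)$ as a $T'\times SU(2)$--caloron. Since $\A_\mu(\omega)$ is reducible, the deformation operator as a $G$--caloron also acts on $1$-forms valued in the orthogonal complement $\Lie{p}_\mu$ of $(\rho_\mu)_*(\Lie{h}'\oplus\Lie{su}_2)$ in $\Lie{g}$, and one must show there are no deformations in these transverse directions. This is not formal: the paper devotes Proposition~\ref{prop:Fundamental:Caloron:Dimension:Transverse} to it, applying the ALF index theorem of Cherkis--Larra\'in-Hubach--Stern to $D_\A$ twisted by the bundle with fibre $\Lie{p}_\mu^\C$, and a root-system computation involving the Dynkin index of the adjoint representation and the half-sum of positive roots shows the resulting index vanishes. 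Without this step the specific choice of the $\tu{rk}+1$ ``fundamental'' embeddings along the simple coroots and the lowest negative coroot is unmotivated---for other $\Lie{su}_2$--embeddings the transverse index is generally nonzero---and your count of $4$ per constituent is not justified. The boundary/APS issues you flag do arise, but at this stage of the argument rather than in the global excision.
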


This uses the index theorem for Dirac operators on ALF manifolds \cite{Nye:Singer,Cherkis_2021,IndexFibredBoundary} and an excision argument based on our gluing construction. (The index theorem does not immediately apply to the deformation theory of calorons because the adjoint action of the holonomy at infinity is always trivial on the Cartan subalgebra.) This index computation shows that the calorons we construct depend on the right number of parameters (positions and phases of the constituent monopoles) and that the fundamental calorons are precisely the ones that belong to a 4-dimensional moduli space. Given moduli spaces of calorons are hyperk\"ahler manifolds, this is the smallest non-trivial number of parameters gauge equivalence classes of calorons can depend on.

Calorons are the simplest examples of instantons on ALF spaces, recently studied by Cherkis--Larra\'in-Hubach--Stern \cite{Cherkis_2021,Cherkis:Stern:II}. We expect similar results to the ones described here to hold in this more general setting. Moreover, we hope that the behaviour described here can be used to model codimension-4 curvature concentration of generalised instantons on sequences of higher dimensional manifolds with special holonomy undergoing codimension-1 collapse.

Our gluing construction could also be used to provide a description of an asymptotic region of the moduli space of calorons and of its asymptotic hyperk\"ahler geometry. While the metric is in general incomplete due to instanton bubbling, moduli spaces of calorons are expected to provide interesting examples of non-compact hyperk\"ahler spaces. For example, for $G$ simply-laced these spaces also arise as moduli spaces of vacua in quantum field theory (more precisely, Coulomb branches of certain 3-dimensional supersymmetric quiver gauge theories \cite{Braverman:2016,Nakajima:Takayama,Lee:2000hp}).

\subsubsection*{Plan of the paper} In Section~\ref{sec:bc_and_top_invariants} we fix the notations and conventions that we use and explain the asymptotics and topological invariants of a caloron. Section~\ref{sec:fundamental calorons} gives the definition of fundamental calorons. We show how to construct approximate calorons by gluing together Dirac monopoles and fundamental calorons in Section~\ref{sec:approximate solutions}. Section~\ref{sec:Analysis} provides all of the linear analysis results (in weighted H\"older spaces) that we need to study and deform our approximate calorons to an exact solutions. The proof of the main existence theorem is completed in Section~\ref{sec:existence} and the dimension formula is given in Section~\ref{sec:Index Computations}.

\subsubsection*{Acknowledgements} This work was supported by the Engineering and Physical Sciences Research Council [grant number EP/V047698/1]. The work of LF is also supported by a Royal Society University Research Fellowship. CR would like to thank Josh Cork for several useful discussions about $SU(n)$ calorons and the role of the rotation map. LF thanks Michael Singer for discussions in early stages of this work.

\subsubsection*{Declarations}
The authors have no competing interests to declare that are relevant to the content of this article.

\section{Boundary conditions and topological invariants}
\label{sec:bc_and_top_invariants}

In this brief preliminary section we fix the notation and conventions that will be used throughout the paper.

\subsubsection*{The base manifold} Fix coordinates $(x,t)$ on $\R^3\times \R$ and identify $\R^3\times S^1$ with $\R^3\times \R/2\pi\Z$. Fix $\epsilon>0$ and endow $\R^3\times S^1$ with the flat metric $g_\epsilon = g_{\R^3} + \epsilon^2 dt^2$ and volume form $\dvol_{g_\epsilon} = \epsilon\, dt\wedge\dvol_{\R^3}$. A \emph{caloron} is a connection $\A$ on a principal bundle over $\R^3\times S^1$ with anti-self-dual curvature with respect to $(g_\epsilon, \dvol_{g_\epsilon})$. In this paper we study calorons in the limit $\epsilon\ra 0$.

\subsubsection*{The structure group} Let $G$ be a compact semi-simple Lie group. Any principal $G$--bundle $P\ra \R^3\times S^1$ is trivial and therefore without loss of generality we assume that $G$ is simply connected.

We now collect some of the Lie theoretic notions we will need. Denote by $\Lie{g}$ the Lie algebra of $G$ and let $\langle \, \cdot\, ,\,\cdot\,\rangle_{\Lie{g}}$ be the Killing form of $\Lie{g}$ normalised so that long coroots have norm $\sqrt{2}$ (with the convention that all coroots are long if $\Lie{g}$ is simply laced).

Fix a maximal torus $T$ in $G$ with Lie algebra $\Lie{h}$, a Cartan subalgebra of $\Lie{g}$. Since $G$ is simply connected $T=\Lie{h}/\Lambda$, where $\Lambda$ is the coroot lattice of $\Lie{g}$. We also fix a choice of simple roots $\alpha_1,\dots,\alpha_{\rm rk}$ and corresponding coroots $\alpha_1^\vee,\dots, \alpha_{\rm rk}^\vee$. Here ${\rm rk}$ is the rank of $G$. We introduce the lowest root $\alpha_0$ and the corresponding coroot $\alpha_0^\vee$. We have
\begin{equation}\label{eq:Lowest:Coroot}
\alpha_0^\vee = -\sum_{\mu=1}^{\rm rk}{m_\mu\,\alpha_\mu^\vee}
\end{equation} 
for integers $m_1,\dots, m_{\rm rk}\in \Z_{>0}$ (sometime referred to as the dual Coxeter labels of $\Lie{g}$).

Given these data, we let $A^+$ denote the fundamental alcove, the simplex in $\Lie{h}$ defined by the inequalities
\begin{equation}\label{eq:Alcove}
\alpha_{\mu}(\xi) \geq 0,\quad  \mu=1,\dots, {\rm rk}\,\qquad \alpha_0(\xi)\geq -1.
\end{equation}
The fundamental alcove is the fundamental domain for the action of $W\ltimes \Lambda$ on $\Lie{h}$, where $W$ is the Weyl group of $\Lie{g}$. Note that the coroots $\alpha_0^\vee,\dots, \alpha_{\rm rk}^\vee$ are inward-pointing normals to the facets of the boundary of $A^+$, such as shown in Figure~\ref{fig:Fundamental Alcove} for the case of $G=SU(3)$. 

Finally, recall that the extended Cartan matrix $\widetilde{C}$ is defined by $\widetilde{C}_{\mu\nu}=\alpha_\nu (\alpha_\mu^\vee)$, $\nu,\mu=0,1,\dots, {\rm rk}$. It satisfies $\widetilde{C}_{\mu \mu}=2$ and $\widetilde{C}_{\mu\nu}\leq 0$ for $\mu\neq \nu$.

\subsubsection*{Boundary conditions}

In the following we will consider connections $\A$ on the trivial principal $G$--bundle $P=P_G\ra \R^3\times S^1$ asymptotic to the $S^1$--invariant abelian calorons we now define.
 
The complement of a compact set in $\R^3\times S^1$ retracts to $S^2\times S^1$, so principal $T$--bundles on such an exterior domain are in one-to-one correspondence with elements $\gamma_\tu{m}\in \Lambda$, \ie any such bundle $H^{\gamma_\tu{m}}$ must be the pull-back from $S^2$ of the $T$--bundle associated with the Hopf circle bundle $S^3\ra S^2$ and the group homomorphism $\exp {\gamma_\tu{m}}\co S^1\ra T$. We will refer to $\gamma_\tu{m}$ as the \emph{total magnetic charge}.

The bundle $H^{\gamma_\tu{m}}$ carries a distinguished connection $A^{\gamma_\tu{m}}$ with curvature $dA^{\gamma_\tu{m}} = \tfrac{1}{2}\gamma_{\tu{m}}\, \dvol_{S^2}$. Given the additional choice of $\omega\in \Lie{h}$ we consider the $S^1$--invariant instanton on $\left(\R^{3}\setminus\{0\}\right)\times S^{1}$ 
\begin{equation}\label{eq:Model:Infinity}
\A_\infty(\omega,\gamma_\tu{m}) = A^{\gamma_\tu{m}} + \epsilon\left( \epsilon^{-1}\omega + \Phi^{\gamma_\tu{m}}\right) dt,\qquad \Phi^{\gamma_\tu{m}} = -\tfrac{1}{2|x|}\gamma_\tu{m}.
\end{equation} 
The fact that $\A_\infty(\omega,\gamma_\tu{m})$ is an instanton on $(\R^3\setminus\{ 0\})\times S^1$ follows from the fact that $(A^{\gamma_\tu{m}},\Phi^{\gamma_\tu{m}})$ satisfies the Bogomolny equation $dA^{\gamma_m} = \ast_{\R^3} d\Phi^{\gamma_\tu{m}}$ on $\R^3\setminus \{ 0\}$.

Note that the parameter $\omega$ can be shifted by an arbitrary element $\xi\in\Lambda$ by a gauge transformation of the form $(x,t)\mapsto \exp (2\pi t\,\xi)$. Furthermore, if we regard $\A_\infty(\omega,\gamma_\tu{m})$ as a connection on the $G$--bundle $H^{\gamma_\tu{m}}\times_T G$, the action of constant gauge transformations in the normaliser $N(T)$ of $T$ in $G$ generate the action of the Weyl group $W$ on the pair $(\omega,\gamma_\tu{m})\in \Lie{h}\times\Lie{h}$. Using these degrees of freedom, we can therefore always move $\omega$ to lie in the fundamental alcove. In this paper we make the standing assumption that $\omega$ lies in the interior $\mathring{A}^+$ of the fundamental alcove. In particular, the limiting holonomy of $\A_\infty(\omega,\gamma_\tu{m})$ on circles $\{ x\}\times S^1$ for $|x|\ra\infty$ commutes only with elements in $T\subset G$, \ie we have \emph{maximal symmetry breaking} at infinity. We will refer to $\omega$ as the \emph{holonomy parameter}.

\begin{example}\label{eg:SU(n):Lie:theory}
	The reader might find it useful to keep in mind the explicit case where $G=SU(n)$. Then
	\[
	\omega = \tu{diag}(i\mu_1,\dots, i\mu_{n}), \qquad \gamma_\tu{m} = \tu{diag}(ik_1,\dots,ik_{n}),
	\] 
	with $\mu_i\in \R$ and $k_i\in \Z$ satisfying $\mu_1+\dots+\mu_{n}=0=k_1+\dots+k_{n}$. The condition $\omega\in \mathring{A}^+$ is
	\[
	\mu_1 > \mu_2 > \dots > \mu_{n} > \mu_1 -1.
	\] 
	In particular, for $n=2$, \ie $G=SU(2)$, the holonomy parameter is a single number $\mu_1=-\mu_2\in (0,\tfrac{1}{2})$ and the total magnetic charge is a single integer $k_1=-k_2$.
\end{example}   

\begin{figure}[htbp]
\begin{center}
\begin{tikzpicture}[thick,scale=1.5]
\draw (0,0) -- (4,0) node[pos=.5, below] {$\{\alpha_{1}=0\}$};
\draw (0,0) -- (2,3.46) node[pos=.5, sloped, above]  {$\{\alpha_{2}=0\}$};
\draw  (2,3.46) --(4,0) node[pos=.5, sloped, above]  {$\{\alpha_{0}=-1\}$};
\draw[->] (2,0) -- (2,1) node[pos=.5, right] {$\alpha_{1}^{\vee}$};
\draw[->] (1,1.73) -- (1.87,1.23) node[pos=.5, sloped, below] {$\alpha_{2}^{\vee}$};
\draw[->] (3,1.73) -- (2.13,1.23) node[pos=.5, sloped, below] {$\alpha_{0}^{\vee}$};
\end{tikzpicture}
\caption{The fundamental alcove $A^{+}$ for $SU(3)$.}
\label{fig:Fundamental Alcove}
\end{center}
\end{figure}

\subsubsection*{Instanton number}

Fix $\epsilon>0$ and $(\omega,\gamma_\tu{m})\in \mathring{A}^+\times\Lambda$ and consider a pair $(\A,f)$ consisting of a connection $\A$ on the trivial principal $G$--bundle $P\ra \R^3\times S^1$ and a \emph{framing} $f$ that identifies $P$ and $H^{\gamma_\tu{m}}\times_T G$ on $(\R^3\setminus B_R)\times S^1$ for some $R\gg 1$ and such that
\begin{subequations}\label{eq:Framing}
\begin{equation}
f^\ast \A = \A_\infty(\omega,\gamma_\tu{m}) + a,
\end{equation}
where 
\begin{equation}
|\nabla_{\A_\infty}^k a|=O(r^{-1-k+\nu})
\end{equation}
for some $\nu<0$ and all $k\geq 0$.
\end{subequations}

\begin{remark*}
By \cite[Theorem B]{Cherkis_2021} the much weaker asymptotic conditions of finite Yang--Mills energy and maximal symmetry breaking at infinity along a single ray in $\R^3$ force any caloron to satisfy the asymptotic conditions \eqref{eq:Framing} with $\nu=-2$. 	
\end{remark*}

To any such pair $(\A,f)$ we associate a topological number $n_0\in \N_0$ in either of the following equivalent ways, \cf \cite[\S 2]{Nye:Singer} and \cite[Chapter 2]{Nye_thesis}. Firstly, we can represent $(P,\A)$ as a pair $(\widetilde{P},\widetilde{\A})$ on $\R^3\times \R$ invariant under the action of $\Z$ generated by translations on $\R$ and an an isomorphism $\widetilde{P}|_{\{t=2\pi\}}\ra \widetilde{P}|_{\{t=0\}}$, \ie a smooth map $h\co \R^3\ra G$. We can choose this trivialisation in a compatible way with the framing $f$. Then $h$ is the identity outside a compact set and it extends to a map $h\co S^3=\R^3\cup\{ \infty\}\ra G$, whose degree we denote by $n_0$. Here by degree we mean the pull back of the generator of $H^3(G;\Z)$ in $H^3(S^3;\Z)\simeq \Z$. Alternatively, composing $f$ with a fixed trivialisation of $H^{\gamma_\tu{m}}\times_T G$ on $(\R^3\setminus B_R)\times S^1$ allows one to construct a new connection $\A'$ that is trivial outside a compact set. Then the closed form $\langle F_{\A'}\wedge F_{\A'}\rangle_\Lie{g}$ defines a compactly supported cohomology class, which we identify with an integer $n_0$ by integration. The integer $n_0$ so defined will be called the \emph{instanton number} of $(\A,f)$.

We consider the set $\mathcal{A}_\epsilon(\omega,\gamma_\tu{m},n_0)$ of pairs $(\A,f)$ satisfying the boundary conditions \eqref{eq:Framing} and with instanton number $n_0$. The space of framed connections is acted upon by the group $\mathcal{G}$ of gauge transformations that are asymptotic to the identity at infinity (with suitable polynomial decay). The quotient 
\[
\mathcal{M}_\epsilon (\omega,\gamma_\tu{m},n_0) = \{ (\A,f)\in \mathcal{A}_\epsilon(\omega,\gamma_\tu{m},n_0)\, |\, \ast F_\A  = -F_\A\}/\mathcal{G} 
\]     
is the moduli space of (framed) calorons.

Using \eqref{eq:Lowest:Coroot}, define integers $n_1,\dots, n_{\rm rk}$ by
\begin{equation}\label{eq:Monopole:Charges} 
\gamma_\tu{m} = \sum_{\mu=0}^{\rm rk}{n_\mu\,\alpha_\mu^\vee} = (n_1-n_0\, m_1)\,\alpha_1^\vee + \dots + (n_{\rm rk}-n_0\, m_{\rm rk})\,\alpha_{\rm rk}^\vee.
\end{equation}
The purpose of this paper is to interpret the integers $(n_0,n_1,\dots, n_{\rm rk})$ as the number of ``constituent monopoles'' of a caloron in $\mathcal{M}_\epsilon$.

\begin{remark*}
The Yang--Mills energy of a caloron is given by (\cf \cite[\S 2.1.7]{Nye_thesis} for $G$ a unitary group)
\begin{equation}
\mathcal{YM}(\A)=\tfrac{1}{8\pi^{2}}\Vert F_{\A}\Vert^{2}_{L^{2}}=n_{0}\left(1+ \alpha_{0}\left(\omega\right)\right)+\sum_{\mu=1}^{\rm rk}\tfrac{1}{2}\Vert\alpha_{\mu}^{\vee}\Vert_{\mathfrak{g}}^{2}\, n_{\mu}\,\alpha_{\mu}\left(\omega\right).\label{eq:caloron energy expression}
\end{equation}
\end{remark*}

\section{The Fundamental Calorons}
\label{sec:fundamental calorons}

In this section we introduce the simple model solutions that will be used as building blocks in the construction of more complicated calorons. These ``fundamental'' calorons are all obtained from the simplest non-abelian solution of the Bogomolny equation on $\R^3$, the charge 1 BPS (Bogomolny--Prasad--Sommerfield) $SU(2)$ monopole. The fundamental calorons correspond to BPS monopoles embedded along the simple coroots of the structure group $G$, and a ``rotated'' BPS monopole embedded along the lowest negative root. Here a ``rotated'' BPS monopole is a caloron obtained by acting on the BPS monopole by a $t$--dependent large gauge transformation which generates the \emph{rotation map} of \cite[\S 2.2]{Nye_thesis}. In other words, the ``rotated'' BPS monopole is the caloron corresponding to a non-trivial loop in the moduli space of (framed) charge 1 monopoles.

\subsection{Fundamental \texorpdfstring{$SU(2)$}{SU(2)} calorons}
The simplest case to consider is that of $SU(2)$ calorons, where there are just two types of fundamental calorons. As in Example \ref{eg:SU(n):Lie:theory}, $SU(2)$ calorons are classified by their magnetic charge $k$ and instanton number $n_0$, as well as the holonomy parameter $\omega\in \left(0,\frac{1}{2}\right)$. The two fundamental calorons are the $(k,n_0)=(1,0)$ BPS monopole and the $(k,n_0)=(-1,1)$ ``rotated'' BPS monopole. We begin the section with these fundamental calorons and then describe how to obtain fundamental calorons for higher rank Lie group via embeddings of $\Lie{su}_2$ in $\Lie{g}$. 

\subsubsection{The charge 1 BPS monopole}
For $SU(2)$ the charge $1$ BPS monopole with mass $v>0$ is the explicit solution $(A_{\rm BPS},\Phi_{\rm BPS})$ of the Bogomolny equation $F_{A}=\ast_{\R^3}d_{A}\Phi$ given by
\begin{equation}
\Phi_{\rm BPS}=\left(v\coth\left(2vr\right)-\frac{1}{2r}\right)\hat{x}\cdot i\vec{\tau}\qquad A_{{\rm BPS},i}=-\frac{1}{2}\left(1-\frac{2vr}{\sinh\left(2vr\right)}\right)\varepsilon_{ija}\frac{\hat{x}^{j}i\tau^{a}}{r},\label{eq:BPS monopole formula}
\end{equation}
with $r=|x|$ the radial distance from the origin in $\R^3$, $\hat{x}=r^{-1}x$ and $\vec{\tau}$ the vector of Pauli matrices.

If $\omega\in (0,\tfrac{1}{2})$ we set $v=\epsilon^{-1}\omega$ in \eqref{eq:BPS monopole formula} and obtain an $S^1$--invariant caloron $\A_{\rm BPS}^+$ on $\R^3\times S^1$ by 
\begin{equation}
\A_{\text{ BPS}}^{+}=A_{\text{BPS}}+\epsilon\, \Phi_{\text{BPS}} dt.\label{eq:Bps monopole as a caloron}
\end{equation}


This connection is put in an asymptotically abelian gauge by the bundle map $f_{\rm BPS}^+\co H\times_{S^1}SU(2)\ra P=S^{3}/S^{1} \times SU(2)$, where $H$ is the Hopf circle bundle and $f_{\rm BPS}^{+}[\left(p,g\right)]=\left([p],pg\right)$ for $p\in S^{3}$ and $g\in SU(2)$.
Following \cite[\S IV.7]{Jaffe:Taubes} and working in the local coordinates of the standard trivialisation of $H$ over the north hemisphere, $f_{\rm BPS}^+$ is given by 
\[
f_{\rm BPS}^+=\cos(\tfrac{1}{2}\theta)\,\tu{id}-i\sin (\tfrac{1}{2}\theta)\,\vec{\varpi}\cdot\vec{\tau}\qquad \text{with } \vec{\varpi}=\begin{pmatrix}
-\sin\phi\\
\cos\phi\\
0
\end{pmatrix}
\]
An analogous formula holds in the standard trivialisation of $H$ over the southern hemisphere.
\begin{remark*}
In terms of the associated vector bundles,
\begin{equation}
f^{+}_{\rm BPS}:L\oplus L^{-1}\to E\vert_{\R^{3}\backslash \{0\}}=\left(\R^{3}\backslash \{0\}\right)\times\C^{2}
\end{equation}
relates the BPS monopole on the trivial bundle $\left(\R^{3}\backslash \{0\}\right)\times\C^{2} $ away from the origin to the Dirac monopole on $S^{2}$ (with a Dirac string going through the south pole in the local trivialisation above). Here $L=\mathcal{O}(1)$ is the standard complex line bundle on $S^{2}$ and $L$ and $L^{-1}$ are the eigenbundles of the asymptotic Higgs field.
\end{remark*}

The role of $f^{+}_{\rm BPS}$ is made precise in the following proposition.
\begin{prop}\label{prop:BPS asymptotics}
Given $\omega\in (0,\tfrac{1}{2})$ there exists $r_\infty(\epsilon)\propto \epsilon$ such that outside of $B_{r_\infty(\epsilon)}(0)\times S^1$  the pair $(\A^+_{\rm BPS},f_{\text{BPS}}^{+})$ satisfies
\[
\left(f^{+}_{{\rm BPS}}\right)^{*}\A_{\rm BPS}^{+}=\A_\infty (\omega,1)+a_{{\rm BPS}}^+,
\]
with $r^k \vert \nabla_{\A_\infty}^k a_{\text{BPS}}^+\vert \leq C \epsilon^{-1} e^{-c\,\epsilon^{-1} r}$ for all $k\geq 0$ and $\epsilon$--independent constants $C,c>0$.
\proof
This is just the statement that a non-abelian monopole has the asymptotics of a Dirac monopole. For example,
\[
\left( f_{\rm BPS}^+\right)^{-1}\Phi_{\text{BPS}}\, f_{\rm BPS}^+=\left(v-\frac{1}{2r} + O\left(vr^{-4vr}\right) \right)i\tau^{3}.
\]
\endproof
\end{prop}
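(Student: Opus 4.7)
The plan is to apply the gauge transformation $f^+_{\tu{BPS}}$ to $\A^+_{\tu{BPS}}$ explicitly, split the result into its $dt$-component and spatial components, identify each leading term with the corresponding piece of $\A_\infty(\omega,1)$, and show the remainder decays exponentially. The essential geometric fact I would exploit is that $f^+_{\tu{BPS}}$ is precisely the rotation intertwining the hedgehog direction $\hat{x}\cdot i\vec{\tau}$ with the fixed direction $i\tau^3$, so conjugation by it produces abelian leading terms.

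For the $dt$-component, which is what the proof sketched in the proposition addresses, conjugation by $f^+_{\tu{BPS}}$ turns $\Phi_{\tu{BPS}}$ into $\bigl(v\coth(2vr) - \tfrac{1}{2r}\bigr) i\tau^3$. Writing $\coth(2vr) = 1 + 2e^{-4vr}/(1-e^{-4vr})$ isolates the leading term $\bigl(v - \tfrac{1}{2r}\bigr) i\tau^3$, which after substituting $v=\epsilon^{-1}\omega$ and identifying the coroot with $i\tau^3$ matches the Higgs field appearing in $\A_\infty(\omega,1)$ exactly; the remainder has pointwise size $O(v e^{-2vr})$. Multiplied by $\epsilon\, dt$ and measured with $g_\epsilon$, which gives $|dt|_{g_\epsilon}=\epsilon^{-1}$, this yields a bound of order $v e^{-2vr} = \epsilon^{-1}\omega\, e^{-2\epsilon^{-1}\omega r}$, matching the claimed $\epsilon^{-1}$ prefactor.

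For the spatial components, I would decompose $A_{\tu{BPS}} = A_{\tu{WY}} + \delta A$, with $A_{\tu{WY},i} = -\tfrac{1}{2r}\varepsilon_{ija}\hat{x}^j i\tau^a$ the Wu--Yang hedgehog connection, so that $\delta A$ carries the exponentially small coefficient $\tfrac{2vr}{\sinh(2vr)}$. The classical Wu--Yang / Dirac gauge equivalence, a direct computation in spherical coordinates from the explicit formula for $f^+_{\tu{BPS}}$, gives
\[
(f^+_{\tu{BPS}})^{-1} A_{\tu{WY}}\, f^+_{\tu{BPS}} + (f^+_{\tu{BPS}})^{-1} d f^+_{\tu{BPS}} = A^1
\]
in the northern hemisphere trivialisation of $H$, with the analogous statement in the southern. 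The leftover term $(f^+_{\tu{BPS}})^{-1}\delta A\, f^+_{\tu{BPS}}$ has pointwise size $O(v e^{-2vr})$, producing the spatial part of $a^+_{\tu{BPS}}$ within the same bound.

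For the derivative estimates, each spatial derivative applied to a remainder of the schematic form $v e^{-2vr}$ produces at worst an additional factor of $v$ (from the exponential) or $1/r$ (from polynomial factors in the explicit expressions); weighting by $r^k$ converts $v^k$ into $(vr)^k$, and $(vr)^k e^{-2vr}$ is absorbed into $C_k e^{-c vr}$ for any $c<2$. The $\partial_t$ derivatives vanish by $S^1$-invariance of $\A^+_{\tu{BPS}}$. Choosing $r_\infty(\epsilon) = C_0 \epsilon$ with $C_0>0$ large enough ensures $vr = \omega r/\epsilon \geq \omega C_0$ on the domain, so the exponential estimates are uniform in $\epsilon$. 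The main obstacle I anticipate is the careful bookkeeping of powers of $\epsilon$ (distinguishing the geometric prefactor from the exponential scale) and the patching of the northern and southern hemisphere trivialisations of $H$; the mathematical substance reduces to the standard Wu--Yang / Dirac equivalence together with the exponential tails of $\coth$ and $1/\sinh$.
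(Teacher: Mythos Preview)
Your proposal is correct and follows essentially the same route as the paper's one-line proof: both reduce the statement to the classical fact that, in the singular abelian gauge furnished by $f^+_{\tu{BPS}}$, the BPS monopole equals the charge-$1$ Dirac monopole up to exponentially small error. You simply spell out in more detail what the paper leaves implicit, namely the Wu--Yang/Dirac equivalence for the spatial part and the expansion of $\coth$ for the Higgs part.

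One small point to tighten: you write that ``the $\partial_t$ derivatives vanish by $S^1$--invariance of $\A^+_{\tu{BPS}}$'', but the estimate involves the \emph{covariant} derivative $\nabla_{\A_\infty}$. While $\partial_t a^+_{\tu{BPS}}=0$, the bracket term $[\A_\infty(\partial_t),\,\cdot\,]$ does not vanish on the off-diagonal spatial remainder $(f^+_{\tu{BPS}})^{-1}\delta A\, f^+_{\tu{BPS}}$. However, since $\A_\infty(\partial_t)=\omega+\epsilon\,\Phi^1$, the unit $t$-covariant derivative $\nabla_{e_t}=\epsilon^{-1}\nabla_{\partial_t}$ contributes a factor of order $\epsilon^{-1}\omega=v$, exactly the same size you already account for in the spatial derivatives, and is therefore absorbed into $(vr)^k e^{-2vr}\leq C_k e^{-cvr}$ by the same mechanism. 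With that adjustment the argument is complete.
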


In other words, the gauge equivalence class of the pair $(\A^+_{\rm BPS},f_{\text{BPS}}^{+})$ lies in the moduli space $\mathcal{M}^{SU(2)}_\epsilon(\omega, 1, 0)$ of $SU(2)$ calorons with holonomy parameter $\omega$, magnetic charge $1$ and vanishing instanton number. Here the instanton number vanishes since $\A^+_{\rm BPS}$ is $S^1$--invariant.

\begin{remark*}
	By pulling $\A^+_{\rm BPS}$ back by a translation in $\R^3$ and composing $f^+_{\rm BPS}$ with an automorphism of $(H,\A_\infty)$, \ie a constant phase, we obtain an exhaustive $4$-parameter family of inequivalent framed calorons in $\mathcal{M}^{SU(2)}_\epsilon(\omega, 1, 0)$.
\end{remark*}

\subsubsection{The rotation map}

The other fundamental $SU(2)$ caloron is the ``rotated'' BPS monopole.

Fix $\omega\in (0,\tfrac{1}{2})$ and consider the BPS monopole $(A_{\rm BPS},\Phi_{\rm BPS})$ \eqref{eq:BPS monopole formula} with mass $v = \epsilon^{-1}\left( \tfrac{1}{2}-\omega\right)>0$. Let $\hat{\Phi}\co \R^3\ra\Lie{su}_2$ be a smooth map satisfying
\[
\hat{\Phi}(x) = \frac{\Phi_{\rm BPS}(x)}{\vert \Phi_{\rm BPS}(x)\vert}
\]
outside a compact set. Consider the $t$--dependent family $g_t(x)=g(x,t)$ of ``large'' gauge transformations (\ie gauge transformations on $\R^3$ that do not converge to the identify at infinity)
\begin{equation}
g(x,t)=\exp\left(-\tfrac{1}{2} t\,\hat{\Phi}(x)\right):\R^{3}\times \R\to SU(2).\label{eq:rotation map large gauge transformation}
\end{equation}
For a map $q\co \R^3 \ra SU(2)$ let $P_{q}$ be the principal $SU(2)$--bundle on $\R^3\times S^1$ defined by $P_q=(\R^3\times \R\times SU(2))/\Z$, where the action of $\Z$ is generated by $(x,t,g)\mapsto (x, t+2\pi, q(x)g)$. We then regard $g$ in \eqref{eq:rotation map large gauge transformation}
 as a bundle morphism $g\co P_h\ra P_{-1}$, where $h(x):=-g(x,2\pi)^{-1}$. Since outside a compact set $\hat{\Phi}$ takes value in the adjoint orbit of $i\tau_3$, note that $h(x)=1$ in a neighbourhood of infinity. As shown in \cite[\S 2.2]{Nye_thesis}, the extension of $h$ as a map $h\co S^3\ra SU(2)$ has degree $1$.

Now, since the adjoint action of $-1$ is trivial, we regard $\A^+_\tu{BPS}=A_{\rm BPS} + \epsilon\, \Phi_{\rm BPS}\, dt$ as a connection on $P_{-1}$ and then define a caloron $\A^{-}_{\rm BPS}$ on $P_h$ by  
\begin{subequations}
\begin{equation}\label{eq:Rotated BPS monopole}
\A^{-}_{\rm BPS}=g^\ast \A^{+}_{\text{BPS}}.
\end{equation}
We can also define a framing for $\A^{-}_{\rm BPS}$ from the framing $f^+_{\rm BPS}$ for $\A^{+}_{\text{BPS}}$. The only subtlety is that we need to introduce the action of a constant gauge transformation such as $i\tau_2$, that acts on the Cartan subalgebra of $\Lie{su}_2$ as the non-trivial element of the Weyl group, to ensure that the holonomy parameter lies in the fundamental alcove. 
More precisely, let $H^{-1}$ be the inverse of the Hopf line bundle $H$, and let $H_{-1}$ denote the $S^{1}$ bundle defined analogously to $P_{-1}$, \ie it is the radial extension of the principal $S^1$--bundle on $S^2 \times S^1$ defined by $(S^3\times \R)/\Z$ with $\Z$--action generated by $(p,t)\mapsto (-p, t+2\pi)$.
Then introduce the bundle map
\[
g_\infty\co \co H^{-1}\times_{S^1}SU(2)\ra H_{-1}\times_{S^1} SU(2), \qquad g_\infty = \exp\left( -\tfrac{1}{2}it\tau_3\right)i\tau_2.
\]  
On the exterior domain where $\hat{\Phi}=|\Phi_{\rm BPS}|^{-1}\Phi_{\rm BPS}$ we then define the framing
\begin{equation}\label{eq:Framing:Rotated BPS monopole}
f_{\rm BPS}^- \co H^{-1}\times_{S^1}SU(2)\ra P_h, \qquad f^-_{\rm BPS} = g^{-1}\circ f^+_{\rm BPS} \circ g_\infty,	
\end{equation}
where we regard $f^+_{\rm BPS}$ as a bundle map $f_{\rm BPS}^+\co H_{-1}\times_{S^1}SU(2)\ra P_{-1}$.
\end{subequations}

The following proposition follows immediately from Proposition \ref{prop:BPS asymptotics} and summarises the main properties of $(\A^-_{\rm BPS},f^-_{\rm BPS})$.

\begin{prop}\label{prop:rotated BPS asymptotics}
Given $\omega\in (0,\tfrac{1}{2})$ there exists $r_\infty(\epsilon)\propto \epsilon$ such that outside of $B_{r_\infty(\epsilon)}(0)\times S^1$  the pair $(\A^-_{\rm BPS},f_{\text{BPS}}^{-})$ satisfies
\[
\left(f^{-}_{{\rm BPS}}\right)^{*}\A_{\rm BPS}^{-}=\A_\infty (\omega,-1)+a_{{\rm BPS}}^-,
\]
with $r^k \vert \nabla_{\A_\infty}^k a_{\text{BPS}}^-\vert \leq C \epsilon^{-1} e^{-c\,\epsilon^{-1} r}$ for all $k\geq 0$ and $\epsilon$--independent constants $C,c>0$.
\end{prop}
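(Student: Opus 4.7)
The plan is to reduce the statement to Proposition~\ref{prop:BPS asymptotics} applied with mass $v=\epsilon^{-1}\left(\tfrac{1}{2}-\omega\right)$, and then to verify that the Weyl element together with the $t$-dependent abelian factor packaged into $g_\infty$ converts the asymptotic model $\A_\infty(\tfrac{1}{2}-\omega,1)$ into $\A_\infty(\omega,-1)$. Unwinding the definitions $\A^-_{\rm BPS}=g^{\ast}\A^+_{\rm BPS}$ and $f^-_{\rm BPS}=g^{-1}\circ f^+_{\rm BPS}\circ g_\infty$, the two occurrences of $g$ cancel in the pullback, so that
\[
\left(f^-_{\rm BPS}\right)^{\ast}\A^-_{\rm BPS}=g_\infty^{\ast}\left(f^+_{\rm BPS}\right)^{\ast}\A^+_{\rm BPS}
\]
on the common exterior domain. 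Proposition~\ref{prop:BPS asymptotics} then gives $\left(f^+_{\rm BPS}\right)^{\ast}\A^+_{\rm BPS}=\A_\infty(\tfrac{1}{2}-\omega,1)+a^+_{\rm BPS}$ outside a ball of radius $r_\infty(\epsilon)\propto\epsilon$, with the decay constants depending only on the fixed positive quantity $\tfrac{1}{2}-\omega$.

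The heart of the argument is the explicit identity $g_\infty^{\ast}\A_\infty(\tfrac{1}{2}-\omega,1)=\A_\infty(\omega,-1)$. Decomposing $g_\infty=u\cdot w$ with $u=\exp(-\tfrac{1}{2}tH)$, $w=i\tau_2$ and $H=i\tau_3$, the constant factor $w$ realises the non-trivial Weyl reflection, $w^{-1}Hw=-H$, flipping both the magnetic charge and the Cartan part of any $\Lie{h}$-valued form. The abelian factor $u$ contributes $u^{-1}du=-\tfrac{1}{2}H\,dt$, which becomes $+\tfrac{1}{2}H\,dt$ after further conjugation by $w$. Adding this to the reflected $dt$-component $-\bigl((\tfrac{1}{2}-\omega)-\tfrac{\epsilon}{2r}\bigr)H\,dt$ of $\A_\infty(\tfrac{1}{2}-\omega,1)$ produces exactly $\bigl(\omega+\tfrac{\epsilon}{2r}\bigr)H\,dt$, the $dt$-component of $\A_\infty(\omega,-1)$; simultaneously the Dirac monopole potential $A^H$ transforms into $A^{-H}$, completing the identity.

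To conclude I would set $a^-_{\rm BPS}=\tu{Ad}_{g_\infty^{-1}}a^+_{\rm BPS}$, so that $(f^-_{\rm BPS})^{\ast}\A^-_{\rm BPS}=\A_\infty(\omega,-1)+a^-_{\rm BPS}$. Since conjugation by $g_\infty$ preserves the Killing norm pointwise and the derivatives of $g_\infty$ are uniformly bounded in $\epsilon$, the exponential decay of $a^+_{\rm BPS}$ from Proposition~\ref{prop:BPS asymptotics} transfers to $a^-_{\rm BPS}$ with the same constants. The main obstacle is the bookkeeping in the identity of the previous paragraph: one has to track how the $-\tfrac{1}{2}H\,dt$ shift from the abelian large gauge transformation combines with the sign flip from the Weyl reflection to turn the holonomy parameter from $\tfrac{1}{2}-\omega$ into $\omega$, all while ensuring the magnetic charge is simultaneously reversed from $+1$ to $-1$.
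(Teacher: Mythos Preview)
Your argument is correct and is exactly the content behind the paper's one-line justification that the proposition ``follows immediately from Proposition~\ref{prop:BPS asymptotics}'': you have simply unwound the definitions \eqref{eq:Rotated BPS monopole}--\eqref{eq:Framing:Rotated BPS monopole} to reduce to that proposition and checked the abelian identity $g_\infty^\ast\A_\infty(\tfrac{1}{2}-\omega,1)=\A_\infty(\omega,-1)$ explicitly. The only point worth noting is that since $g_\infty$ intertwines the two model connections, the covariant derivatives satisfy $\nabla^k_{\A_\infty(\omega,-1)}a^-_{\rm BPS}=\tu{Ad}_{g_\infty^{-1}}\nabla^k_{\A_\infty(\frac{1}{2}-\omega,1)}a^+_{\rm BPS}$ exactly, so the decay bounds transfer without any loss in the constants---your remark about bounded derivatives of $g_\infty$ is not even needed.
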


In other words, the gauge equivalence class of the pair $(\A^-_{\rm BPS},f_{\text{BPS}}^{-})$ lies in the moduli space $\mathcal{M}^{SU(2)}_\epsilon(\omega, -1, 1)$ of $SU(2)$ calorons with holonomy parameter $\omega$, magnetic charge $-1$ and instanton number $1$ (since we already observed that the clutching map $h$ has degree 1).

\begin{remark*}
Also in this case translations in $\R^3$ and composition of the framing with an automorphism of $(H^{-1},\A_\infty)$	yield an exhaustive 4-parameter family of calorons in $\mathcal{M}^{SU(2)}_\epsilon(\omega, -1, 1)$. Since $\A^-_{\rm BPS}$ is not $S^1$--invariant, the circle action on $\R^3\times S^1$ lifts to a circle action on $\mathcal{M}^{SU(2)}_\epsilon(\omega, -1, 1)$ which corresponds to changing the framing. 
\end{remark*}

\begin{remark*}
Since it has negative magnetic charge, $\A^-_{\rm BPS}$ is referred to as an anti-monopole in \cite{Kraan_1998, Lee_1998}. We find that referring to it as a ``rotated'' monopole is less misleading. 
\end{remark*}

\subsection{Higher rank groups}

For a simple Lie group of rank ${\rm rk}>1$ the fundamental calorons given above generalise and we have a BPS monopole for every simple root and a rotated BPS monopole for the lowest negative root.
These fundamental calorons are found by embedding the fundamental $SU(2)$ calorons into $G$ as $T'\times SU(2)$ calorons, for $T'$ a torus of rank ${\rm rk}-1$.

Let $\Lie{h}'$ denote the Lie algebra of $T'$. Recall that every positive root $\alpha$ of $\Lie{g}$ corresponds to a Lie algebra embedding $\rho\co \Lie{h}'\oplus\Lie{su}_2\ra \Lie{g}$ with $\rho(0,i\tau_3)=\alpha^\vee$ and $\rho (\Lie{h}'\oplus \{ 0\})=\ker\alpha$. By abuse of notation we identify $\rho$ with the induced group homomorphism $T'\times SU(2)\ra G$. We let $\rho_1,\dots, \rho_{\rm rk}$ denote the homomorphisms corresponding to the simple roots $\alpha_1,\dots,\alpha_{\rm rk}$ and let $\rho_0$ be the one corresponding to the highest root $-\alpha_0$.    

Now, fix $\omega\in \mathring{A}^+$. For each $\mu=0,1,\dots, {\rm rk}$ we decompose $\omega = \omega'_\mu + \tfrac{1}{2}\alpha_\mu (\omega)\,\alpha_\mu^\vee$ in the decomposition $\Lie{h}=\ker\alpha_\mu \oplus \R\, \alpha_\mu^\vee$. Note that the assumption $\omega\in \mathring{A}^+$ implies that $-\tfrac{1}{2}\alpha_0(\omega)$ and $\tfrac{1}{2}\alpha_\mu(\omega)$ for $\mu=1,\dots, {\rm rk}$ are real numbers lying in $(0,\frac{1}{2})$.

For $\mu=1,\dots, {\rm rk}$ we now consider the $SU(2)$ caloron $\A^+_{\rm BPS}$ with holonomy parameter $\tfrac{1}{2}\alpha_\mu (\omega)$ and then set
\begin{subequations}\label{eq:Fundamental Calorons}
\begin{equation}\label{eq:Fundamental Calorons Simple roots}
\A_\mu(\omega) = \rho_\mu \left( \A^+_{\rm BPS} + \omega'_\mu \, dt\right).	
\end{equation}
Similarly, we set
\begin{equation}\label{eq:Fundamental Calorons Lowest root}
\A_0(\omega) = \rho_0 \left( \A^-_{\rm BPS} + \omega'_0 \, dt\right) 	
\end{equation}
for $\A^-_{\rm BPS}$ the $SU(2)$ caloron with holonomy parameter $-\tfrac{1}{2}\alpha_0(\omega)$.
\end{subequations}
For each such caloron we also have a framing $f_\mu$ induced by $f^\pm_{\rm BPS}$.
\begin{remark*}For $G=SU(n)$ there is a large gauge transformation which gives an isomorphism between the moduli spaces 
\begin{equation}
\mathcal{M}^{SU(n)}_{\epsilon}\left(\bar{\omega},\alpha_{1}^{\vee},0\right)\leftrightarrow \mathcal{M}^{SU(n)}_{\epsilon}\left(\omega,\alpha_{0}^{\vee},1\right),
\end{equation}
where $\bar{\omega}\in \mathring{A}^{+}$ is a holonomy parameter related to $\omega$ through the large gauge transformation. For $G=SU(2)$ we saw above that $\bar{\omega}=\frac{1}{2}-\omega$. This large gauge transformation is also called the rotation map in the literature, \eg in \cite{Nye_thesis,Cork_thesis}. At the level of the extended Dynkin diagram this isomorphism is explicitly a rotation cycling the simple roots of the extended Dynkin diagram. Under the above isomorphism $\A_{0}\left(\omega\right)$ is the image of $\A_{1}\left(\bar{\omega}\right)$.
 \end{remark*}

\begin{remark*}
In an abelian gauge on $\left(\R^{3}\backslash\{0\}\right)\times S^{1}$ we can write
\begin{equation}
\A_{\mu}(\omega)=A_{\mu}+\epsilon\,\Phi_{\mu}dt=A_{\mu}+\epsilon\left(\epsilon^{-1}\omega_{\mu}' +\varphi\,\alpha_{\mu}^{\vee}\right)dt,
\end{equation}
where $\varphi = |\Phi_\tu{BPS}|$ for a BPS monopole of the appropriate mass. Since $\epsilon\varphi (x)\ra \frac{1}{2}\alpha_{\mu}\left(\omega\right)$ as $|x|\ra \infty$ and $\varphi (0)=0$, the Higgs field gives a map into the Cartan subalgebra $\epsilon\Phi_{\mu}:\R^{3}\setminus\{0\}\to \Lie{h}$ which parametrises a straight line from $\omega\in \mathring{A}^{+}$ (for large $x$) to the component of the boundary of the alcove $A^{+}$ with normal $\alpha_\mu^\vee$ (in the limit $x=0$), \cf 
Figure~\ref{fig:SU(3)_caloron} for the case $G=SU(3)$. Another way to say this is that as $\vert x\vert\to \infty$ the gauge group breaks to the maximal torus $T$, while near the origin there is a symmetry enhancement to $\rho_{\mu}\left(SU(2)\times T'\right)$. 
\end{remark*}

\begin{figure}[htbp]
\begin{center}
\begin{tikzpicture}[thick,scale=1.5]
\draw (0,0) -- (4,0) node[pos=.5, below] {$\{\alpha_{1}=0\}$};
\draw (0,0) -- (2,3.46) node[pos=.5, sloped, above]  {$\{\alpha_{2}=0\}$};
\draw  (2,3.46) --(4,0) node[pos=.5, sloped, above]  {$\{\alpha_{0}=-1\}$};
\draw[dashed] (1.5,0) -- (1.5,0.8) ;
\draw[dashed] (0.72,1.25) --  (1.5,0.8) ;
\draw[dashed] (3.03,1.68) --  (1.5,0.8) ;
\draw[fill] (1.5,0.8)  circle [radius =0.05cm] node[above]{$\omega$};
\end{tikzpicture}
\caption{Images of the Higgs fields $\epsilon \Phi_\mu\co \R^3\setminus\{ 0\}\ra \Lie{h}$ of the fundamental $SU(3)$ calorons.}
\label{fig:SU(3)_caloron}
\end{center}
\end{figure}

\begin{remark*}
The reason for these particular choices of $(\A^\pm_{\rm BPS},\rho_\mu)$, \ie why we do not take different combinations and more general embeddings of $\Lie{h}'\oplus\Lie{su}_2$ in $\Lie{g}$, is inspired by \cite{Lee2_1998}. It appears unmotivated at the moment, but we will see in Section~\ref{sec:Index Computations}	 that these are the only choices yielding 4-dimensional moduli spaces, hence justifying referring to the $\A_\mu$'s as ``fundamental'' calorons.
\end{remark*}

The embedded BPS caloron \eqref{eq:Fundamental Calorons} for a root $\alpha$ has the asymptotics of an abelian $S^1$--invariant caloron where the Higgs field is the one of a Dirac monopole along the coroot $\alpha^{\vee}$ with a singularity at the origin,
\[
\Phi= \epsilon^{-1}\omega - \frac{1}{2r}\alpha^{\vee}.
\]
More formally, the following proposition is an immediate consequence of Propositions \ref{prop:BPS asymptotics} and \ref{prop:rotated BPS asymptotics}.
\begin{prop}\label{prop:G caloron asymptotics}
Given $\omega\in (0,\tfrac{1}{2})$ there exists $r_\infty(\epsilon)\propto \epsilon$ such that outside of $B_{r_\infty(\epsilon)}(0)\times S^1$  the pair $(\A_\mu (\omega),f_\mu)$ of \eqref{eq:Fundamental Calorons} satisfies
\[
\left(f_{\mu}\right)^{*}\A_{\mu}(\omega)=\A_\infty (\omega,\alpha_\mu^\vee)+a_{\rm BPS,\mu},
\]
with $r^k \vert \nabla_{\A_\infty}^k a_{\rm BPS,\mu} \vert \leq C \epsilon^{-1} e^{-c\,\epsilon^{-1} r}$ for all $k\geq 0$ and $\epsilon$--independent constants $C,c>0$. Moreover, $(\A_\mu(\omega),f_\mu)$ has instanton number $n_0=1$ if $\mu=0$ and $n_0=0$ otherwise.
\end{prop}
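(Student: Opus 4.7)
The plan is to deduce the proposition from its $SU(2)$ counterparts, Propositions \ref{prop:BPS asymptotics} and \ref{prop:rotated BPS asymptotics}, by pushing forward along the Lie algebra homomorphism $\rho_\mu$ and adding the abelian contribution $\omega'_\mu\, dt$ valued in the centraliser $\ker \alpha_\mu$. The framing $f_\mu$ is induced from $f^\pm_{\rm BPS}$, acting as the identity on the $T'$ factor.

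For $\mu \geq 1$, I would apply $\rho_\mu$ to the identity of Proposition \ref{prop:BPS asymptotics} for the $SU(2)$ caloron $\A^+_{\rm BPS}$ with mass $v = \epsilon^{-1}\cdot\tfrac{1}{2}\alpha_\mu(\omega)$, and add $\omega'_\mu\, dt$ to both sides to obtain
\[
(f_\mu)^* \A_\mu(\omega) = \rho_\mu\bigl(\A_\infty\bigl(\tfrac{1}{2}\alpha_\mu(\omega), 1\bigr)\bigr) + \omega'_\mu\, dt + \rho_\mu(a^+_{\rm BPS})
\]
outside a ball of radius proportional to $\epsilon$. Using $\rho_\mu(i\tau_3) = \alpha_\mu^\vee$, the decomposition $\omega = \omega'_\mu + \tfrac{1}{2}\alpha_\mu(\omega)\alpha_\mu^\vee$, and the fact that associating the Hopf bundle via $\rho_\mu|_{S^1}\co U(1) \to T$ yields $H^{\alpha_\mu^\vee}$ with Dirac connection $A^{\alpha_\mu^\vee}$, the first two terms collapse exactly to $\A_\infty(\omega, \alpha_\mu^\vee)$, while $\rho_\mu(a^+_{\rm BPS})$ inherits the exponential decay estimate from Proposition \ref{prop:BPS asymptotics} up to the constant $\|\rho_\mu\|$. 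The $\mu = 0$ case proceeds in parallel via Proposition \ref{prop:rotated BPS asymptotics} for $\A^-_{\rm BPS}$ of mass $-\epsilon^{-1}\cdot\tfrac{1}{2}\alpha_0(\omega)\in (0,(2\epsilon)^{-1})$, which is well-defined as $\alpha_0(\omega)\in (-1,0)$ for $\omega\in \mathring{A}^+$: since $\rho_0(i\tau_3) = -\alpha_0^\vee$, the $SU(2)$-level magnetic charge $-1$ also maps to $\alpha_0^\vee$ under $\rho_0$, and the decomposition $\omega = \omega'_0 + \tfrac{1}{2}\alpha_0(\omega)\alpha_0^\vee$ again assembles everything into $\A_\infty(\omega, \alpha_0^\vee)$.

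For the instanton number, I would observe that when $\mu \geq 1$ both $\A^+_{\rm BPS}$ and $\omega'_\mu\, dt$ are $S^1$-invariant, so $\A_\mu(\omega)$ lifts to a $\Z$-equivariant representative on $\R^3\times \R$ with constant clutching map, giving $n_0 = 0$. When $\mu = 0$, Proposition \ref{prop:rotated BPS asymptotics} provides an $SU(2)$-valued clutching map $h\co S^3\to SU(2)$ of degree $1$; composing with $\rho_0$ gives the clutching map of $\A_0(\omega)$, whose degree on $\pi_3(G)\cong \Z$ equals the Dynkin index of the embedding $\rho_0\co SU(2)\to G$. Since $-\alpha_0$ is the highest root of $\Lie{g}$, hence long, this Dynkin index is $1$, yielding $n_0 = 1$.

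The main obstacle is the Dynkin-index computation in the $\mu = 0$ case. In the simply-laced case every root embedding has Dynkin index $1$, so the claim is immediate. In the non-simply-laced case one must use the specific fact that the highest root is long to rule out a larger Dynkin index (short root embeddings can have index $2$ or $3$ in $B_n, C_n, F_4, G_2$). The rest of the proof is routine: algebraic matching of the decomposition of $\omega$ against the push-forward of the $SU(2)$ asymptotic model, and propagation of the exponential decay through the bounded linear map $\rho_\mu$.
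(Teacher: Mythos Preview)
Your proposal is correct and follows exactly the route the paper takes: the proposition is stated there as ``an immediate consequence of Propositions \ref{prop:BPS asymptotics} and \ref{prop:rotated BPS asymptotics}'', and you have simply unpacked that immediacy by pushing the $SU(2)$ asymptotics forward through $\rho_\mu$ and matching the decomposition $\omega = \omega'_\mu + \tfrac{1}{2}\alpha_\mu(\omega)\alpha_\mu^\vee$. Your treatment of the instanton number for $\mu=0$ via the Dynkin index of the highest-root $SU(2)$ is in fact more explicit than the paper, which leaves this point implicit; one small slip is that for $\mu=0$ you refer to the ``mass'' $-\epsilon^{-1}\tfrac{1}{2}\alpha_0(\omega)$ when you mean the $SU(2)$ holonomy parameter (the underlying BPS mass is $\epsilon^{-1}(\tfrac{1}{2}+\tfrac{1}{2}\alpha_0(\omega))$), but this does not affect the argument since Proposition \ref{prop:rotated BPS asymptotics} is phrased in terms of the holonomy parameter.
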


In the rest of the paper we will refer to $\A_\mu(\omega)$ as the fundamental caloron of ``type'' $\alpha_\mu^\vee$ with holonomy parameter $\omega$.

\begin{remark}\label{rmk:G caloron asymptotics}
The framing $f_\mu$ with the properties of Proposition \ref{prop:G caloron asymptotics} is uniquely defined up to an element of $\tu{Aut}(H^{\alpha_\mu^\vee},\A_\infty)\simeq T$. Note however that framings related by an element of the subgroup $\rho_\mu (T'\times\{ 1\})\simeq \tu{Aut}(P_{SU(2)}\times_{\rho_\mu}G,\A_\mu(\omega))$ yield gauge equivalent framed calorons. Hence $f_\mu$ is uniquely defined up to an element $\psi\in\rho_\mu (\{ 1\}\times U(1))\simeq \tu{Aut}(H^{\alpha_\mu^\vee},\A_\infty)/ \tu{Aut}(P_{SU(2)}\times_{\rho_\mu}G,\A_\mu(\omega))$, where $U(1)$ is the maximal torus of $SU(2)$.	
\end{remark}

\begin{remark*}
For uniformity of notation, if $G=SU(2)$ we set $\A_1(\omega)=\A^+_{\rm BPS}$ and $\A_0(\omega) = \A^-_{\rm BPS}$.	
\end{remark*}

\section{Approximate Solutions}
\label{sec:approximate solutions}

The idea of our result is to build a caloron by gluing the fundamental solutions of the previous section into a singular background configuration. In this section we describe this singular background and then use fundamental calorons to produce a smooth connection that satisfies the self-duality equations only in an approximate sense. In the next two sections we will then use analysis to deform this approximate solution into an actual caloron.

\subsection{The initial singular abelian solution}

The singular background solution is an $S^1$--invariant abelian caloron obtained from a sum of Dirac monopoles on $\R^3$.

Recall that given a point $p\in \R^3$ and a charge $\gamma\in \Lambda$ we have a Dirac monopole $(A^\gamma_p,\Phi^\gamma_p)$ on $\R^3\setminus \{ p\}$ with
\[
\Phi^\gamma_p = -\tfrac{1}{2|x-p|}\gamma, \qquad dA^\gamma_p = \ast_{\R^3} d\Phi^\gamma_p
\]
We then obtain a caloron $A^\gamma_p + \epsilon\, \Phi^\gamma_p\, dt$ on the bundle $H^\gamma_p\ra (\R^3\setminus\{ p\})\times S^1$. Since principal torus bundles form a group, given distinct points $p,p'\in\R^3$ and charges $\gamma,\gamma'\in\Lambda$ we can also ``add'' the two Dirac monopoles to obtain a caloron $A^\gamma_p + A^{\gamma'}_{p'} + \epsilon\, \left( \Phi^\gamma_p + \Phi^{\gamma'}_{p'}\right) dt$ on the bundle $H^\gamma_p \times_{(\R^3\setminus \{ p,p'\})\times S^1}H^{\gamma'}_{p'}$ on $(\R^3\setminus \{ p,p'\})\times S^1$.

Now, fix $\omega\in\mathring{A}^+$ and non-negative integers $n_0,n_1,\dots, n_{\rm rk}\geq 0$. We then define a total magnetic charge $\gamma_\tu{m}$ as in \eqref{eq:Monopole:Charges}. Consider $n:=\sum _{\mu=0}^{{\rm rk}}n_{\mu}$ distinct points $p^{1}_{0},\dots, p^{n_0}_{0},\dots, p^{1}_{\rm rk},\dots, p^{n_{\rm rk}}_{\rm rk}\in \R^{3}$. We fix $d_{\rm min},d_{\rm max}>0$ such that $\vert p^{i}_{\mu}-p^{j}_{\nu}\vert>d_{\rm min}$ for each distinct pair of points and all the points are contained in $B_{d_{\rm max}}(0)\subset \R^3$. In the rest of the paper all constants are allowed to depend on $\omega,d_{\rm min},d_{\rm max}$ without further notice and will be uniform in the positions of the $n$ points provided the bounds given by $d_{\rm min}$ and $d_{\rm max}$ remain satisfied.

We now define the $S^1$--invariant abelian caloron 
\begin{equation}\label{eq:Singular:Caloron}
\A_{\text{sing}}=A_{\text{sing}}+\epsilon\, \Phi_{\text{sing}}\,dt = \sum_{\mu=0}^{\rm rk}\sum_{i=1}^{n_{\mu}}A^{\alpha_\mu^\vee}_{p^i_\mu} + \epsilon \left( \epsilon^{-1}\omega + \sum_{\mu=0}^{\rm rk}\sum_{i=1}^{n_{\mu}}\Phi^{\alpha_\mu^\vee}_{p^i_\mu}\right) dt
\end{equation}
on a bundle $P_\tu{sing}$ over $\left(\R^{3}\backslash\{p_{1}^{0},\dots,p_{n_{r}}^{r}\}\right)\times S^{1}$ with structure group the maximal torus $T$ of $G$. Of course, we can also regard $\A_\tu{sing}$ as a connection on the $G$--bundle $P_\tu{sing}\times_{T}G$.

\begin{example}\label{eg:Singular:SU(2)}
When $G=SU(2)$, $\A_{\rm sing}$ is simply the superposition of the flat connection $i\omega\,\tau_3$, for $\omega\in (0,\tfrac{1}{2})$, with $n_1$ Dirac monopoles of charge $1$ and $n_0$ Dirac monopoles of charge $-1$.
\end{example}

We will now collect some of the properties of $\A_\tu{sing}$. First of all, consider the behaviour of $\A_\tu{sing}$ at infinity. It follows immediately from \eqref{eq:Singular:Caloron} and the explicit formula for the Dirac monopole that the holonomy parameter and total magnetic charge of $\A_{\rm sing}$ are precisely $\omega$ and $\gamma_\tu{m}$ respectively. The next proposition describes instead the singular behaviour of $\A_{\rm sing}$ near $p^i_\mu$. Set $r^i_\mu:=|x-p^i_\mu|$. 

\begin{prop}\label{prop:Singular:Caloron:singularity}
There exists $r_0>0$ and a bundle isomorphism $f^i_\mu\co H^{\alpha_i^\vee}\ra P_{\rm sing}$ over $B_{r_0}(p^i_\mu)\times S^1$ such that
\[
(f^i_\mu)^\ast\A_{\rm sing} = \A_\infty (\omega^i_\mu,\alpha_\mu^\vee) + a^i_\mu
\]
with $(r^i_\mu)^k \vert \nabla_{\A_\infty}^k a^i_\mu\vert \leq C r^i_\mu$ for all $k\geq 0$ and $\omega^i_\mu = \omega + O(\epsilon)$.
\proof
Write
\[
A_{\rm sing} = A^{\alpha_\mu^\vee}_{p^i_\mu} + \sum_{(\nu,j)\neq (\mu,i)}A^{\alpha_\nu^\vee}_{p^j_\nu}, \qquad \Phi_{\rm sing} = \epsilon^{-1}\omega + \Phi^{\alpha_\mu^\vee}_{p^i_\mu} + \sum_{(\nu,j)\neq (\mu,i)}\Phi^{\alpha_\nu^\vee}_{p^j_\nu}.
\]
A classical multipole expansion centred at a point away from the singularity allows one to estimate the term $\sum_{(\nu,j)\neq (\mu,i)}\Phi^{\alpha_\nu^\vee}_{p^j_\nu}$. The holonomy parameter $\omega^i_\mu$ is defined using the constant term in this expansion:
\[
\epsilon^{-1}\omega^i_\mu = \epsilon^{-1}\omega - \sum_{(\nu,j)\neq (\mu,i)}\frac{1}{2|p^j_\nu-p^i_\mu|}\alpha_{\nu}^{\vee}.
\]  
Solving the Bogomolny equation in a radial gauge centred at $p^i_\mu$ then defines the bundle map $f^i_\mu$ and allows one to estimate $\sum_{(\nu,j)\neq (\mu,i)}A^{\alpha_\nu^\vee}_{p^j_\nu}$ in terms of the control of the Higgs field.
\endproof
\end{prop}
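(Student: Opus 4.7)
The plan is to split $\A_\tu{sing}$ near $p^i_\mu$ into the singular Dirac monopole attached to $p^i_\mu$ and a smooth abelian background formed by the flat part $\epsilon^{-1}\omega$ together with the Dirac monopoles at the remaining points, then to Taylor expand the background at $p^i_\mu$ and absorb its value there into a shifted holonomy parameter $\omega^i_\mu$; the first-order Taylor remainder will be $a^i_\mu$.

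First I would pick $r_0 \le d_\tu{min}/2$ so that on $B_{r_0}(p^i_\mu)$ the background pair
\[
\widetilde{A} := \sum_{(\nu,j)\neq(\mu,i)} A^{\alpha_\nu^\vee}_{p^j_\nu}, \qquad \widetilde{\Phi} := \epsilon^{-1}\omega + \sum_{(\nu,j)\neq(\mu,i)} \Phi^{\alpha_\nu^\vee}_{p^j_\nu}
\]
is a smooth abelian Bogomolny pair: its Higgs field is harmonic on the ball, and the Dirac strings of the individual bundles may be chosen disjoint from $B_{r_0}(p^i_\mu)$ so that $\widetilde{A}$ is smooth in a local trivialisation. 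A classical multipole / Taylor expansion at $p^i_\mu$ then yields $\widetilde{\Phi}(x) = \widetilde{\Phi}(p^i_\mu) + O(r^i_\mu)$, and one sets
\[
\epsilon^{-1}\omega^i_\mu := \widetilde{\Phi}(p^i_\mu) = \epsilon^{-1}\omega - \sum_{(\nu,j)\neq(\mu,i)} \frac{1}{2|p^j_\nu - p^i_\mu|}\alpha_\nu^\vee,
\]
so that $\omega^i_\mu - \omega$ is of order $\epsilon$, the sum being uniformly bounded by the $d_\tu{min}$-separation assumption.

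For the connection, only the monopole at $p^i_\mu$ contributes nontrivial first Chern class across a small sphere around $p^i_\mu$, so $P_\tu{sing}$ restricts to $H^{\alpha_\mu^\vee}$ on $B_{r_0}(p^i_\mu)\times S^1$. I would construct $f^i_\mu$ as a bundle isomorphism carrying $A^{\alpha_\mu^\vee}_{p^i_\mu}$ into its standard Dirac form (matching the $A^{\alpha_\mu^\vee}$ inside $\A_\infty$) and simultaneously placing $(f^i_\mu)^\ast \widetilde{A}$ in the radial (Cronstr\"om) gauge centred at $p^i_\mu$, i.e.\ $\iota_{x-p^i_\mu}(f^i_\mu)^\ast \widetilde{A}=0$. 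In this gauge the Poincar\'e-lemma reconstruction $\widetilde{A}(x) = \int_0^1 \iota_{x-p^i_\mu}\widetilde{F}\bigl(p^i_\mu + s(x-p^i_\mu)\bigr)\, s\, ds$, with $\widetilde{F}=d\widetilde{A}=\ast_{\R^3}d\widetilde{\Phi}$, shows $(f^i_\mu)^\ast \widetilde{A}$ vanishes to first order at $p^i_\mu$ and is pointwise $C^k$-controlled by $\widetilde{F}$, which is smooth and uniformly bounded on $B_{r_0}(p^i_\mu)$. Combining with the Taylor remainder for $\widetilde{\Phi}$ gives $(f^i_\mu)^\ast \A_\tu{sing} = \A_\infty(\omega^i_\mu,\alpha_\mu^\vee) + a^i_\mu$ with $a^i_\mu$ smooth and $O(r^i_\mu)$, whence on the bounded ball the scale-invariant bound $(r^i_\mu)^k|\nabla_{\A_\infty}^k a^i_\mu| \le C\, r^i_\mu$ for every $k\geq 0$.

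The hard part will be the simultaneous gauge-matching in the construction of $f^i_\mu$: one must choose a single local trivialisation that puts the central Dirac monopole into its model form and the smooth background into radial gauge at $p^i_\mu$. Each condition fixes the trivialisation only up to a residual $T$-automorphism, and the matching can be effected by an explicit constant-$\Lie{h}$ gauge transformation absorbing the value $\widetilde{A}(p^i_\mu)$, which is freely gaugeable in the abelian setting. A minor additional point is that the constants $C$ should depend only on $\omega$, $d_\tu{min}$, $d_\tu{max}$, which follows from the uniform $C^k$-boundedness of $\widetilde{F}$ on $B_{r_0}(p^i_\mu)$ under those assumptions.
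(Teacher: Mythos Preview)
Your proposal is correct and follows essentially the same route as the paper: split off the Dirac monopole at $p^i_\mu$ from the smooth abelian background, Taylor/multipole expand the background Higgs field to define $\omega^i_\mu$, and use a radial gauge centred at $p^i_\mu$ together with the Bogomolny equation to control the connection part. Your worry about the ``simultaneous gauge-matching'' is not a genuine obstacle here --- everything is abelian with structure group $T$, so the gauge transformations commute and one can simply compose the trivialisation putting the central monopole in standard form with the constant shift killing $\widetilde{A}(p^i_\mu)$.
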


A final simple but crucial observation is that $\A_{\rm sing}$ is abelian in the following uniform quantitative sense away from the singularities.

\begin{lemma}\label{lem:Singular:Caloron:Holonomy:Estimate}
There exists $\epsilon_0, \sigma>0$ such that for $\epsilon\in (0,\epsilon_0)$ there exists $r_0(\epsilon)\propto \epsilon$ such that outside of $\bigcup_{\mu,i}{B_{r_0(\epsilon)}(p^i_\mu)}\times S^1$ we have
\[
\alpha _\mu (\epsilon\,\Phi_{\rm sing}) \geq \sigma>0 \mbox{ for all }\mu=1,\dots,{\rm rk}, \qquad \alpha_0 (\epsilon\,\Phi_{\rm sing})\geq -1+\sigma>-1.
\]

\end{lemma}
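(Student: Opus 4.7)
The plan is to pair $\epsilon\,\Phi_{\rm sing}$ with each root $\alpha_\nu$ and exploit the sign structure of the extended Cartan matrix. Substituting \eqref{eq:Singular:Caloron} and using $\widetilde{C}_{\mu\nu}=\alpha_\nu(\alpha_\mu^\vee)$ one obtains, for each $\nu=0,1,\dots,{\rm rk}$, the explicit identity
\[
\alpha_\nu\!\left(\epsilon\,\Phi_{\rm sing}\right) \;=\; \alpha_\nu(\omega)\;-\;\frac{\epsilon}{2}\sum_{\mu=0}^{\rm rk}\sum_{i=1}^{n_\mu}\frac{\widetilde{C}_{\mu\nu}}{|x-p^i_\mu|}.
\]

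The crucial observation is that $\widetilde{C}_{\nu\nu}=2$, while $\widetilde{C}_{\mu\nu}\leq 0$ for $\mu\neq\nu$. Hence every off-diagonal contribution in the sum is non-negative and only the $\mu=\nu$ terms can decrease the value. Discarding the favourable off-diagonal contributions yields the clean lower bound
\[
\alpha_\nu\!\left(\epsilon\,\Phi_{\rm sing}\right) \;\geq\; \alpha_\nu(\omega)\;-\;\epsilon\sum_{i=1}^{n_\nu}\frac{1}{|x-p^i_\nu|}.
\]
I would then set $r_0(\epsilon):=C\epsilon$ for a constant $C>0$ to be fixed. Outside $\bigcup_{\mu,i}B_{r_0(\epsilon)}(p^i_\mu)\times S^1$ one has in particular $|x-p^i_\nu|\geq C\epsilon$ for every $i$, so each individual term $\epsilon/|x-p^i_\nu|$ is bounded by $1/C$ and the estimate becomes $\alpha_\nu(\epsilon\,\Phi_{\rm sing})\geq \alpha_\nu(\omega)-n_\nu/C$.

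Since $\omega\in\mathring{A}^+$ the defining inequalities \eqref{eq:Alcove} are all strict, giving $\alpha_\nu(\omega)>0$ for $\nu=1,\dots,{\rm rk}$ and $1+\alpha_0(\omega)>0$. Choose $C$ large enough that $n_\nu/C<\tfrac{1}{2}\alpha_\nu(\omega)$ for each $\nu\geq 1$ and $n_0/C<\tfrac{1}{2}(1+\alpha_0(\omega))$, and set
\[
\sigma \;:=\; \tfrac{1}{2}\min\bigl\{\alpha_1(\omega),\dots,\alpha_{\rm rk}(\omega),\,1+\alpha_0(\omega)\bigr\}\,>\,0.
\]
Both families of inequalities in the statement then follow immediately. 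Finally take $\epsilon_0$ small enough that $r_0(\epsilon)=C\epsilon<\tfrac{1}{2}d_{\rm min}$ for all $\epsilon\in(0,\epsilon_0)$, so that the balls $B_{r_0(\epsilon)}(p^i_\mu)$ are disjoint. There is no genuine obstacle: the lemma is essentially an unpacking of \eqref{eq:Singular:Caloron} combined with the defining inequalities of the fundamental alcove and the standard sign pattern of the extended Cartan matrix. The content of the argument is conceptual rather than technical: the Dirac sources directed along the coroots $\alpha_\mu^\vee$ deform the Higgs field along directions that are compatible with the alcove inequalities, so once one keeps a distance of order $\epsilon$ from each singular point the constant term $\omega\in\mathring{A}^+$ dominates uniformly.
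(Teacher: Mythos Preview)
Your argument is correct and in fact more elementary than the paper's. The paper observes that each $\alpha_\nu(\epsilon\,\Phi_{\rm sing})$ is a harmonic function on $\R^3\setminus\{p^i_\mu\}$ and invokes the maximum/minimum principle on the complement of the balls $B_{r_0(\epsilon)}(p^i_\mu)$, reducing the estimate to checking the boundary values at infinity (where $\epsilon\,\Phi_{\rm sing}\approx\omega$) and on each sphere $\{r^i_\mu=r_0(\epsilon)\}$ (where one uses the local expansion of Proposition~\ref{prop:Singular:Caloron:singularity}). Your route bypasses this entirely: you write out $\alpha_\nu(\epsilon\,\Phi_{\rm sing})$ explicitly and exploit the sign pattern $\widetilde{C}_{\nu\nu}=2$, $\widetilde{C}_{\mu\nu}\leq 0$ of the extended Cartan matrix to drop the favourable off-diagonal Dirac contributions, leaving a one-line lower bound. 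This is a genuinely cleaner argument for this particular lemma; the paper's maximum-principle approach would transfer more readily to situations where the Higgs field is not given by an explicit closed formula, but here that generality is not needed.
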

In other words, away from the singularities $\epsilon\,\Phi_{\rm sing}$ takes values in a fixed compact subset of $\mathring{A}^+$.

\begin{proof}
Observe that $\alpha(\epsilon\, \Phi_{\rm sing})$ is a harmonic function on $\R^{3}\backslash\{p_{0}^{1},\dots,p^{n_{\tu{rk}}}_{\tu{rk}}\}$ for any $\alpha\in\Lie{h}^\ast$.
Hence, by the maximum/minimum principle on the complement of $\bigcup_{\mu,i}{B_{r_0(\epsilon)}(p^i_\mu)}$, it suffices to check that the inequalities are satisfied as $|x|\ra \infty$ and on the interior boundaries $\{ r^i_\mu=r(\epsilon)\}$. Now, for large $|x|$ we have
\[
\alpha(\epsilon\, \Phi_{\rm sing}) \approx \alpha (\omega),
\]
while Proposition \ref{prop:Singular:Caloron:singularity} implies that near $p^i_\mu$ we have 
\[
\alpha(\epsilon\, \Phi_{\rm sing}) \approx \alpha (\omega^i_\mu) - \frac{\epsilon}{2r^i_\mu} \alpha (\alpha_\mu^\vee).
\]
Since $\omega\in\mathring{A}^+$ we can choose $\sigma$ so that the inequalities in the statement of the lemma are satisfied near infinity. Since $\omega^i_\mu = \omega + O(\epsilon)$, for $\epsilon$ sufficiently small we can also assume that the same inequalities are satisfied by $\omega^i_\mu$ instead of $\omega$. Finally, in order to take care of the singular term at $p^i_\mu$, fix $c>0$ sufficiently large so that $\frac{1}{2c}|\alpha (\alpha_\mu^\vee)|\leq \tfrac{1}{2}\sigma$ for $\alpha=\alpha_{\nu}$ for any $\nu=0,\dots,{\rm rk}$. Then, up to decreasing $\epsilon,\sigma$ slightly if neecessary, we can assume that the inequalities in the statement of the lemma are also satisfied for $r^i_\mu = c\,\epsilon=:r_0(\epsilon)$.
\end{proof}

\subsection{Desingularisation}
As the caloron $\A_{\text{sing}}$ is manifestly singular at the $p^{i}_{\mu}$, to find an approximate non-singular caloron on all of $\R^{3}\times S^{1}$ we need to glue in non-abelian calorons that match the singular behaviour of $\A_{\text{sing}}$ asymptotically. These are the fundamental calorons of the previous section.

For $R=R(\epsilon)\in (0,d_{\rm min})$ to be fixed later, decompose $\R^{3}\times S^{1}$ as
\[
\R^{3}\times S^{1}=U_{{\rm sing}}\cup \bigsqcup_{\mu,i}U^i_{\mu},
\]
where
\[
U_{{\rm sing}}=\left(\R^{3}\backslash \bigsqcup_{\mu,i}B_{\frac{R}{2}}(p^{i}_{\mu})\right)\times S^{1},\qquad 
U^i_{\mu}=B_{R}(p^{i}_{\mu})\times S^{1}.
\]
Up to the $S^1$--factor, these open sets intersect in a disjoint union of annuli centred at the $p^i_{\mu}$'s.

By restriction, we think of $(P_{\rm sing}\times_T G,\A_{\rm sing})$ as a bundle with connection on $U_{\rm sing}$. Similarly, for each $\mu=0,\dots, {\rm rk}$ and $i=1,\dots,n_\mu$ we identify $B_R(p^i_\mu)$ with $B_R(0)\subset \R^3$ and endow $U^i_\mu$ with the pair $(P_{SU(2)}\times_{\rho_\mu}G,\A_\mu(\omega^i_\mu))$ where $\A_\mu(\omega^i_\mu)$ is the fundamental caloron of type $\alpha_\mu^\vee$ and holonomy parameter $\omega^i_\mu$ defined in Proposition \ref{prop:Singular:Caloron:singularity}.

By Propositions \ref{prop:G caloron asymptotics} and \ref{prop:Singular:Caloron:singularity} on the overlap $U_{\rm sing}\cap U^i_\mu$ there are isomorphisms $f^i_\mu, f_\mu$ of $P_{\rm sing}\times_T G$ and $P_{SU(2)}\times_{\rho_\mu}G$ with $H^{\alpha_i^\vee}\times_T G$ such that
\[
(f^i_\mu)^\ast\A_{\rm sing} = \A_\infty (\omega^i_\mu,\alpha_\mu^\vee) + a^i_\mu, \qquad f_\mu^\ast\A_\mu(\omega^i_\mu) = \A_\infty (\omega^i_\mu,\alpha_\mu^\vee) + a_{\rm BPS,\mu}.
\]
We have the additional freedom to choose a gluing parameter $\psi^i_\mu\in \rho_\mu (\{ 1\}\times U(1))$. This gluing parameter is there to line up the framings of the $\A_{\text{sing}}$ and $\A_{\mu}$. It is $U(1)$ valued rather than $T$ valued due to Remark \ref{rmk:G caloron asymptotics}. We can then define a smooth $G$--bundle $P$ on $\R^3\times S^1$ identifying $P_{\rm sing}\times_T G\ra U_{\rm sing}$ and $P_{SU(2)}\times_{\rho_\mu}G\ra U^i_\mu$ over the overlap $U_{\rm sing}\cap U^i_\mu$ via
\begin{subequations}\label{eq:Approximate:Solution}
\begin{equation}\label{eq:Approximate:Solution:Clutching}
f_\mu\circ \psi^i_\mu \circ (f^i_\mu)^{-1}\co P_{\rm sing}\times_T G\ra P_{SU(2)}\times_{\rho_\mu}G.
\end{equation}
On $P$ we define a connection $\A'_\epsilon = \A'_\epsilon (\omega,\{ (p^i_\mu,\psi^i_\mu)\}_{\mu,i})$ as follows. Fix a smooth bump function $\chi$ to interpolate between $\chi(r)=1$ for $r\in[0,\frac{R}{2})$ and and $\chi(r)=0$ for $r\geq R$ and set $\chi^i_\mu(x,t) = \chi(r^i_\mu)$. We set
\begin{equation}\label{eq:Approximate:Solution:Connection}
\A_{\epsilon}'=\begin{dcases}
\mathbb{A}_{\mu}(\omega^i_\mu) & \mbox{ if } r^i_\mu \leq \tfrac{1}{2}R,\\
\mathbb{A}_\infty (\omega^i_\mu,\alpha_\mu^\vee)+\chi^i_{\mu}\,(\psi^i_\mu)^\ast a_{\text{BPS},\mu}+\left(1-\chi^i_{\mu}\right)a^{i}_{\mu} &  \mbox{ if } \tfrac{1}{2}R\leq r^i_\mu \leq R,\\
\mathbb{A}_{\text{sing}} & \mbox{ otherwise}.
\end{dcases}
\end{equation}
\end{subequations}

\subsubsection{Estimates of the error}
The connection $\A'_\epsilon$ is an approximate caloron rather than a true caloron since it does not satisfy $F_{\A'_{\varepsilon}}^{+}=0$ on the overlaps $U_{\rm sing}\cap U^i_\mu$.

Recall from Propositions \ref{prop:G caloron asymptotics} and \ref{prop:Singular:Caloron:singularity} that $a^{i}_{\mu}=O\left(r_{i}^{\mu}\right)$ and $(\psi^i_\mu)^\ast a_{\text{BPS},\mu}=O\left(\epsilon^{-1}e^{-c\,\epsilon^{-1}r^i_\mu}\right)$. We define $R(\epsilon)$ implicitly by
\[
R(\epsilon) = \epsilon^{-1}e^{-c\,\epsilon^{-1}R(\epsilon)}
\]
 so that these two contributions to the error have comparable size. Note that
\begin{equation}\label{eq:Gluing:Region}
R(\epsilon)\approx \epsilon\,\vert\ln\epsilon\vert.
\end{equation}
In particular, as $\epsilon\ra 0$ we have $R(\epsilon)\ra 0$ and $\epsilon^{-1}R(\epsilon)\ra \infty$. Thus as $\epsilon\ra 0$ the sets $U^\epsilon_{\rm sing}$ form an exhaustion of $\left(\R^{3}\backslash\{p_{1}^{0},\dots,p_{n_{r}}^{r}\}\right)\times S^{1}$. On the other hand, if we rescale $U^i_\mu$ by $\epsilon^{-1}$ we obtain an exhaustion of $\R^3\times S^1$.

\begin{lemma}\label{lemma:bounds on Fplus} Let $\A'_\epsilon$ be the approximate caloron defined in \eqref{eq:Approximate:Solution}. Then the self-dual part of the curvature satisfies
\[
\vert F_{\A'_\epsilon}^{+}\vert \leq \frac{1}{r^{i}_\mu}\max\left(\vert a_{\rm BPS,\mu}\vert, \vert a^{i}_{\mu}\vert\right)+\max\left(\vert a_{\rm BPS,\mu}\vert^{2}, \vert a^{i}_{\mu}\vert^{2}\right)=O(1) \qquad \text{on }U_{\rm sing}\cap U^i_\mu.
\]
\end{lemma}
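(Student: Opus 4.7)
The plan is to exploit the fact that on the overlap $U_{\rm sing}\cap U^i_\mu$ the approximate connection $\A'_\epsilon$ is obtained by interpolating, via the cutoff $\chi^i_\mu$, between two \emph{genuine} anti-self-dual connections. Concretely, working in the common trivialization onto $H^{\alpha_\mu^\vee}\times_T G$ afforded by $f^i_\mu$ on one side and by $f_\mu\circ\psi^i_\mu$ on the other (compatible via the clutching \eqref{eq:Approximate:Solution:Clutching}), Propositions \ref{prop:G caloron asymptotics} and \ref{prop:Singular:Caloron:singularity} write both $\A_{\rm sing}$ and $\A_\mu(\omega^i_\mu)$ as perturbations $\A_\infty + a$ of the same background $\A_\infty := \A_\infty(\omega^i_\mu,\alpha_\mu^\vee)$, with $a_0 := a^i_\mu$ and $a_1 := (\psi^i_\mu)^* a_{\rm BPS,\mu}$ respectively. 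Crucially, the connections $\A_j := \A_\infty + a_j$ are both anti-self-dual, as is $\A_\infty$ itself.

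Setting $b := a_1 - a_0$ and $\chi := \chi^i_\mu$, the definition \eqref{eq:Approximate:Solution:Connection} reads $\A'_\epsilon = \A_0 + \chi\, b$. Expanding the curvature and substituting $d_{\A_0}b = F_{\A_1} - F_{\A_0} - \tfrac{1}{2}[b\wedge b]$ yields the clean identity
\begin{equation*}
F_{\A'_\epsilon} = (1-\chi)F_{\A_0} + \chi F_{\A_1} + d\chi\wedge b - \tfrac{1}{2}\chi(1-\chi)[b\wedge b].
\end{equation*}
Taking self-dual parts, the first two terms vanish by anti-self-duality of $\A_0$ and $\A_1$, so
\begin{equation*}
F^+_{\A'_\epsilon} = (d\chi\wedge b)^+ - \tfrac{1}{2}\chi(1-\chi)([b\wedge b])^+.
\end{equation*}

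From this the stated inequality is essentially immediate: $|d\chi| \leq C/R \leq C/r^i_\mu$ since $d\chi$ is supported in $r^i_\mu\in[R/2,R]$, while $|b|\leq |a_0|+|a_1|\leq 2\max(|a^i_\mu|,|a_{\rm BPS,\mu}|)$ controls the first term, and an analogous bound controls the commutator. The uniform $O(1)$ conclusion follows by inserting the explicit estimates $|a^i_\mu|\leq C r^i_\mu$ from Proposition \ref{prop:Singular:Caloron:singularity} and $|a_{\rm BPS,\mu}|\leq C\epsilon^{-1}e^{-c\epsilon^{-1}r^i_\mu}$ from Proposition \ref{prop:G caloron asymptotics}, and invoking the defining balance $\epsilon^{-1}e^{-c\epsilon^{-1}R}=R$ of the gluing radius \eqref{eq:Gluing:Region}, which is chosen precisely so that the two contributions to $b$ are comparable on the overlap. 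The conceptual crux — and the step one might initially expect to cause trouble — is the observation that for an interpolation between two anti-self-dual connections the self-dual part of the error collapses to a single $d\chi$-term plus a purely quadratic commutator; without the cancellation $F^+_{\A_0}=F^+_{\A_1}=0$ one would be left with a linear term $d_{\A_\infty}a$ which is of order $1/r^i_\mu$ at the relevant scale and would give a strictly worse bound.
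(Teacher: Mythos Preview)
Your proof is correct and rests on the same key observation as the paper's: both $\A_\infty + a^i_\mu$ and $\A_\infty + (\psi^i_\mu)^\ast a_{\rm BPS,\mu}$ are anti-self-dual, so the dangerous linear piece in the curvature expansion cancels. The paper organises this slightly differently: it expands $\A'_\epsilon = \A_\infty + a$ around the abelian model $\A_\infty$, writes $F^+_{\A'_\epsilon} = d^+_{\A_\infty} a + \tfrac12[a,a]^+$, and then uses $d^+_{\A_\infty}\tilde a = -\tfrac12[\tilde a,\tilde a]^+$ for $\tilde a = a^i_\mu$ and $\tilde a = a_{\rm BPS,\mu}$ separately to reduce $d^+_{\A_\infty} a$ to a $d\chi$--term plus quadratic commutators. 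Your route via the interpolation identity $F_{\A_0+\chi b} = (1-\chi)F_{\A_0} + \chi F_{\A_1} + d\chi\wedge b - \tfrac12\chi(1-\chi)[b\wedge b]$ packages the same cancellation in one line and is arguably tidier; it also makes transparent that only the \emph{difference} $b=a_1-a_0$ enters the error, which is conceptually pleasant. Either way the pointwise bound and the $O(1)$ conclusion (via the defining balance of $R(\epsilon)$) follow identically.
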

\begin{proof}
This follows form a direct computation. We have
\[
\A'_\epsilon=\mathbb{A}_\infty (\omega^i_\mu,\alpha_\mu^\vee)+a, \qquad a=\chi^{\mu}_{i}(\psi^i_\mu)^\ast a_{\text{BPS}}+\left(1-\chi^{\mu}_{i}\right)a^{i}_{\mu,\text{sing}}
\]
with $\mathbb{A}_\infty (\omega^i_\mu,\alpha_\mu^\vee)$ a caloron. Hence $F_{\A'_\epsilon}^+ = d_{\A_\infty}^+ a + \tfrac{1}{2}[a,a]^+$. In order to estimate $d_{\A_\infty}^+ a$ we use the fact that
\[
d_{\A_\infty}^+ \widetilde{a}= -\tfrac{1}{2}[\widetilde{a},\widetilde{a}]^+
\]
for $\widetilde{a}=a_{\rm BPS,\mu}$ and $\widetilde{a}=a^i_\mu$ (since $\A_\infty + \widetilde{a}$ is a caloron in either case), together with the fact that $|\nabla\chi^i_{\mu}|=O\left((r^i_{\mu})^{-1}\right)$.
\end{proof}

Thus the error $F_{\A'_\epsilon}^+$ is uniformly bounded in $\epsilon$, but it is also supported on a region of increasingly small size in the same limit and in this sense we can say that the error is increasingly small as $\epsilon\ra 0$. For example, $\| F_{\A'_\epsilon}^+\|^2_{L^2}\leq C\epsilon\, R(\epsilon)^3 =O(\epsilon^4 |\ln \epsilon|^3)$. On the other hand, as $\epsilon\ra 0$ the metric $g_\epsilon$ and connection $\A'_{\epsilon}$ degenerate, so it is not immediately clear that $\A'_\epsilon$ can be deformed to a genuine caloron for small $\epsilon>0$. In  the next section, we will introduce weighted H\"older spaces which are better suited to do the analysis in this degenerate limit and then use a quantitative version of the Implicit Function Theorem to deform $\A'_\epsilon$ to a nearby caloron $\A_\epsilon$.

\section{Linear Analysis}\label{sec:Analysis}

In this section we collect some fundamental results about mapping properties of the linear operators appearing in the deformation theory of calorons and in particular study dependence of constants on $\epsilon$ when we couple these operators to the approximate caloron $\A'_\epsilon$ of the previous section. In order to obtain uniform estimates, all the analyis is carried out in appropriate weighted H\"older spaces.

\subsubsection*{The operators} The deformation complex of an instanton $\A$ on a $G$--bundle $P\ra M^4$ is
\begin{subequations}
\begin{equation}\label{eq:Deformation:Complex}
0\ra \Omega^0(M;\tu{ad}\, P)\stackrel{d_\A}{\longrightarrow} \Omega^1(M;\tu{ad}\, P) \stackrel{d_\A^+}{\longrightarrow} \Omega^+(M;\tu{ad}\, P)\ra 0.
\end{equation}
In our set-up $M$ is (the complement of finitely many curves $\{ p\}\times S^1$ in) $\R^3\times S^1$. From \eqref{eq:Deformation:Complex} we deduce that the first-order operator governing the deformation theory of an instanton is
\begin{equation}\label{eq:Dirac:Deformation:Complex}
D_\A=d_\A^+\oplus d_\A^\ast\co \Omega^1(M;\tu{ad}\, P)\ra\Omega^+(M;\tu{ad}\, P)\oplus\Omega^0(M;\tu{ad}\, P).\end{equation}
\end{subequations}
We will study its mapping properties via the second-order operator $D_\A D_\A^\ast$. The Weitzenb\"ock formula (see \eg \cite{Freed:Uhlenbeck}) reads
\begin{equation}\label{eq:Weitzenbock}
D_\A D_\A^\ast = \nabla_{\A}^\ast\nabla_\A + F_\A^+,
\end{equation} 
where the action of $F_\A^+$ is a zeroth-order operator obtained from the Lie bracket in $\Lie{g}$ and, via identifications $\Lambda^0\oplus \Lambda^+\simeq \HH$ and $\Lambda^+\simeq\Imag\HH$, quaternionic multiplication. If $M$ is hyperk\"ahler (as in our set up) then the bundle $\Lambda^+T^\ast M$ is trivialised by parallel section and for an instanton $F_\A^+=0$. Therefore we will start the section with a discussion of the operator $\nabla_\A^\ast\nabla_\A$ acting on sections of the adjoint bundle.

\subsection{Fredholm theory}

In this subsection we establish results about the Bochner Laplacian $\nabla_\A^\ast\nabla_\A$ in the simplest situation where $\A$ is a fixed smooth connection on $\R^3\times S^1$ and the metric $g_\epsilon$ on $\R^3\times S^1$ is assumed fixed. In the second part of this section we will adapt these results to $\A = \A'_\epsilon$ and discuss dependence of constants on $\epsilon$.

Assume therefore that $\A$ is a connection on a (trivial) $G$--bundle $P$ over $\R^3\times S^1$ satisfying the boundary conditions \eqref{eq:Framing} for some $(\omega,\gamma_\tu{m})\in \mathring{A}^+\times\Lambda$.

\subsubsection{Weighted H\"older spaces}

By abuse of notation, let $r$ denote a smooth $S^1$--invariant function on $\R^3\times S^1$ with $r\geq 1$ and $r\approx |x|$ on $(\R^3\setminus B_{d})\times S^1$ for some $d> 1$. (When we apply the results of this section to $\A'_\epsilon$ we will require $d\geq d_{\rm max}$ so that all the singularities of $\A_{\rm sing}$ are contained in $B_d \times S^1$.) For example, we can take $r = \sqrt{1+|x|^2}$.

\begin{definition}\label{def:Caloron:Holder}
Given $k\in \Z_{\geq 0}$, $\alpha\in (0,1)$ and $\nu\in \R$, we define the $C^{k,\alpha}_\nu$--norm of a section $u$ of the (trivial) adjoint bundle $\tu{ad}\, P$ on $\R^3\times S^1$ by
\[
\sum_{j=0}^k{\| r^{-\nu+j} \nabla^j_{\A}u \|_{L^\infty}} + \sup_{\tu{dist}(p,p') \leq r(p,p')}{ r(p,p')^{-\nu +k}}\frac{|\nabla_{\A}^k u (p) - \nabla_{\A}^k u (p')|}{\tu{dist}(p,p')^\alpha},
\]
where norms are defined using the metric $g_\epsilon$, $r(p,p')=\min\{ r(p),r(p')\}$ and the difference of $\tu{ad}\, P$--valued tensors $\nabla_{\A}^k u (p) - \nabla_{\A}^k u (p')$ is computed using the parallel transport of the connection induced by $\A$ and the Levi--Civita connection of $g_\epsilon$. The same definition with the last term dropped defines the $C^{k}_\nu$--norm of $u$. The Banach spaces $C^{k,\alpha}_\nu$ and $C^{k}_{\nu}$ are defined as the closure of $C^{\infty}_c$ with respect to the corresponding norm. 
\end{definition}

Immediate consequences of the definition and the fact that $\R^3 \times S^1$ has cubic volume growth are the continuous embedding
\begin{equation}\label{eq:L2:embedding}
C^{0,\alpha}_\nu\subset L^2 \Longleftrightarrow \nu <-\tfrac{3}{2}
\end{equation}
and the integration-by-parts formula
\begin{equation}\label{eq:Integration:parts}
\langle \nabla_\A^\ast\nabla_\A u, v\rangle _{L^2}= \langle \nabla_\A u, \nabla_\A v\rangle_{L^2} \qquad \forall u\in C^{2,\alpha}_{\nu_1+1}, v\in C^{1,\alpha}_{\nu_2+1} \mbox{ with } \nu_1+\nu_2 < -3.
\end{equation}
Later in the paper, in order to control non-linearities in the equations we will also make use of the fact that any bounded pointwise bilinear form defines a continuous map
\begin{equation}\label{eq:Product:Weighted:Holder}
C^{0,\alpha}_{\delta_1}\times C^{0,\alpha}_{\delta_2} \longrightarrow C^{0,\alpha}_{\delta_1+\delta_2}.
\end{equation}
Combined with the compactness of the embedding $C^{1,
\alpha}(\Omega)\subset C^{0,\alpha}(\Omega)$ for a bounded domain $\Omega$, one can further deduce that multiplication by an element $u_1\in C^{0,\alpha}_{\delta_1}$ defines a \emph{compact} operator
\begin{equation}\label{eq:Comopact:Embedding:Weighted:Holder}
u_1 \times\, \cdot\, \co C^{1,\alpha}_{\delta_2} \longrightarrow C^{0,\alpha}_{\delta_1+\delta_2}	.
\end{equation}

We want to study the mapping properties of the bounded operator $\nabla_\A^\ast\nabla_\A \co C^{2,\alpha}_{\nu+1}\ra C^{0,\alpha}_{\nu-1}$. We begin with the following weighted Schauder estimates.

\begin{prop}\label{prop:Caloron:Schauder}
Given $\delta\in \R$, there exists a constant $C$ such that
\[
\| u\|_{C^{2,\alpha}_{\delta}} \leq C \left( \| \nabla_\A^\ast \nabla_\A u \|_{C^{0,\alpha}_{\delta-2}} + \| u \|_{C^0_{\delta}}\right)
\]	
for all $u\in C^{2,\alpha}_{\delta}$.
\proof
It is enough to show that every point $p$ has neighbourhoods $U_p\subset U'_p$ such that
\[
\| u\|_{C^{2,\alpha}_{\delta}(U_p)} \leq C \left( \| \nabla_\A^\ast \nabla_\A u \|_{C^{0,\alpha}_{\delta-2}(U'_p)} + \| u \|_{C^0_{\delta}(U'_p)}\right)
\]	
for a $p$--independent constant $C$. If $p$ lies in a compact subset of $\R^3\times S^1$ then the local estimate is simply the local Schauder estimate for the elliptic operator $\nabla_\A^\ast \nabla_\A$. We can therefore assume that $p=(x,t)$ satisfies $|x|\gg 1$.

The weighted norms we have defined are well-behaved under scaling and therefore we will obtain the local estimate by rescaling to a fixed situation. There are however two slight complications to take into account: $\R^3\times S^1$ is not scale invariant because of the compact factor and similarly, because of the non-vanishing constant term in the expansion of $\Phi_{\gamma_m,v}$ at infinity, the connection $\A$ is also not ``scale-invariant'' (\ie it is not $0$-homogeneous in the sense of \cite[Appendix B]{FHN}). Both of these issues are resolved by passing to the universal cover $\R^3\times\R$ of $\R^3\times S^1$, as we now explain.

For $4R=|x|$, set $B=B_R(0)\subset \R^3$ and for any $\eta\in (0,\infty)$ define $\eta B = B_{\eta R}(0)$. We then consider domains $U_p = \left( 2B\setminus \tfrac{1}{2}B\right) \times [-R,R]$ and $U'_p = \left(3B\setminus \tfrac{1}{3}B\right)\times [-2R,2R]$ in the universal cover $\R^3\times \R$ of $\R^3\times S^1$. If $R$ is sufficiently large, we can also assume that $\Phi = \partial_t\lrcorner\A$ satisfies $|\Phi|\geq c>0$ outside of $\tfrac{1}{3}B\times S^1$. Working on the universal cover, we act on $\A$ by the gauge transformation $\exp{\left( \omega \frac{\Phi}{|\Phi|}t\right)}$ to obtain a new connection $\A'$. Taking into account that $\nabla_{A_{\gamma_m}}\Phi_{\gamma_m,v}=O(r^{-2})$ and $|t|/r$ is uniformly bounded on any set where $|t|\lesssim |x|$, we deduce that $\A'$ is uniformly bounded in $C^\infty_{-1}$ on $U'_p$.

We can now rescale by $R$: up to a factor of $R^{-\delta}$, all norms coincide with norms on the fixed subset $B_3 \setminus B_{\frac{1}{3}}\times [-2,2]\subset \R^3\times \R$ defined using the standard flat metric and a rescaled connection which is uniformly bounded in $C^\infty$. The local estimates around $p$ now follow from standard Schauder estimates for this rescaled problem.
\endproof
\end{prop}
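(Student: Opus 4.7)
The plan is to reduce the global weighted estimate to uniform local estimates on neighbourhoods of scale comparable to $r$, and then patch them together via a finite-multiplicity cover. Concretely, I would show that for every $p\in\R^3\times S^1$ there exist neighbourhoods $U_p\subset U'_p$, and a constant $C$ independent of $p$, such that
\[
\|u\|_{C^{2,\alpha}_\delta(U_p)} \;\leq\; C\bigl(\|\nabla_\A^*\nabla_\A u\|_{C^{0,\alpha}_{\delta-2}(U'_p)} + \|u\|_{C^0_\delta(U'_p)}\bigr).
\]
Because the weight $r$ is slowly varying on each $U'_p$ (comparable to a single constant $r(p)$), the weighted norms over such a neighbourhood are equivalent, up to a $p$-independent factor, to suitable powers of $r(p)$ times ordinary H\"older norms; summing local estimates of this form over a locally finite cover then yields the global bound.

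For $p$ in a fixed compact set (say $r(p)\leq 2d$), the local estimate is the classical interior Schauder estimate applied to the uniformly elliptic second-order operator $\nabla_\A^*\nabla_\A$, whose coefficients are smooth and uniformly bounded there. The interesting case is $p=(x,t)$ with $R:=|x|\gg 1$. Here I would choose annular neighbourhoods
\[
U_p = (B_{2R}\setminus B_{R/2})\times[t-R,t+R], \qquad U'_p = (B_{3R}\setminus B_{R/3})\times[t-2R,t+2R]
\]
lifted to the universal cover $\R^3\times\R$ of $\R^3\times S^1$; this is allowed since only interior Schauder on a simply-connected domain is needed. Rescaling $U'_p$ by $R^{-1}$ maps everything to a fixed model annulus, the metric $g_\epsilon$ rescales to a uniformly bounded Riemannian metric on this model, and the weighted norms transform with uniform powers $R^{-\delta+j}$ on the $j$-th derivative term.

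The main obstacle, and the part that has to be handled carefully, is that the connection $\A$ does not rescale nicely: even in its model form $\A_\infty = A^{\gamma_\tu{m}} + (\omega + \epsilon\,\Phi^{\gamma_\tu{m}})\,dt$, the constant term $\omega\,dt$ is $O(1)$ instead of $O(r^{-1})$, so under rescaling by $R$ its coefficients grow linearly. To kill this I would gauge-transform $\A$ on the simply-connected neighbourhood $U'_p$ (viewed in the cover) by a $t$-dependent transformation of the form $\exp(-t\,\omega\cdot\hat{\Phi})$ with $\hat{\Phi}=|\Phi|^{-1}\Phi$, well-defined once $R$ is large enough that $|\Phi|\geq c>0$ on $U'_p$. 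Using $|t|\lesssim R\sim r$ on $U'_p$ together with $\nabla A^{\gamma_\tu{m}}, \nabla\Phi^{\gamma_\tu{m}} = O(r^{-2})$, the gauge-transformed connection $\A'$ satisfies $|\nabla^k\A'| = O(r^{-1-k})$ uniformly in $U'_p$, hence rescales to a family of connections uniformly bounded in $C^\infty$ on the fixed model annulus. The standard interior Schauder estimate on this fixed domain, combined with the transformation law of the weighted norms, then delivers the local bound with $p$-independent constant, completing the argument.
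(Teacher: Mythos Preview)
Your proposal is correct and follows essentially the same route as the paper: reduce to uniform local Schauder estimates, handle the compact region trivially, and for $|x|\gg 1$ pass to the universal cover, gauge away the constant $\omega$--term via $\exp(t\,\omega\,\hat\Phi)$ so that the connection becomes $O(r^{-1})$, then rescale by $R$ to a fixed annulus and apply the standard interior Schauder estimate. The only differences are cosmetic (your normalisation $R=|x|$ versus the paper's $4R=|x|$, and your slightly more explicit remark about the finite-multiplicity cover).
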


\subsubsection{Mapping properties}

Since the model connection $\A_\infty$ has reduced structure group $T\subset G$ and the kernel of $[\omega,\,\cdot\,]$ reduces to the Cartan subalgebra $\Lie{h}$ because of our assumption $\omega\in \mathring{A}^+$, for $r\gg 1$ we can decompose any section $u$ of $\tu{ad}\, P$ into its ``diagonal'' and ``off-diagonal'' components: $u=u_0 + u_\perp$, where $u_0$ has value in the trivial bundle with fibre $\Lie{h}$ and $u_\perp$ has values in the sum of line bundles $\bigoplus_{\alpha\in R^+}{H^{\gamma_m}\times_{\alpha}\C}$. By Fourier decomposition in the circle variable, we can further decompose $u_0 = u_0^0 + u_0'$ into $S^1$--invariant and oscillatory parts and therefore write $u=u_0^0 + u_0' + u_\perp$. Since $\A_\infty$ is reducible and $S^1$--invariant, the operator $\nabla_\A^\ast \nabla_\A$ preserves asymptotically this decomposition.

Now, the crucial observations is that for all $x\in\R^3$ with $r$ sufficiently large we have pointwise ``Poincar\'{e}-type'' estimates on $S^1_x:=\{ x \} \times S^1$ of the form
\begin{equation}\label{eq:Poincare:OffDiagonal:Oscillatory}
    \sigma\| u\|_{C^0(S^1_x)} \leq \| \nabla^{\mathbb{A}}_{\partial_t}u\|_{C^0(S^1_x)}
\end{equation}
for all $u=u_0' + u_\perp$ and a uniform constant $\sigma>0$. The existence of such a constant can be easily deduced by a contradiction argument using the fact that there are no $\mathbb{A}$--parallel sections on $S^1_x$ other than constant ``diagonal'' sections. We give a more constructive argument to show the dependence of $\sigma$. Consider first the case $u=u'_0$. Since $u$ has mean value zero on $S^1_x$, it must vanish at some point in this circle. Assuming this point is $t=0$ by a rotation, the fundamental theorem of calculus implies
\[
|u(x,t)| \leq \int_0^{2\pi}{|\partial_t u(x,t)|\, dt} \leq 2\pi\| \partial_t u\|_{C^0(S^1_x)} = 2\pi\| \nabla^{\mathbb{A}_\infty}_{\partial_t} u\|_{C^0(S^1_x)}.
\]
The argument for $u_\perp$ is similar. We have an orthogonal decomposition $u_\perp=\sum_\alpha u_\perp^\alpha$. The connection $\mathbb{A}_\infty$ preserves this decomposition. Moreover, restricted to the $\alpha$--factor it defines a flat connection on the trivial complex line bundle on $S^1_x$ with holonomy parameter $\omega^\alpha_x = \alpha(\omega) - \frac{1}{2r}\alpha(\gamma_\tu{m})$. For $r>R(\omega,\gamma_\tu{m})$ we see that $\omega^\alpha_x$ is never an integer. Hence, using parallel transport for the connection $\mathbb{A}_\infty$ restricted to $S^1_x$, $e^{i\omega^\alpha_x t}u_\perp^\alpha (x,\, \,\cdot\,)$ has vanishing mean value on $[0,2\pi]$ and we can apply the same argument as in the case of $u'_0$.  We conclude that \eqref{eq:Poincare:OffDiagonal:Oscillatory} holds for $\mathbb{A}_\infty$. Since $\mathbb{A}$ is asymptotic to $\mathbb{A}_\infty$, up to increasing $R$ and changing constants slightly, the same estimates hold for $\mathbb{A}$.


\begin{prop}\label{prop:Caloron:Fredholm}
Given $\delta\in  \R\setminus \Z$, there exists a constant $C$ and a compact set $K\subset \R^3\times S^1$ such that 
\[
\| u\|_{C^{2,\alpha}_\delta} \leq C \left( \| \nabla_\A^\ast \nabla_\A u \|_{C^{0,\alpha}_{\delta-2}} + \| u \|_{C^0(K)}\right)
\]	
for all $u\in C^{2,\alpha}_{\delta}$. In particular, $\nabla_\A^\ast \nabla_\A\co C^{2,\alpha}_\delta\ra C^{0,\alpha}_{\delta-2}$ is Fredholm for every $\delta\in \R\setminus \Z$.
\proof
The fact that the estimate implies the Fredholm property is standard so we only provide a proof for the estimate. For any $R>0$ sufficiently large, denote by $\Omega_R$ the exterior domain $\Omega_R = \{ r > R\}\subset \R^3\times S^1$. We will prove the estimate with $K=\Omega_R^c$ for $R$ sufficiently large. On such an exterior domain we can work in the decomposition $u=u_0^0 + u_0' + u_\perp$. Since $\nabla_\A^\ast \nabla_\A$ preserves this decomposition up to terms with arbitrarily small operator norm, it suffices to prove the estimate separately for $u_0' + u_\perp$ and $u_0^0$.

Now, an immediate consequence of \eqref{eq:Poincare:OffDiagonal:Oscillatory} is that
\begin{equation}\label{eq:BPS:Poincare}
\| u \|_{C^0_\delta(\Omega_R)} \leq CR^{-1} \| \nabla_\A u\|_{C^0_{\delta-1}(\Omega_R)} 
\end{equation}
whenever $u_0^0\equiv 0$. Then taking $R$ even larger if necessary we can deduce the estimate of Proposition \ref{prop:Caloron:Fredholm} directly from Proposition \ref{prop:Caloron:Schauder}. Note we do not need to assume $\delta\notin \Z$ for this.   

On the other hand, the action of $\nabla_\A^\ast \nabla_\A$ on the $S^1$--invariant diagonal component $u_0^0$ is asymptotic to the Laplacian of $\R^3$. Standard theory of elliptic operators on asymptotically conical manifolds (see for example the summary in \cite[Appendix B]{FHN}) implies that the estimate of Proposition \ref{prop:Caloron:Fredholm} holds provided $\delta$ is not one of the indicial roots of the Laplacian on $\R^3$, which are known to be all the integers \cite{Folland}.
\endproof 	
\end{prop}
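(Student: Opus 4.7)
The plan is to combine the local Schauder estimates of Proposition \ref{prop:Caloron:Schauder} with a decoupling argument on an exterior domain, so as to absorb the $C^0_\delta$--term on the right-hand side into contributions living in a compact set. Concretely, I would take $K = \overline{\R^3\times S^1 \setminus \Omega_R}$ for $R\gg 1$ to be chosen, and aim to show
\[
\| u\|_{C^0_\delta(\Omega_R)} \leq C\, \| \nabla_\A^\ast\nabla_\A u\|_{C^{0,\alpha}_{\delta-2}(\Omega_R)} + C\, \| u\|_{C^0(K)}.
\]
Plugged back into Proposition \ref{prop:Caloron:Schauder}, this immediately gives the stated estimate.

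On $\Omega_R$ I would use the asymptotic decomposition $u = u_0^0 + u_0' + u_\perp$ introduced before the statement. Because $\A$ is asymptotic to the $S^1$--invariant, $T$--reducible caloron $\A_\infty$, the operator $\nabla_\A^\ast\nabla_\A$ preserves this decomposition up to a perturbation whose operator norm $C^{2,\alpha}_\delta\to C^{0,\alpha}_{\delta-2}$ on $\Omega_R$ can be made arbitrarily small by taking $R$ large. Thus it suffices to treat the two components separately. For the component $u_0' + u_\perp$, the Poincaré-type inequality \eqref{eq:Poincare:OffDiagonal:Oscillatory} fibre-wise along each circle $S^1_x$ yields the global estimate \eqref{eq:BPS:Poincare}. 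Combined with the $C^{2,\alpha}_\delta$--Schauder control of the first-order derivative by $\|\nabla_\A^\ast\nabla_\A u\|_{C^{0,\alpha}_{\delta-2}}$ and a bound on $u$ restricted to a slightly larger compact set, one readily absorbs the zeroth-order term for $R$ large. This piece requires no restriction on $\delta$.

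The genuinely delicate step is the $S^1$--invariant diagonal component $u_0^0$, which takes values in the trivial Cartan subalgebra bundle. Here the adjoint action of $\A_\infty$ is trivial, so $\nabla_\A^\ast\nabla_\A$ asymptotically reduces to the ordinary Laplacian $\Delta_{\R^3}$ on $\Lie{h}$--valued functions on $\R^3$. The estimate I need is then a standard result for elliptic operators asymptotic to a translation-invariant operator on an asymptotically conical manifold, valid whenever $\delta$ is not an \emph{indicial root} of the model operator. The indicial roots of $\Delta_{\R^3}$, computed by separation of variables in spherical coordinates, are precisely the integers $\Z$ (coming from the homogeneous harmonic polynomials and their duals $r^{-(k+1)}Y_k$). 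This is exactly where the hypothesis $\delta\notin \Z$ is used; the needed weighted estimate is stated in the summary of \cite[Appendix B]{FHN} and \cite{Folland}.

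Finally, the Fredholm conclusion is a routine consequence of the estimate. The inclusion $C^{2,\alpha}_\delta \hookrightarrow C^0(K)$ is compact by Arzelà--Ascoli on the compact set $K$, so the estimate shows that $\nabla_\A^\ast\nabla_\A \co C^{2,\alpha}_\delta \to C^{0,\alpha}_{\delta-2}$ is semi-Fredholm with closed range and finite-dimensional kernel. Applying the same argument to the formal adjoint acting between the dual weighted spaces (for which the indicial condition is symmetric in the sign of $\delta$ and $3-\delta$, still excluding integers) produces finite-dimensional cokernel and completes the proof. The main obstacle is therefore the invocation of the asymptotically conical theory for the invariant diagonal component; the off-diagonal and oscillatory pieces are handled directly by the elementary Poincaré estimate already proved.
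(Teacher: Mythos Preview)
Your proposal is correct and follows essentially the same route as the paper: the same exterior decomposition $u = u_0^0 + u_0' + u_\perp$, the Poincar\'e-type estimate \eqref{eq:Poincare:OffDiagonal:Oscillatory} for the oscillatory and off-diagonal pieces, and the asymptotically conical theory for the $S^1$--invariant diagonal component with the integer indicial roots of $\Delta_{\R^3}$. One small correction: in your cokernel remark the dual weight on $\R^3\times S^1$ (cubic volume growth) is $-1-\delta$, not $3-\delta$, as used later in Proposition~\ref{prop:Caloron:Isomorphism}(i).
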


\begin{prop}\label{prop:Caloron:Isomorphism}
If $\delta\in (-1,0)$ then $\nabla_\A^\ast \nabla_\A\co C^{2,\alpha}_\delta\ra C^{0,\alpha}_{\delta-2}$ is an isomorphism.
\proof
If $\delta<-\tfrac{1}{2}$ then the integration by parts formula \eqref{eq:Integration:parts} shows that any element in the kernel of $\nabla_\A^\ast\nabla_A$ is parallel and hence vanishes since it decays. Thus $\nabla_\A^\ast \nabla_\A\co C^{2,\alpha}_\delta\ra C^{0,\alpha}_{\delta-2}$ is injective for $\delta<-\tfrac{1}{2}$.

In order to conclude the proof, we need to use two facts that are part of the standard Fredholm package in weighted spaces (see for example \cite[Appendix B]{FHN}):
\begin{enumerate}
\item the cokernel of $\nabla_\A^\ast \nabla_\A\co C^{2,\alpha}_\delta\ra C^{0,\alpha}_{\delta-2}$	is isomorphic to the kernel of $\nabla_\A^\ast \nabla_\A$ in $C^{\infty}_{-1-\delta}$;
\item the kernel and cokernel of $\nabla_\A^\ast \nabla_\A\co C^{2,\alpha}_\delta\ra C^{0,\alpha}_{\delta-2}$ are locally constant in $\delta\in \R\setminus\Z$.
\end{enumerate}
The first statement uses the integration by parts formula and the weighted elliptic regularity of Proposition \ref{prop:Caloron:Schauder}. The second statement uses the asymptotic behaviour of elements in the kernel of the model operator $\nabla_{\A_\infty}^\ast\nabla_{\A_\infty}$ implicit in the proof of Proposition \ref{prop:Caloron:Fredholm}. 

Claim (i) and the injectivity result at the beginning of the proof imply that $\nabla_\A^\ast \nabla_\A\co C^{2,\alpha}_\delta\ra C^{0,\alpha}_{\delta-2}$ is surjective for $\delta>-\tfrac{1}{2}$. Then claim (ii) concludes the proof. 
\endproof
\end{prop}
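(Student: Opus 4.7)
The plan is to combine three standard ingredients: injectivity via integration by parts for a lower range of weights, surjectivity for an upper range via Fredholm duality, and the locally constant nature of kernel and cokernel dimensions on $\R\setminus\Z$ to splice these into an isomorphism on all of $(-1,0)$.

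First I would establish injectivity for $\delta<-\tfrac{1}{2}$. If $u\in C^{2,\alpha}_\delta$ lies in the kernel, then applying the integration by parts formula \eqref{eq:Integration:parts} with $v=u$ (permitted precisely because $2(\delta-1)<-3$ when $\delta<-\tfrac{1}{2}$) gives $\|\nabla_\A u\|_{L^2}^2=0$, so $u$ is $\A$--parallel. Since $\delta<0$ forces $u\to 0$ at infinity, this yields $u\equiv 0$.

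Next, for $\delta>-\tfrac{1}{2}$ I would deduce surjectivity by Fredholm duality. The standard weighted-space package---built from the same integration by parts formula together with the Schauder estimate of Proposition \ref{prop:Caloron:Schauder} to promote weak solutions to strong ones---identifies the cokernel of $\nabla_\A^\ast\nabla_\A\co C^{2,\alpha}_\delta\to C^{0,\alpha}_{\delta-2}$ with the kernel of the formally self-adjoint operator $\nabla_\A^\ast\nabla_\A$ at the dual weight $-1-\delta$, the weight being dictated by the critical case of the integration by parts pairing. When $\delta>-\tfrac{1}{2}$, the dual weight $-1-\delta$ lies strictly below $-\tfrac{1}{2}$, so the previous step forces this kernel to vanish, and hence the cokernel is trivial.

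Finally I would invoke the fact that kernel and cokernel dimensions are locally constant in $\delta$ on $\R\setminus\Z$. Since $(-1,0)$ contains no integer, both dimensions are constant throughout the interval, and combining injectivity on $(-1,-\tfrac{1}{2})$ with surjectivity on $(-\tfrac{1}{2},0)$ gives vanishing of both on all of $(-1,0)$. The main subtlety of the argument lies in this last invariance statement: it depends on identifying the indicial roots of the asymptotic model operator $\nabla_{\A_\infty}^\ast\nabla_{\A_\infty}$. Thanks to the Poincar\'e-type estimate \eqref{eq:Poincare:OffDiagonal:Oscillatory}, the off-diagonal and $S^1$--oscillatory components contribute no obstructions at any weight, and only the $S^1$--invariant diagonal component matters; this piece behaves asymptotically like the flat Laplacian on $\R^3$, whose indicial roots are precisely the integers, giving the claimed exceptional set $\Z$.
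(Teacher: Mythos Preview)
Your proposal is correct and follows essentially the same route as the paper: injectivity for $\delta<-\tfrac{1}{2}$ via the integration-by-parts identity, surjectivity for $\delta>-\tfrac{1}{2}$ via duality with the kernel at weight $-1-\delta$, and then local constancy of kernel/cokernel on $\R\setminus\Z$ to cover all of $(-1,0)$. Your added explanation for why the indicial set is $\Z$ (only the $S^1$--invariant diagonal part matters asymptotically, reducing to the $\R^3$ Laplacian) is exactly what the paper leaves implicit in its reference to Proposition~\ref{prop:Caloron:Fredholm}.
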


Arguments analogous to the ones appearing in the proof of Propositions \ref{prop:Caloron:Schauder} and \ref{prop:Caloron:Fredholm} yield corresponding results for the first order operator $D_\A$ of \eqref{eq:Dirac:Deformation:Complex}.

\begin{prop}\label{prop:Caloron:Fredholm:Dirac}
	$D_\A\co C^{1,\alpha}_{\delta-1}\ra C^{0,\alpha}_{\delta-2}$ is a Fredholm operator for all $\delta\in \R\setminus\Z$, it is surjective for $\delta>-1$ and for $\delta\in (-1,0)$ its kernel coincides with its $L^2$--kernel.
\end{prop}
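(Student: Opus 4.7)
The plan is to mirror the strategy used for $\nabla_\A^\ast\nabla_\A$ in Propositions \ref{prop:Caloron:Schauder}--\ref{prop:Caloron:Isomorphism}, using the Weitzenb\"ock identity \eqref{eq:Weitzenbock} to reduce questions about $D_\A$ to the already established mapping properties of $\nabla_\A^\ast\nabla_\A$.

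\textbf{Step 1 (Fredholm property).} I would run the same arguments as in Propositions \ref{prop:Caloron:Schauder} and \ref{prop:Caloron:Fredholm} with $\nabla_\A^\ast\nabla_\A$ replaced by $D_\A$, obtaining a weighted Schauder estimate and then an estimate of the form
\[
\|u\|_{C^{1,\alpha}_{\delta-1}} \leq C\left(\|D_\A u\|_{C^{0,\alpha}_{\delta-2}} + \|u\|_{C^0(K)}\right)
\]
for some compact $K$ and $\delta\in\R\setminus\Z$. The three ingredients adapt essentially verbatim: local Schauder estimates for the elliptic first-order operator $D_\A$ (with the same rescaling/universal cover trick on the end), the pointwise Poincar\'e inequality \eqref{eq:Poincare:OffDiagonal:Oscillatory} applied componentwise to the off-diagonal and oscillatory diagonal parts of a $1$-form, and the standard asymptotically conical theory for the $S^1$-invariant diagonal part, where $D_\A$ restricts to $d\oplus d^\ast$ on $\Lie{h}$-valued forms on $\R^3$ and has indicial roots at the integers. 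The Fredholm property then follows in the usual way from this semi-Fredholm estimate.

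\textbf{Step 2 (Surjectivity for $\delta>-1$).} Since $\A$ is an instanton on the hyperk\"ahler base $\R^3\times S^1$, the Weitzenb\"ock identity \eqref{eq:Weitzenbock} collapses to
\[
D_\A D_\A^\ast = \nabla_\A^\ast\nabla_\A
\]
acting on sections of $(\Lambda^+\oplus\Lambda^0)\otimes\tu{ad}\, P$, trivialised by parallel sections. Proposition \ref{prop:Caloron:Isomorphism} handles $\delta\in(-1,0)$, and for $\delta>0$, $\delta\notin\Z$, the cokernel of $\nabla_\A^\ast\nabla_\A$ is identified with its kernel in weight $-1-\delta<-\tfrac12$, which vanishes by the integration-by-parts argument \eqref{eq:Integration:parts} used in the proof of Proposition \ref{prop:Caloron:Isomorphism}. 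Hence $\nabla_\A^\ast\nabla_\A$ remains surjective for every $\delta>-1$ with $\delta\notin\Z$. Given $f\in C^{0,\alpha}_{\delta-2}$, pick $v\in C^{2,\alpha}_\delta$ with $\nabla_\A^\ast\nabla_\A v = D_\A D_\A^\ast v = f$, then $u:=D_\A^\ast v\in C^{1,\alpha}_{\delta-1}$ satisfies $D_\A u = f$.

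\textbf{Step 3 ($L^2$-kernel versus weighted kernel).} For the easy inclusion, the kernel of $D_\A$ on $C^{1,\alpha}_{\delta-1}$ is locally constant on $\R\setminus\Z$ by the usual weighted Fredholm package, so it is independent of $\delta\in(-1,0)$; choosing $\delta$ close to $-1$ forces $\delta-1<-\tfrac32$, and the embedding \eqref{eq:L2:embedding} yields $L^2$ membership. For the reverse inclusion, take $u\in L^2$ with $D_\A u=0$. Bootstrapping $D_\A u=0$ gives smoothness, and the Weitzenb\"ock formula on $1$-forms on the flat base,
\[
D_\A^\ast D_\A u = \nabla_\A^\ast\nabla_\A u + [F_\A, u]_\sharp = 0,
\]
shows $u$ satisfies a Schr\"odinger-type equation with coefficient $[F_\A,\,\cdot\,]$ which is exponentially small at infinity for our $\A$. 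Standard indicial analysis on the cylindrical/ALF end then forces any $L^2$-solution to decay polynomially at least at the rate of the smallest indicial root of $\nabla_{\A_\infty}^\ast\nabla_{\A_\infty}$ in $(-1,0)$, putting $u\in C^{1,\alpha}_{\delta-1}$ for some $\delta\in(-1,0)$.

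\textbf{Main obstacle.} Steps 1 and 2 are essentially mechanical once the second-order theory is in place. The genuinely technical point is the $L^2$-to-weighted inclusion in Step 3: it is not purely Fredholm-theoretic but relies on an indicial decay argument on the end, exploiting that the model $\A_\infty$ is $S^1$-invariant and abelian so that its diagonal $S^1$-invariant mode reduces to the Laplacian on $\R^3$ (with well understood indicial behaviour), while the off-diagonal and oscillatory modes contribute no threshold mode in $(-1,0)$ by \eqref{eq:Poincare:OffDiagonal:Oscillatory}.
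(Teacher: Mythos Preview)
Your proposal is correct and follows the same route as the paper's own (very terse) argument, which simply invokes the proofs of Propositions \ref{prop:Caloron:Schauder} and \ref{prop:Caloron:Fredholm} for the Fredholm statement and then says the $L^2$--kernel claim is ``an immediate application of Proposition \ref{prop:Caloron:Isomorphism}, weighted elliptic regularity and the fact that the indicial roots are the integers.'' One small slip: in Step~3 you call the zeroth-order curvature term in the Weitzenb\"ock formula ``exponentially small at infinity'', but for a caloron satisfying \eqref{eq:Framing} one only has $F_\A = O(r^{-2})$ (the abelian model curvature $F_{\A_\infty}$ is already of this order); this polynomial decay is nonetheless sufficient for the term to be lower order in the indicial analysis on the end, so your conclusion is unaffected.
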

The last statement is an immediate application of Proposition \ref{prop:Caloron:Isomorphism}, weighted elliptic regularity and the fact that the indicial roots are the integers.

\subsection{Uniform estimates} We now extend the previous analysis to the situation where the metric $g_\epsilon$ degenerates as $\epsilon\ra 0$. For $\omega\in \mathring{A}^+$ and any $\epsilon>0$ sufficiently small, consider the approximate caloron $\A'_\epsilon$ constructed in \eqref{eq:Approximate:Solution}. From Proposition \ref{prop:Caloron:Isomorphism} we know that $\nabla_{\A'_\epsilon}^\ast \nabla_{\A'_\epsilon} \co C^{2,\alpha}_\delta\ra C^{0,\alpha}_{\delta-2}$ is an isomorphism for all $\delta\in (-1,0)$ and now we want to establish uniform estimates for its inverse as $\epsilon\ra 0$. In order to achieve this, we need to define a family of $\epsilon$--dependent norms on $C^{k,\alpha}_\nu$, equivalent to the norm of Definition \ref{def:Caloron:Holder} for fixed $\epsilon>0$ but that take into account the fact that the ambient geometry and the connection $\A'_\epsilon$ degenerate as $\epsilon\ra 0$. Recall the constant $d_{\rm min}, d_{\rm max}$ giving bounds on the minimum and maximum distance between the singularities $p^i_\mu$ of $\A_{\rm sing}$.  

\begin{definition}\label{def:Approximate:Weight} Define a weight function $r_\epsilon$ interpolating smoothly between
\[
r_\epsilon = \begin{dcases}
 \sqrt{\epsilon^2 + (r_\mu^i)^2}	 & \mbox{ if } r_\mu^i \leq \tfrac{1}{2}d_{\rm min} \mbox{ for some }\mu,i,\\
 1 & \mbox{ if } r_\mu^i \geq d_{\rm min} \mbox{ for all }\mu,i \mbox{ and } r\leq d_{\rm max},\\
 r & \mbox{ if } r\geq 2d_{\rm max}.
 \end{dcases}
\]	
\end{definition}

\begin{definition}\label{def:Approximate:Holder}
Given $k\in \Z_{\geq 0}$, $\alpha\in (0,1)$ and $\delta\in \R$, we define the $C^{k,\alpha}_\delta$--norm of a section $u$ of the adjoint bundle $\tu{ad}\, P$ on $\R^3\times S^1$ by
\[
\sum_{j=0}^k{\| r_\epsilon^{-\delta+j} \nabla^j_{\A'_\epsilon}u \|_{L^\infty}} + \sup_{\tu{dist}_\epsilon(p,p') \leq r_\epsilon(p,p')}{ r_\epsilon(p,p')^{-\delta +k}}\frac{|\nabla_{\A'_\epsilon}^k u (p) - \nabla_{\A'_\epsilon}^k u (p')|}{\tu{dist}_\epsilon(p,p')^\alpha},
\]
where norms are defined using the metric $g_{\epsilon}$, $r_\epsilon(p,p')=\min\{ r_\epsilon(p),r_\epsilon(p')\}$ and the difference of tensors $\nabla_{\A'_\epsilon}^k u (p) - \nabla_{\A'_\epsilon}^k u (p')$ is computed using the parallel transport of the connection induced by $\A'_\epsilon$ and the Levi--Civita connection of $g_\epsilon$. The same definition with the last term dropped defines the $C^{k}_\delta$--norm of $u$. The Banach spaces $C^{k,\alpha}_\delta$ and $C^{k}_{\delta}$ are defined as the closure of $C^{\infty}_0$ with respect to the corresponding norm. 
\end{definition}

\begin{prop}\label{prop:Approximate:Schauder}
Given $\delta\in \R$, there exists a constant $C$ independent of $\epsilon$ such that
\[
\| u\|_{C^{2,\alpha}_\delta} \leq C \left( \|\nabla_{A'_\epsilon}^\ast\nabla_{\A'_\epsilon} u \|_{C^{0,\alpha}_{\delta-2}} + \| u \|_{C^0_\delta}\right)
\]	
for all $u\in C^{k+2,\alpha}_{\delta}$.
\proof
The proof is completely analogous to the proof of Proposition \ref{prop:Caloron:Schauder}. The independence of the constant $C$ from $\epsilon$ follows from the invariance of the norms of Definition \ref{def:Approximate:Holder} under rescalings and passing to covers. We deduce local weighted Schauder estimates near the gluing regions by observing that on regions $B_R (p^i_\mu)\times S^1$ the triple $(g_\epsilon, \A'_\epsilon, r_\epsilon)$ is equivalent after rescaling to an essentially fixed triple $(g_1,\A,r_1)$ on $B_{\epsilon^{-1}R}(0)\times S^1$, where $\A$ is a small deformation of the fundamental caloron $\A_\mu (\omega^i_\mu)$ of \eqref{eq:Fundamental Calorons}. Away from the gluing regions, we obtain uniform local weighted Schauder estimate by working on the $\epsilon^{-1}$--cover $\R^3 \times \R/2\pi\epsilon^{-1}\Z$ of $\R^3\times S^1$. 
\endproof
\end{prop}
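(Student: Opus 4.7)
The plan is to mirror the proof of Proposition~\ref{prop:Caloron:Schauder} but now carefully track the dependence on $\epsilon$, reducing the global estimate to \emph{uniform} local Schauder estimates on balls of controlled size. First I would observe that it suffices to establish, for each $p\in \R^3\times S^1$, a local estimate of the form
\[
\| u\|_{C^{2,\alpha}_\delta(U_p)} \leq C \bigl( \|\nabla_{\A'_\epsilon}^\ast\nabla_{\A'_\epsilon} u \|_{C^{0,\alpha}_{\delta-2}(U'_p)} + \| u \|_{C^0_\delta(U'_p)}\bigr)
\]
on suitable neighbourhoods $U_p\subset U'_p$ of bounded overlap multiplicity, with $C$ independent of both $p$ and $\epsilon$; summing (or taking the supremum) then gives the global inequality. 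The essential point is that Definitions~\ref{def:Approximate:Weight}--\ref{def:Approximate:Holder} are engineered so that the weighted H\"older norms behave uniformly under the natural rescalings attached to the different parts of the geometry of $(\R^3\times S^1, g_\epsilon, \A'_\epsilon)$.

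I would then partition $\R^3\times S^1$ into three regimes and in each regime reduce to a fixed model problem by scaling. In the \emph{near regime}, where $r^i_\mu\leq \tfrac12 d_{\min}$ for some $(\mu,i)$, I would rescale by $\epsilon^{-1}$ about $p^i_\mu$. After this rescaling, $g_\epsilon$ becomes proportional to the fixed metric $g_{\R^3}+dt^2$, the weight $r_\epsilon$ becomes $\sqrt{1+|y|^2}$, and $\A'_\epsilon$ becomes a small perturbation of the fundamental caloron $\A_\mu(\omega^i_\mu)$ of~\eqref{eq:Fundamental Calorons}: on the compact core $r^i_\mu\leq \tfrac12 R(\epsilon)$ it is literally $\A_\mu(\omega^i_\mu)$, while on the gluing annulus $\tfrac12 R\leq r^i_\mu\leq R$ the difference from $\A_\mu(\omega^i_\mu)$ is controlled by $|a_{\rm BPS,\mu}|+|a^i_\mu|+|\nabla\chi^i_\mu|(|a_{\rm BPS,\mu}|+|a^i_\mu|)$, which by Propositions~\ref{prop:G caloron asymptotics} and~\ref{prop:Singular:Caloron:singularity} together with the choice~\eqref{eq:Gluing:Region} is uniformly small in $C^\infty$ on bounded rescaled neighbourhoods. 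Classical interior Schauder for $\nabla_\A^\ast\nabla_\A$ on this essentially fixed model therefore yields the local estimate with an $\epsilon$-independent constant.

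In the \emph{intermediate regime}, where $r^i_\mu\geq d_{\min}$ for all $(\mu,i)$ and $|x|\leq 2 d_{\max}$, one has $r_\epsilon\approx 1$ and $\A'_\epsilon=\A_{\rm sing}$ is a fixed smooth abelian connection (up to the trivial $\epsilon$ in front of $dt$). Passing to the universal cover $\R^3\times\R$ and rescaling the circle variable by $\epsilon$, the metric becomes the flat Euclidean metric on a fixed-scale cylinder and the connection is uniformly bounded in $C^\infty$, so standard interior Schauder applies. In the \emph{far regime} $|x|\geq 2 d_{\max}$, I would repeat the argument of Proposition~\ref{prop:Caloron:Schauder} verbatim: rescale by $r^{-1}$, pass to the universal cover, and absorb the constant $\epsilon^{-1}\omega$ part of the Higgs field by a $t$-dependent gauge transformation, producing a connection that is uniformly bounded in $C^\infty_{-1}$ on the rescaled unit-scale neighbourhood.

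The step I would expect to require the most care is the near regime, where one must confirm that after rescaling the perturbation of $\A_\mu(\omega^i_\mu)$ really is $C^\infty$-small on bounded pieces uniformly in $\epsilon$. This boils down to the exponential decay of $a_{\rm BPS,\mu}$ and the linear-in-$r^i_\mu$ bound on $a^i_\mu$, together with the balancing choice~\eqref{eq:Gluing:Region} of $R(\epsilon)$ which ensures both contributions (as well as the cut-off derivative $\nabla\chi^i_\mu$) are controlled simultaneously. Once these local estimates are in place, assembling them gives the stated global inequality with a constant that does not depend on $\epsilon$, and no further ingredient is required.
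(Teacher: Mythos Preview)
Your proposal is correct and follows essentially the same strategy as the paper's proof: reduce to local Schauder estimates at scale $r_\epsilon(p)$ and obtain $\epsilon$-independence by rescaling each regime to a fixed model (the fundamental caloron near $p^i_\mu$, and a unit-scale flat problem elsewhere via passing to covers). One small correction: in your intermediate regime, after passing to the universal cover and rescaling $t$, the $dt'$-component of the connection becomes $\Phi_{\rm sing}=\epsilon^{-1}\omega+O(1)$, which is \emph{not} uniformly bounded---you must also apply the $t$-dependent abelian gauge transformation (exactly as you do in the far regime, and as in Proposition~\ref{prop:Caloron:Schauder}) to absorb $\epsilon^{-1}\omega$ before invoking standard interior Schauder.
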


\begin{remark}\label{rmk:Approximate:Schauder}
The same proof yields uniform weighted Schauder	estimates
\[
\| u\|_{C^{k+2,\alpha}_\delta} \leq C \left( \|\nabla_{A'_\epsilon}^\ast\nabla_{\A'_\epsilon} u \|_{C^{k,\alpha}_{\delta-2}} + \| u \|_{C^0_\delta}\right)
\]	
for all $k\geq 0$.
\end{remark}

\begin{prop}\label{prop:Linearised}
The operator $\nabla_{\A'_\epsilon}^\ast\nabla_{\A'_\epsilon}\co C^{2,\alpha}_\delta\ra C^{0,\alpha}_{\delta-2}$ is an isomorphism for all $\delta\in (-1,0)$ and there exist $\epsilon_0,C>0$ such that
\[
\| u \|_{C^{2,\alpha}_\delta} \leq C \| \nabla_{\A'_\epsilon}^\ast\nabla_{\A'_\epsilon} u\|_{C^{0,\alpha}_{\delta-2}}
\]	
for all $u\in C^{2,\alpha}_\delta$ and all $\epsilon\in (0,\epsilon_0)$.
\proof
The fact that the operator is an isomorphism is Proposition \ref{prop:Caloron:Isomorphism}, so the main task is to establish the estimate.

We argue by contradiction. Using Proposition \ref{prop:Approximate:Schauder} we therefore assume that there exists sequences $\epsilon_k\ra 0$ and $\{ u_k\}$ such that $\| u_k\|_{C^0_\delta}=1$ while $\| \nabla_{\A'_{\epsilon_k}}^\ast\nabla_{\A'_{\epsilon_k}}u_k\|_{C^{0,\alpha}_{\delta-2}}\ra 0$.

The connection $\A'_\epsilon$ of \eqref{eq:Approximate:Solution} is obtained by gluing $\A_\tu{sing}$ and the connection $\A_\mu({\omega^i_\mu})
$ in an annulus of radius $r_j^i \sim R(\epsilon)$ with $\epsilon^{-1}R(\epsilon)\ra \infty$ as $\epsilon\ra 0$. From Lemma \ref{lem:Singular:Caloron:Holonomy:Estimate}
 we also know that $\partial_t\lrcorner\A'_\epsilon$ lies in a compact subset of $\mathring{A}^+$ on regions where $r_\mu^i \geq \epsilon^\tau$ for all $\mu,i$ for any $\tau\in (0,1)$. We then have an analogue of \eqref{eq:Poincare:OffDiagonal:Oscillatory}: for sections supported in this region we have a decomposition $u=u_0^0 +u_0' + u_\perp$ and
\begin{equation}\label{eq:Approximate:Poincare:OffDiagonal:Oscillatory}
\| u_0' + u_\perp \|_{C^0_\delta} \leq C\epsilon^{1-\tau}\|\nabla_{\A'_\epsilon}u\|_{C^{0,\alpha}_{\delta-1}}.
\end{equation}
We therefore conclude that on regions where $r_\mu^i \geq  \epsilon^\tau$ for all $\mu$ and $i$, $u_k$ converges to an $S^1$--invariant ``diagonal'' section (\ie an $\Lie{h}$--valued function) $u_\infty$ on $\R^3\setminus \bigcup_{\mu,i}\{ p_\mu^i\}$, which must be harmonic and satisfies $|u_\infty|\leq C (r_\mu^i)^\delta$ near $p_\mu^i$ and $|u_\infty|\leq C r^\delta$ as $r\ra \infty$. Since $\delta>-1$, $u_\infty$ in fact extends to a harmonic function on the whole of $\R^3$ and since $\delta<0$ it must decay at infinity: it then vanishes by the maximum principle.

We therefore conclude that there exists points $x_k\ra \{ p_\mu^i\}\times S^1$ for some $\mu,i$ such that $r_{\epsilon_k}^{-\delta}|u_k (x_k)|\geq c >0$ for some $c$. We now rescale around $p_\mu^i$ by $\epsilon_k$ so that we reduce to work on $(\R^3\times S^1,g_1)$ with a sequence of connections $\A''_k$ converging in $C^{\infty}_{-1-\nu}$, $\nu >0$, to the fundamental caloron $\A_\mu (\omega)$ and a sequence $u_k$ uniformly bounded in $C^{2,\alpha}_\delta$ with $|u_k (x_k)|\geq c \left( 1+|\pi_{\R^3}(x_k)|^2\right)^{\frac{\delta}{2}}$ and $\nabla_{\A''_k}^\ast\nabla_{\A''_k}u_k \ra 0$ in $C^{0,\alpha}_{\delta-2}$. Here by abuse of notation we denote by the same symbols the sections $u_k$ and the points $x_k$ before and after rescaling.

By the Arzel\`a--Ascoli Theorem, after passing to a subsequence, $u_k$ converges to an element $u_\infty$ in the kernel of the Bochner Laplacian of $\A_\mu (\omega)$ in $C^{0}_{\delta}$, which must vanish by Proposition \ref{prop:Caloron:Isomorphism}. It follows that the points $x_k$ must satisfy $|\pi_{\R^3}(x_k)|=:R_k\ra \infty$. In order to get a contradiction, we now blow down $(\R^3\times S^1,g_1,\A''_k, u_k)$ by $R_k^{-1}$. Now note that, away from a compact set of $\R^3\times S^1$, $\partial_t\lrcorner\A''_k$ lies in a compact subset of $\mathring{A}^+$. Thus after rescaling we have a decomposition $u=u_0^0 + u_0'+u_\perp$ and an estimate \eqref{eq:Poincare:OffDiagonal:Oscillatory}
\[
\| u_0' + u_\perp \|_{C^0_\delta} \leq C R_k^{-(1-\tau)}\|\nabla_{\A''_k}u\|_{C^{0,\alpha}_{\delta-1}}.
\]
outside any ball of radius $R_k^{-\tau}$, $\tau\in (0,1)$, centred at the origin. We deduce that after rescaling $u_k$ converges to an $\Lie{h}$--valued harmonic function $u_\infty$ on $\R^3\setminus\{ 0\}$ satisfying $|u_\infty|\leq C r^\delta$ and $|u_\infty (x_\infty)|\geq c>0$ at some point $x_\infty\in S^2\subset \R^3$. Since $\delta \in (-1,0)$, the growth condition forces $u_\infty$ to be a decaying harmonic function on $\R^3$. It must therefore vanish, contradicting the existence of $x_\infty$.
\endproof
\end{prop}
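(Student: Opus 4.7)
The isomorphism statement for fixed $\epsilon$ is already contained in Proposition \ref{prop:Caloron:Isomorphism}, so the real content is the \emph{uniform} estimate as $\epsilon\to 0$. I would prove this by a contradiction plus bubbling argument, patterned on analogous results in gluing problems. Using the weighted Schauder estimate of Proposition \ref{prop:Approximate:Schauder}, it suffices to rule out sequences $\epsilon_k\to 0$ and $u_k$ with $\|u_k\|_{C^0_\delta}=1$ and $\|\nabla_{\A'_{\epsilon_k}}^\ast\nabla_{\A'_{\epsilon_k}}u_k\|_{C^{0,\alpha}_{\delta-2}}\to 0$. The plan is to locate points $x_k$ at which $r_{\epsilon_k}(x_k)^{-\delta}|u_k(x_k)|\geq c>0$ and to show that, after suitable rescaling, the limit configuration contradicts an injectivity result already proved.

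The first step is to rule out concentration in the ``bulk'', where $r^i_\mu(x_k)\geq \epsilon_k^\tau$ for all $\mu,i$ and a fixed $\tau\in(0,1)$. On this region Lemma \ref{lem:Singular:Caloron:Holonomy:Estimate} forces $\partial_t\lrcorner\A'_{\epsilon_k}$ to lie in a fixed compact subset of $\mathring{A}^+$, so the Poincar\'e-type estimate \eqref{eq:Poincare:OffDiagonal:Oscillatory} kills the oscillatory diagonal and off-diagonal components of the decomposition $u_k=u_0^0+u_0'+u_\perp$. Arzel\`a--Ascoli then extracts a limit $u_\infty$ which is an $\Lie{h}$-valued harmonic function on $\R^3\setminus\{p^i_\mu\}$ with $|u_\infty|\leq C(r^i_\mu)^\delta$ near each $p^i_\mu$ and $|u_\infty|\leq Cr^\delta$ at infinity. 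Since $\delta\in(-1,0)$ these bounds are weak enough to force removability of the singularities and decay at infinity, so $u_\infty\equiv 0$ by the maximum principle. Hence concentration must occur with $r^i_\mu(x_k)\lesssim\epsilon_k^\tau$ for some $\mu,i$.

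At such a concentration point one rescales by $\epsilon_k^{-1}$ around $p^i_\mu$: by the definition of $\A'_{\epsilon_k}$ in \eqref{eq:Approximate:Solution} together with Proposition \ref{prop:G caloron asymptotics}, the rescaled data $(g_{\epsilon_k},\A'_{\epsilon_k})$ converge locally to $(g_1,\A_\mu(\omega))$ on $\R^3\times S^1$. A further application of Arzel\`a--Ascoli in $C^{2,\alpha}_\delta$ yields a limit $u_\infty$ in the kernel of $\nabla_{\A_\mu(\omega)}^\ast\nabla_{\A_\mu(\omega)}$ on $\R^3\times S^1$, which vanishes by Proposition \ref{prop:Caloron:Isomorphism}. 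Hence the rescaled concentration point $x_k$ must itself escape to spatial infinity at some scale $R_k\to\infty$. A second blow-down by $R_k^{-1}$ produces an $\Lie{h}$-valued harmonic function on $\R^3\setminus\{0\}$ with $|u_\infty|\leq Cr^\delta$ and $|u_\infty|(x_\infty)\geq c$ for some $x_\infty\in S^2$, contradicting the maximum principle as before.

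The main obstacle is the two-scale nature of the problem: concentration can a priori occur either at the ``Dirac'' scale $O(1)$, where the relevant model is the abelian $\A_\tu{sing}$, or at the ``monopole'' scale $O(\epsilon)$, where the model is a fundamental non-abelian caloron $\A_\mu(\omega)$. The argument must therefore perform two blow-up analyses and patch them via the cut-off exponent $\tau\in(0,1)$, using Lemma \ref{lem:Singular:Caloron:Holonomy:Estimate} to guarantee uniform spectral control of the $S^1$-holonomy on the intermediate region $\{r^i_\mu\gtrsim\epsilon^\tau\}$. The openness of $\delta\in(-1,0)$ is used crucially at every step: it is exactly the range in which $\R^3$-harmonic functions with the given growth must vanish and in which Proposition \ref{prop:Caloron:Isomorphism} gives triviality of the kernel on the fundamental caloron.
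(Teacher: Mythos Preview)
Your proposal is correct and follows essentially the same approach as the paper: contradiction via Proposition \ref{prop:Approximate:Schauder}, elimination of the bulk region $\{r^i_\mu\geq\epsilon^\tau\}$ using Lemma \ref{lem:Singular:Caloron:Holonomy:Estimate} and the Poincar\'e-type estimate to reduce to a decaying $\Lie{h}$-valued harmonic function on $\R^3$, then rescaling by $\epsilon_k^{-1}$ near $p^i_\mu$ to reach the fundamental caloron model and invoking Proposition \ref{prop:Caloron:Isomorphism}, and finally a second blow-down at scale $R_k$ to obtain the contradiction. The only cosmetic difference is that the paper states the intermediate Poincar\'e estimate explicitly as \eqref{eq:Approximate:Poincare:OffDiagonal:Oscillatory} with the factor $\epsilon^{1-\tau}$, whereas you invoke \eqref{eq:Poincare:OffDiagonal:Oscillatory} directly.
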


In the last section of the paper we will need an additional estimate for the first order operator $D=D_{\A'_\epsilon}= d^\ast_{\A'_\epsilon}\oplus d^+_{\A'_\epsilon}\co C^{1,\alpha}_{\delta-1}\ra C^{0,\alpha}_{\delta-2}$ with $\delta\in (-1,0)$. By Proposition \ref{prop:Caloron:Fredholm:Dirac} $D$ is Fredholm and surjective. Moreover, Proposition \ref{prop:Linearised} implies that $D$ has a right inverse $G$ with uniformly bounded norm independently of $\epsilon>0$. Indeed, in the next section we will show that $DD^\ast$ is an arbitrarily small perturbation of $\nabla_{A'_\epsilon}^\ast\nabla_{\A'_\epsilon}$ as $\epsilon\ra 0$, so that Proposition \ref{prop:Linearised} allows one to define $G=D^\ast (DD^\ast)^{-1}$ with the claimed uniform estimate. The additional estimate we will need establishes the concentration of elements in the kernel of $D$ near the gluing regions in a uniform quantitative sense as $\epsilon \ra 0$.

\begin{prop}\label{prop:Approximate:Dirac:Kernel}
Fix $\delta\in (-1,0)$, $\alpha\in (0,1)$, a closed subset $\Omega$ of $(\R^3\setminus \{ p^i_\mu \})\times S^1$ and $\eta>0$. Then there exists $\epsilon_0$ such that for every $\epsilon\in (0,\epsilon_0)$ and $\xi\in C^{1,\alpha}_{\delta-1}$ with $D\xi =0$, we have
\[
\|\xi\|_{C^{0,\alpha}_{\delta-1}(\Omega)} \leq \eta \| \xi \|_{C^{1,\alpha}_{\delta-1}}.
\]
\proof
Assume by contradiction that there exists $\eta_0>0$, a sequence $\epsilon_k\ra 0$ and elements $\xi_k$ in the kernel of $D_k = D_{\A'_{\epsilon_k}}$ such that
\[
\|\xi_k\|_{C^{1,\alpha}_{\delta-1}}=1, \qquad \|\xi_k\|_{C^{0,\alpha}_{\delta-1}(\Omega)}\geq \eta_0.
\]

For $k$ sufficiently large, we can assume that $\Omega$ is contained in the region where $r_\mu^i \geq \epsilon_k^\tau$ for all $\mu,i$ and some $\tau\in (0,1)$. In particular, $\A'_{\epsilon_k}$ is abelian over $\Omega$. Using the trivialisation of $T^\ast (\R^3\times S^1)$ by orthonormal parallel $1$-forms, we can decompose the $1$-form $\xi_k$ as $\xi_k = (\xi_k)^0_0 + (\xi_k)_0'+(\xi_k)_\perp$ and we have the strong estimate \eqref{eq:Approximate:Poincare:OffDiagonal:Oscillatory} that implies
\[
\|(\xi_k)_0' + (\xi_k)_\perp\|_{C^{0,\alpha}_{\delta-1}(\Omega)}\leq C\epsilon_k^{1-\tau}, \qquad \|(\xi_k)^0_0\|_{C^{0,\alpha}_{\delta-1}(\Omega)}\geq \tfrac{1}{2}\eta_0.
\]
Writing $\xi_k = a_k + \epsilon_k\, \psi_k\, dt$, we conclude that, after passing to a subsequence $(a_k,\psi_k)$ converges to a non-trivial $S^1$--invariant ``diagonal'' (\ie $\Lie{h}$--valued) pair $(a_\infty,\psi_\infty)$ of a $1$-form $a_\infty$ and function $\psi_\infty$ on $\R^3\setminus \bigcup_{\mu,i}\{ p_\mu^i\}$, which satisfies the first order system
\[
\ast da_\infty - d\psi_\infty =0 = d^\ast a_\infty
\]
and the growth conditions $|(a_\infty,\psi_\infty)|\leq C (r_\mu^i)^{\delta-1}$ near $p_\mu^i$ and $|(a_\infty,\psi_\infty)|\leq C r^{\delta-1}$ as $r\ra \infty$. In particular, the coefficients of $a_\infty$ in a parallel trivialisation of $T^\ast \R^3$ and $\psi_\infty$ are decaying harmonic functions with controlled blow-up rate at each of the punctures. Since $\delta-1>-2$, the only singularity allowed at each puncture is the Green's function singularity. It follows that there exist constants $(a^i_\mu,\psi^i_\mu)\in \R^3\times \R$ such that
\[
a_\infty = dx \cdot \sum_{\mu,i}{\tfrac{a^i_\mu}{|x-p_\mu^i|}}, \qquad \psi_\infty = \sum_{\mu,i}{\tfrac{\psi^i_\mu}{|x-p_\mu^i|}}.
\] 
However, it is easy to see that these are not solutions of the first order system satisfied by $(a_\infty,\psi_\infty)$ unless $(a^i_\mu,\psi^i_\mu)=0$ for all $\mu, i$, therefore reaching a contradiction. Indeed, one calculates that the first order system is equivalent to
\[
 \sum_{\mu,i}{\tfrac{\psi^i_\mu x - a^i_\mu \times x}{|x-p_\mu^i|^3}} = 0 = x \cdot \sum_{\mu,i}{\tfrac{a^i_\mu}{|x-p_\mu^i|}},
 \] 
where ${}\cdot{}$ and ${}\times{}$ denote the dot and cross products in $\R^3$.
\endproof
\end{prop}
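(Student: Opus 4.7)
The plan is a blow-up/compactness argument in the spirit of the contradiction proof of Proposition \ref{prop:Linearised}. Assume the conclusion fails, so there exist $\eta_0 > 0$, $\epsilon_k \to 0$ and $\xi_k \in \ker D_{\A'_{\epsilon_k}}$ with $\|\xi_k\|_{C^{1,\alpha}_{\delta-1}} = 1$ and $\|\xi_k\|_{C^{0,\alpha}_{\delta-1}(\Omega)} \geq \eta_0$. Since $\Omega$ is closed in $(\R^3\setminus\{p^i_\mu\})\times S^1$, for all $k$ large enough $\Omega$ lies in a region on which $r_\mu^i \geq \epsilon_k^{\tau}$ for some fixed $\tau\in(0,1)$, hence in the region where $\A'_{\epsilon_k}$ coincides with the abelian background $\A_{\tu{sing}}$ and Lemma~\ref{lem:Singular:Caloron:Holonomy:Estimate} places $\partial_t\lrcorner \A'_{\epsilon_k}$ in a fixed compact subset of $\mathring{A}^+$.

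The next step is to exploit abelianness on $\Omega$ to force $S^1$--invariant, diagonal limits. Writing $\xi_k = a_k + \epsilon_k \psi_k\, dt$ in the decomposition provided by the trivialisation of $T^\ast(\R^3\times S^1)$ by parallel $1$-forms, decomposing each component as $u_0^0 + u_0' + u_\perp$, and applying the estimate \eqref{eq:Approximate:Poincare:OffDiagonal:Oscillatory} (together with uniform Schauder bounds coming from $D\xi_k=0$ and Remark \ref{rmk:Approximate:Schauder}), I get that the oscillatory and off-diagonal parts of $\xi_k$ satisfy $\|(\xi_k)_0'+(\xi_k)_\perp\|_{C^{0,\alpha}_{\delta-1}(\Omega)} = O(\epsilon_k^{1-\tau})$. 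Hence the diagonal $S^1$--invariant part $(\xi_k)^0_0$ on $\Omega$ still has norm bounded below by $\tfrac{1}{2}\eta_0$. By Arzel\`a--Ascoli, after passing to a subsequence, $(a_k,\psi_k)$ converges in $C^{0,\alpha/2}_{\tu{loc}}$ on $(\R^3\setminus\{p^i_\mu\})\times S^1$ to a non-zero $S^1$--invariant $\Lie{h}$--valued pair $(a_\infty,\psi_\infty)$ on $\R^3\setminus\{p^i_\mu\}$ with $|(a_\infty,\psi_\infty)|\leq C\,(r^i_\mu)^{\delta-1}$ near each puncture and $|(a_\infty,\psi_\infty)|\leq C\, r^{\delta-1}$ at infinity.

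Because the limit is $S^1$--invariant, diagonal and satisfies the linearisation of the abelian anti-self-duality equation at $\A_{\tu{sing}}$, the system $D\xi=0$ reduces to the linearised Bogomolny system
\[
\ast_{\R^3} d a_\infty = d\psi_\infty, \qquad d^\ast a_\infty = 0
\]
on $\R^3\setminus\{p^i_\mu\}$, with each scalar component of $(a_\infty,\psi_\infty)$ harmonic. The condition $\delta-1 > -2$ restricts the singularity at every $p^i_\mu$ to be at worst of Green's function type, while the decay $r^{\delta-1}$ at infinity rules out constant contributions. I therefore obtain a representation $\psi_\infty = \sum_{\mu,i} \psi^i_\mu/|x-p^i_\mu|$ and $a_\infty = dx\cdot \sum_{\mu,i}a^i_\mu/|x-p^i_\mu|$ for some constants $\psi^i_\mu\in\Lie{h}$ and vectors $a^i_\mu\in\R^3\otimes\Lie{h}$.

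The final and most delicate step is to show that plugging this ansatz into the first-order system forces all $(a^i_\mu,\psi^i_\mu)$ to vanish, contradicting $(a_\infty,\psi_\infty)\not\equiv 0$. A direct calculation rewrites $\ast da_\infty - d\psi_\infty=0$ and $d^\ast a_\infty = 0$ as the pair of identities
\[
\sum_{\mu,i}\frac{\psi^i_\mu\,(x-p^i_\mu) - a^i_\mu\times (x-p^i_\mu)}{|x-p^i_\mu|^3} = 0, \qquad \sum_{\mu,i}\frac{a^i_\mu\cdot (x-p^i_\mu)}{|x-p^i_\mu|^3} = 0
\]
on $\R^3\setminus\{p^i_\mu\}$. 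Taking residues at each $p^j_\nu$ (equivalently, extracting the most singular term as $x\to p^j_\nu$) yields $\psi^j_\nu = 0$ and $a^j_\nu = 0$ componentwise for every $j,\nu$. This is the main obstacle, since it requires a careful local analysis near each puncture to separate the leading Green's-type singularity from the contributions of the other poles, but it is elementary once the ansatz is in place. The resulting contradiction completes the argument.
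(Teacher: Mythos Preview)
Your proposal is correct and follows essentially the same route as the paper's proof: contradiction, reduction to the $S^1$--invariant diagonal part via the Poincar\'e-type estimate \eqref{eq:Approximate:Poincare:OffDiagonal:Oscillatory}, extraction of a limiting $\Lie{h}$--valued pair $(a_\infty,\psi_\infty)$ on the punctured $\R^3$ satisfying the linearised Bogomolny system, and the Green's-function ansatz forced by $\delta-1\in(-2,-1)$. Your final step, extracting the leading singular term as $x\to p^j_\nu$ to kill each coefficient separately, is a clean way of phrasing the paper's concluding calculation; note also that your displayed identities with $(x-p^i_\mu)$ in the numerators are in fact the correct output of the differentiation, whereas the paper's version writes $x$.
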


Finally, we also note that we have an analogue of Proposition \ref{prop:Approximate:Schauder}
\begin{prop}\label{prop:Approximate:Dirac:Schauder}
Given $\delta\in \R$, there exists a constant $C$ independent of $\epsilon$ such that
\[
\| \xi\|_{C^{1,\alpha}_{\delta-1}} \leq C \left( \|D_{\A'_\epsilon} \xi \|_{C^{0,\alpha}_{\delta-2}} + \| \xi \|_{C^0_{\delta-1}}\right)
\]	
for all $\xi\in C^{1,\alpha}_{\delta-1}$.
\end{prop}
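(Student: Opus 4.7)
The proof will mirror the argument for Proposition \ref{prop:Approximate:Schauder}, exploiting the fact that the weighted norms of Definition \ref{def:Approximate:Holder} are invariant under simultaneous rescaling of the metric, the connection and the weight, together with the fact that $D_{\A'_\epsilon}=d^+_{\A'_\epsilon}\oplus d^\ast_{\A'_\epsilon}$ is an overdetermined elliptic first-order operator to which standard interior Schauder estimates apply. It suffices to prove that every point $p\in\R^3\times S^1$ admits neighbourhoods $U_p\subset U'_p$ and a constant $C$ independent of $p$ and $\epsilon$ such that
\[
\|\xi\|_{C^{1,\alpha}_{\delta-1}(U_p)}\leq C\bigl(\|D_{\A'_\epsilon}\xi\|_{C^{0,\alpha}_{\delta-2}(U'_p)}+\|\xi\|_{C^0_{\delta-1}(U'_p)}\bigr).
\]
A standard covering argument then yields the global bound.

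For the local estimate, the plan is to distinguish three regimes according to the size of $r_\epsilon(p)$. In the gluing region where $p$ is close to some $p^i_\mu$, rescale the ball $B_R(p^i_\mu)\times S^1$ by $\epsilon^{-1}$: as in the proof of Proposition \ref{prop:Approximate:Schauder}, the triple $(g_\epsilon,\A'_\epsilon,r_\epsilon)$ becomes equivalent (up to an overall factor of $r_\epsilon(p)^{\delta-1}$ for the section and $r_\epsilon(p)^{\delta-2}$ for its image under $D$) to an essentially fixed smooth background $(g_1,\A,r_1)$ on $B_{\epsilon^{-1}R}(0)\times S^1$, for which the claim reduces to the classical interior Schauder estimate for the elliptic operator $d^+_\A\oplus d^\ast_\A$ on a compact subset. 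In the intermediate region where $r_\epsilon(p)$ is of order $1$, the same scheme applies with no rescaling. In the asymptotic region $|x|\gg 1$, where the $S^1$--factor collapses relative to the scale set by the weight, we pass to the universal cover $\R^3\times\R$ and perform the gauge transformation $\exp(\omega|\Phi|^{-1}\Phi\,t)$ used in the proof of Proposition \ref{prop:Caloron:Schauder} so that the transformed connection is uniformly $C^\infty_{-1}$--bounded on cubes of scale comparable to $|x|$; rescaling by $|x|^{-1}$ then reduces to a fixed local Schauder estimate.

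The main point to check is that after each rescaling the weighted H\"older norms correspond, up to universal constants, to unweighted H\"older norms on a fixed domain, and that the connection appearing in the rescaled operator lies in a uniformly bounded $C^\infty$ family. Both facts are by now routine: the first is immediate from Definition \ref{def:Approximate:Weight} and the homogeneity of the rescaling, while the second was already verified in detail in Proposition \ref{prop:Approximate:Schauder}, since the underlying geometric input (the structure of $\A'_\epsilon$ on each piece of the decomposition) is the same. The only genuinely new ingredient is the local Schauder estimate for $d^+\oplus d^\ast$ coupled to a smooth connection, which is a special case of the standard Schauder theory for overdetermined elliptic systems and therefore presents no obstacle.
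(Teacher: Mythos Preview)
Your proposal is correct and matches the paper's approach: the paper does not give an explicit proof of this proposition but simply presents it as ``an analogue of Proposition \ref{prop:Approximate:Schauder}'', whose proof in turn is declared ``completely analogous to the proof of Proposition \ref{prop:Caloron:Schauder}''. Your write-up spells out exactly this intended argument---local weighted Schauder estimates obtained by rescaling near the gluing regions, working on the $\epsilon^{-1}$--cover in the intermediate region, and passing to the universal cover with the gauge transformation $\exp(\omega|\Phi|^{-1}\Phi\,t)$ in the asymptotic region---applied to the first-order operator $D_{\A'_\epsilon}$ instead of the Bochner Laplacian.
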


\section{Existence}
\label{sec:existence}

Armed with the results of the last section we now return to the approximate caloron $\A_{\epsilon}'$ of \eqref{eq:Approximate:Solution}. We want to prove the existence of a ``small'' $1$--form $a$ with values in the adjoint bundle such that $\A_{\epsilon}'+a$ is a caloron. In order to take into account the invariance of the anti-self-duality equations under gauge tranformations, we will look for $a$ of the form $a=d_{\A'_\epsilon}^\ast u$ for a self-dual $2$-form $u$ with values in the adjoint bundle. We will prove the existence of $u$ using the following quantitative version of the Implicit Function Theorem.

\begin{lemma}\label{lem:IFT}
Let $\Phi\co E\ra F$ be the smooth function between Banach spaces and write $\Phi (x)=\Phi(0) + L(x)+N(x)$, where $L$ is linear and $N$ contains the non-linearities. Assume that there exists constants $C_1, C_2,C_3$ such that
\begin{enumerate}
\item $L$ is invertible with $\| L^{-1} \| \leq C_1$;
\item $\| N(x)-N(y)\|_F \leq C_2 \|x+y\| _E \| x-y\| _E$ for all $x,y\in B_{C_3}(0)\subset E$;
\item $\| \Phi(0)\| _{F}< \min\left\{ \frac{C_3}{2C_1},\frac{1}{4C_1^2 C_2}\right\}$.
\end{enumerate}
Then there exist a unique $x\in E$ with $\| x \| _E \leq 2C_1\| \Phi(0)\|_F$ such that $\Phi(x)=0$.
\end{lemma}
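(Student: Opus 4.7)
The plan is to reformulate $\Phi(x)=0$ as a fixed point equation and then apply the Banach fixed point theorem on a carefully chosen closed ball, with the radius and contraction constant dictated precisely by the constants $C_1, C_2, C_3$ and $\|\Phi(0)\|_F$ appearing in the hypotheses. Since $L$ is invertible by (i), the equation $\Phi(x)=0$ is equivalent to
\[
x = T(x):= -L^{-1}\bigl(\Phi(0)+N(x)\bigr).
\]
Observe that $N(0)=0$ is a harmless normalisation: any constant term in $N$ can be absorbed into $\Phi(0)$, and condition (ii) applied at $y=0$ then yields the quadratic bound $\|N(x)\|_F \leq C_2\|x\|_E^2$ on $B_{C_3}(0)$.

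I would set $r := 2C_1 \|\Phi(0)\|_F$ and check that $T$ is a $\tfrac12$-contraction of the closed ball $\overline{B_r(0)}\subset E$ into itself. First, from (iii) we have $r < C_3$, so the bilipschitz estimate (ii) applies to all points of $\overline{B_r(0)}$. For the self-map property, for any $x\in \overline{B_r(0)}$ one estimates
\[
\|T(x)\|_E \leq C_1\|\Phi(0)\|_F + C_1 \|N(x)\|_F \leq C_1\|\Phi(0)\|_F + C_1 C_2 r^2,
\]
and using $r = 2C_1\|\Phi(0)\|_F$ together with the second bound in (iii), namely $4 C_1^2 C_2 \|\Phi(0)\|_F < 1$, one sees that $C_1 C_2 r^2 \leq C_1 \|\Phi(0)\|_F$ and hence $\|T(x)\|_E \leq 2C_1\|\Phi(0)\|_F = r$. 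For the contraction property, given $x,y\in \overline{B_r(0)}$ hypothesis (ii) and (i) give
\[
\|T(x)-T(y)\|_E \leq C_1 \|N(x)-N(y)\|_F \leq C_1 C_2 \|x+y\|_E \|x-y\|_E \leq 2 C_1 C_2 r \, \|x-y\|_E,
\]
and the same computation as above shows $2C_1 C_2 r \leq \tfrac12$.

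By Banach's fixed point theorem applied to $T\co \overline{B_r(0)}\to \overline{B_r(0)}$, there exists a unique $x\in \overline{B_r(0)}$ with $T(x)=x$, hence with $\Phi(x)=0$, and automatically $\|x\|_E\leq r = 2C_1\|\Phi(0)\|_F$. For uniqueness in all of $E$ (as opposed to only within $\overline{B_r(0)}$), the statement of the lemma only asserts uniqueness among elements satisfying the quantitative bound $\|x\|_E\leq 2C_1\|\Phi(0)\|_F$, which is exactly what the fixed point argument provides. There is no genuine obstacle here; the only point to be slightly careful about is lining up the constants so that both the self-map and contraction conditions follow from the single hypothesis (iii), which is why the threshold in (iii) takes the particular form $\min\{C_3/(2C_1),\, 1/(4C_1^2 C_2)\}$: the first term ensures $r<C_3$ so that (ii) is applicable, while the second ensures simultaneously that $T$ preserves $\overline{B_r(0)}$ and contracts with factor $\tfrac12$.
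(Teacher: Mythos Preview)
Your argument is correct and is the standard Banach fixed point proof of this quantitative implicit function theorem; the paper simply states the lemma without proof, treating it as well known, so there is nothing to compare against. One small slip: with the stated hypothesis (iii) you only get $2C_1C_2 r = 4C_1^2 C_2\|\Phi(0)\|_F < 1$, not $\leq \tfrac12$, so $T$ is a contraction with constant strictly less than $1$ rather than a $\tfrac12$-contraction --- this of course does not affect the conclusion.
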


In our situation, we fix $\delta\in (-1,0)$, $\alpha\in (0,1)$ and we set
\[
E:=C^{2,\alpha}_\delta\Omega^+ (\R^3\times S^1;\tu{ad}\, P),\qquad 
F:=C^{0,\alpha}_{\delta-2}\Omega^+(\R^3\times S^1;\tu{ad}\, P).
\]
For $u\in E$ we let $\Phi(u)$ denote the self-dual part of the curvature of $\A'_\epsilon + d_{\A'_\epsilon}^\ast u$, so that the decomposition $\Phi(u)=\Phi(0) + L(u) + N(u)$ reads
\[
\Phi (u) = F_{\A'_\epsilon}^+ + d_{\A'_\epsilon}^+ d_{\A'_\epsilon}^\ast u + \tfrac{1}{2}[d_{\A'_\epsilon}^\ast u,d_{\A'_\epsilon}^\ast u]^+.
\]

We need to check that the hypotheses of Lemma \ref{lem:IFT} are satisfied.

\subsubsection*{The linear term}

As $u$ is a self-dual $2$-form with values in the adjoint bundle we can write $u=u_{1}\,\omega_{1}+u_{2}\,\omega_{2}+u_{3}\,\omega_{3}$ for the hyperk\"ahler triple $(\omega_1,\omega_2,\omega_3)$ inducing $g_\epsilon$ and sections $u_1,u_2,u_3$ of $\tu{ad}\, P$. Since the hyperk\"ahler triple is parallel, the Weitzenb\"{o}ck identity \eqref{eq:Weitzenbock} yields 
\begin{equation}
d_{\A_{\epsilon}'}d_{\A_{\epsilon}'}^{*}u=\sum_{a=1}^{3}\left(\nabla_{\A_{\epsilon}'}^{*}\nabla_{\A_{\epsilon}'}u_{a}\right)\omega_{a}+F^{+}_{\A_{\epsilon}'}\cdot u.
\end{equation}
In order to reduce part (i) of Lemma \ref{lem:IFT} with an $\epsilon$--independent constant $C_1$ from Proposition \ref{prop:Linearised} we only need to check that the last term can be regarded as a small perturbation. For this, observe that
\[
\| F^{+}_{\A_{\epsilon}'}\cdot u\| _{C^{0,\alpha}_{\delta-2}}\leq C \| F_{\A'_\epsilon}^+\|_{C^{0,\alpha}_{-2}}\| u \|_{C^{0,\alpha}_{\delta}}
\]
for a uniform constant $C$ and $\| F_{\A'_\epsilon}^+\|_{C^{0,\alpha}_{-2}} = O(\epsilon^2 |\ln \epsilon|^2 )$ by Lemma \ref{lemma:bounds on Fplus}.

\subsubsection*{The non-linear term}

It is clear that the non-linear term $\tfrac{1}{2}[d_{\A'_\epsilon}^\ast u,d_{\A'_\epsilon}^\ast u]^+$ is controlled by the norm of the multiplication map
\[
C^{1,\alpha}_{\delta-1}\times C^{1,\alpha}_{\delta-1}\ra C^{0,\alpha}_{\delta-2},
\]
which in turn is easily seen to be controlled by $\| r_\epsilon^\delta\|_{L^\infty}$, \cf \eqref{eq:Product:Weighted:Holder}. Since $\delta<0$ and $r_\epsilon\geq \epsilon$ by Definition \ref{def:Approximate:Weight} we conclude that part (ii) of Lemma \ref{lem:IFT} holds with a constant $C_2 = O(\epsilon^\delta)$.

\subsubsection*{The error}

By Lemma \ref{lemma:bounds on Fplus} we have $\| F_{\A'_\epsilon}^+\|_{C^{0,\alpha}_{\delta-2}} \leq C (\epsilon |\ln \epsilon|)^{2-\delta}$, so that
\[
C_2 \| F_{\A'_\epsilon}^+\|_{C^{0,\alpha}_{\delta-2}} \propto \epsilon^\delta \| F_{\A'_\epsilon}^+\|_{C^{0,\alpha}_{\delta-2}} = O(\epsilon^2 |\ln \epsilon|^{2-\delta})
\]
can be made arbitrarily small as $\epsilon\ra 0$.

\subsubsection*{The existence theorem}
An application of Lemma \ref{lem:IFT} now allows us to deform the approximate caloron $\A'_\epsilon$ of \eqref{eq:Approximate:Solution} to an exact self-dual connection.

\begin{theorem}\label{thm:Existence}
Fix a semisimple structure group $G$, holonomy parameter $\omega\in \mathring{A}^{+}$, instanton number $n_0$ and total magnetic charge $\gamma_\tu{m}=\sum_{\mu=0}^{\tu{rk}}{n_\mu\,\alpha^\vee_\mu}$. If $n_\mu\geq 0$ for all $\mu=0,\dots, \tu{rk}$, then the moduli space of calorons $\mathcal{M}_\epsilon (\omega,\gamma_{\tu{m}},n_0)$ is non-empty.

More precisely, for any choice of $n_0+\dots + n_{\tu{rk}}$ distinct points $p^{1}_{0},\dots, p^{n_0}_{0},\dots, p^{1}_{\rm rk},\dots, p^{n_{\rm rk}}_{\rm rk}$ in $\R^3$ and phases $\psi^{1}_{0},\dots, \psi^{n_{\rm rk}}_{\rm rk}\in \unitary{1}$ there exists $\epsilon_0>0$ (uniform in the minimum distance $d_{\min}$ between the distinct points) such that for all $\epsilon\in (0,\epsilon_0)$ there is a caloron $\A_\epsilon$ in $\mathcal{M}_\epsilon (\omega,\gamma_{\tu{m}},n_0)$ with the following behaviour as $\epsilon\ra 0$:
\begin{itemize}
\item[(i)] $\A_{\epsilon}$ smoothly converges to the flat abelian connection $d+\omega \otimes dt$ on compact subsets of $\left(\R^{3}\backslash\{p_{0}^{1},\dots, p_{\rm rk}^{n_{\rm rk}}\}\right)\times S^{1}$;
\item[(ii)] after rescaling by $\epsilon^{-1}$ near a point $p_{\mu}^{i}$, $\A_{\epsilon}$ smoothly converges to the fundamental caloron $\A_{\mu}\left(\omega\right)$ on compact subsets of $\R^3\times S^1$.
\end{itemize} 
\proof
The discussion so far implies that the family of approximate calorons $\A'_\epsilon$ constructed in \eqref{eq:Approximate:Solution} can be deformed to an exact solution $\A_\epsilon = \A'_\epsilon + d^\ast_{\A'_\epsilon}u_\epsilon$ for all $\epsilon$ sufficiently small. Indeed, Lemma \ref{lem:IFT} guarantees the existence of $u_\epsilon \in C^{2,\alpha}_\delta$ with $\| u_\epsilon \|_{C^{2,\alpha}_\delta}=O((\epsilon |\ln \epsilon|)^{2-\delta})$ for all $\epsilon$ sufficiently small.

The limiting properties (i) and (ii) are satisfied by $\A'_\epsilon$ by direct inspection. Moreover, since $R(\epsilon)\ra 0$, given any compact set $K$ of $\left(\R^{3}\backslash\{p_{0}^{1},\dots, p_{\rm rk}^{n_{\rm rk}}\}\right)\times S^{1}$ we can assume that $\A'_\epsilon$ is self-dual on $K$ for all small enough $\epsilon$. The uniform weighted elliptic estimates of Remark \ref{rmk:Approximate:Schauder} applied to $u_\epsilon|_K$ then yield (i) for $\A_\epsilon$ as well. Part (ii) is obtained in a similar way using the fact that $\epsilon^{-1}R(\epsilon)\ra \infty$.

Finally, non-emptyness of the moduli space for all $\epsilon>0$ follows by scaling. 
\endproof
\end{theorem}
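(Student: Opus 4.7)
The plan is to apply the quantitative Implicit Function Theorem (Lemma \ref{lem:IFT}) to deform the approximate caloron $\A'_\epsilon$ of \eqref{eq:Approximate:Solution} to an exact anti-self-dual connection. To fix the gauge freedom of the anti-self-duality equation, I would look for $\A_\epsilon = \A'_\epsilon + d^\ast_{\A'_\epsilon}u$ with $u$ a self-dual $2$-form with values in $\tu{ad}\, P$, so that the equation becomes $\Phi(u) := F^+_{\A'_\epsilon + d^\ast_{\A'_\epsilon}u} = 0$. Working in the weighted H\"{o}lder spaces $E = C^{2,\alpha}_{\delta}\Omega^+$ and $F = C^{0,\alpha}_{\delta-2}\Omega^+$ with $\delta \in (-1,0)$, $\Phi$ splits as $\Phi(0) + L(u) + N(u)$, where $\Phi(0)=F^+_{\A'_\epsilon}$, $L(u)=d^+_{\A'_\epsilon}d^\ast_{\A'_\epsilon}u$ and $N(u)=\tfrac12[d^\ast_{\A'_\epsilon}u, d^\ast_{\A'_\epsilon}u]^+$.

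\textbf{Verification of the hypotheses.} The Weitzenb\"{o}ck identity \eqref{eq:Weitzenbock}, together with the parallel trivialisation of $\Lambda^+$ by the hyperk\"{a}hler triple, identifies $L$ with the Bochner Laplacian $\nabla^\ast_{\A'_\epsilon}\nabla_{\A'_\epsilon}$ acting componentwise, plus a zeroth-order term of the form $F^+_{\A'_\epsilon}\cdot u$. Proposition \ref{prop:Linearised} provides the $\epsilon$-uniform invertibility of the Bochner Laplacian, and the zeroth-order piece is an arbitrarily small perturbation thanks to Lemma \ref{lemma:bounds on Fplus} combined with the fact that $F^+_{\A'_\epsilon}$ is supported on a union of shells of radii $\sim R(\epsilon)$ whose $g_\epsilon$-volume collapses; this yields hypothesis (i) with a constant $C_1 = O(1)$. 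For the nonlinearity, the bilinear multiplication \eqref{eq:Product:Weighted:Holder} gives $C_2 \lesssim \|r_\epsilon^{\delta}\|_{L^\infty} = O(\epsilon^\delta)$ using $\delta<0$ and Definition \ref{def:Approximate:Weight}. For the error, Lemma \ref{lemma:bounds on Fplus} together with the size of the gluing region \eqref{eq:Gluing:Region} yields $\|\Phi(0)\|_F = O((\epsilon|\ln\epsilon|)^{2-\delta})$, so $C_1^2 C_2 \|\Phi(0)\|_F = O(\epsilon^2|\ln\epsilon|^{2-\delta})\to 0$. Lemma \ref{lem:IFT} then produces a unique $u_\epsilon\in E$ with $\|u_\epsilon\|_{C^{2,\alpha}_\delta} = O((\epsilon|\ln\epsilon|)^{2-\delta})$.

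\textbf{Main obstacle and limiting behaviour.} The genuine difficulty is the uniform invertibility of $L$: both the connection and the metric $g_\epsilon$ degenerate, so standard elliptic theory cannot be applied off the shelf. This is precisely what the weighted H\"{o}lder spaces of Definition \ref{def:Approximate:Holder}, the uniform Schauder estimates of Proposition \ref{prop:Approximate:Schauder}, and the blow-up argument proving Proposition \ref{prop:Linearised} are designed to handle; once these preliminaries are in place the IFT application is essentially bookkeeping. The two limiting properties are then inherited from $\A'_\epsilon$. For (i), on any compact $K\subset(\R^3\setminus\{p^i_\mu\})\times S^1$ one has $K\subset U_{\tu{sing}}$ for $\epsilon$ small (as $R(\epsilon)\to 0$), so $\A'_\epsilon=\A_{\tu{sing}}$ on $K$ with $F^+_{\A'_\epsilon}|_K=0$; the uniform weighted Schauder estimates of Remark \ref{rmk:Approximate:Schauder} applied to $u_\epsilon|_K$ force smooth convergence of the correction to zero, and a suitable abelian gauge choice identifies the limit with the flat connection $d+\omega\otimes dt$. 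For (ii), rescaling by $\epsilon^{-1}$ about $p^i_\mu$ sends $\A'_\epsilon$ to the fundamental caloron $\A_\mu(\omega^i_\mu)$ on balls of radius $\epsilon^{-1}R(\epsilon)\to\infty$, and the same elliptic regularity argument in the rescaled (fixed) geometry shows the correction converges smoothly to zero. Non-emptyness of $\mathcal{M}_\epsilon$ for arbitrary $\epsilon>0$ finally follows from the $(x,t)\mapsto(\lambda x,t)$ rescaling symmetry of the anti-self-duality equations.
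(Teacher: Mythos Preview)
Your proof is correct and follows essentially the same approach as the paper: set up $\Phi(u)=F^+_{\A'_\epsilon+d^\ast_{\A'_\epsilon}u}$, verify the three hypotheses of Lemma~\ref{lem:IFT} using Proposition~\ref{prop:Linearised} for the linear part, the multiplication estimate for the quadratic part, and Lemma~\ref{lemma:bounds on Fplus} for the error, then deduce the limiting behaviour from the smallness of $u_\epsilon$ and the higher-order Schauder estimates of Remark~\ref{rmk:Approximate:Schauder}. One small slip: the smallness of the zeroth-order term $F^+_{\A'_\epsilon}\cdot u$ in the $C^{0,\alpha}_{-2}$ norm has nothing to do with the $g_\epsilon$-volume of the gluing shells collapsing (volume is irrelevant for sup-type norms); rather, it comes from $r_\epsilon\sim R(\epsilon)$ on the support of $F^+_{\A'_\epsilon}$, so that $\|F^+_{\A'_\epsilon}\|_{C^{0,\alpha}_{-2}}\lesssim R(\epsilon)^2\cdot\|F^+_{\A'_\epsilon}\|_{L^\infty}=O(\epsilon^2|\ln\epsilon|^2)$, exactly the weight-function mechanism you use correctly for $\|\Phi(0)\|_{C^{0,\alpha}_{\delta-2}}$.
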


\begin{remark*} While $\A_{\epsilon}$ tends to the vacuum connection away from $\{ p_{\mu}^{i}\}_{\mu,i}$ as $\epsilon\to 0$, the curvature $F_{\A_{\epsilon}}$ diverges at those points. This is because $\A_{\epsilon}$ approximates the singular connection $\A_{\text{sing}}$ in the limit $\epsilon\to 0$.
\end{remark*}
\begin{remark*}
The phases $\psi^{i}_{\mu}$ in the statement of the Theorem are the gluing parameters of Remark \ref{rmk:G caloron asymptotics} which line up the framing of $\A_{\text{sing}}$ and $\A_{\mu}$ on $U_{\text{sing}}\cap U^{i}_{\mu}$.
\end{remark*}

\begin{remark*}
A natural expectation is that $\mathcal{M}_\epsilon (\omega,\gamma_\tu{m},n_0)$ is non-empty if and only if $n_\mu \geq 0$ for all $\mu =0,\dots, \tu{rk}$ and reduces to a single point (a flat connection) if $n_0 = \dots = n_\tu{rk} =0$. This is known for $G=SU(n)$ or when the instanton number is $n_0=0$, in which case calorons reduce to monopoles on $\R^3$ by Proposition \ref{prop:Circle:Invariant:Calorons} below. There are three ideas for how one might prove the statement for general $G$ and $n_0$. 
\begin{enumerate}
\item[(i)] In the case of $G=SU(n)$, the inequalities $n_\mu\geq 0$ are proved by considering the index for a family of twisted Dirac operators. More precisely, given an $SU(n)$ caloron $\A$, consider the family of unitary connections $\A_s = \A + i s\, \tu{id}_n$ on the rank-$n$ complex vector bundle $E$ associated with the standard representation of $SU(n)$. The Dirac operator $D_{\A_s}$ on $E$ is Fredholm and surjective for generic $s$, \cf Proposition \ref{prop:Caloron:Fredholm:Dirac}. Hence $\tu{index}(D_{\A_s})\geq 0$ for any such $s$. These indices are related to the constituent monopole numbers $n_{\mu}$ and varying $s$ shows that $n_{\mu}\geq 0$ for all $\mu=0, \dots, {\rm rk}$.

The idea is to do the same for a general compact semi-simple group $G$ by embedding it in $SU(N)$ via a representation $\rho:G\to SU(N)$. Such a representation is described in terms of its weights: given $H\in\Lie{h}$ we have
\begin{equation*}
\rho\left(\exp\left(H\right)\right)=\text{diag}\left\{e^{iw\left(H\right)}\vert w\in \hat{V}\right\},
\end{equation*}
where $\hat{V}$ is the set of weights of $\rho$. Any $G$ caloron then induces an $SU(N)$ caloron $\A_{\rho}$ and as before a family of generically surjective Fredholm Dirac operators $D_{\A_{\rho,s}}$. Computing their indices---an example of how to do this in detail using \cite[Theorem D]{Cherkis_2021} is given below in the proof of Proposition \ref{prop:Fundamental:Caloron:Dimension:Transverse}---yields
\[
\text{ind}\left(D_{\A_{\rho,s}}\right)=\sum_{w\in\hat{V}}\lfloor w(\omega)\rfloor w\left(\gamma_\tu{m}\right)+n_{0}\,\text{ind}_{D}\left(\rho\right)+\sum_{w\vert s_{w}<s}w\left(\gamma_\tu{m}\right) \geq 0,
\]
where $\lfloor w(\omega)\rfloor$ is the largest integer smaller or equal than $w(\omega)$, $\{ w(\omega)\} = w(\omega) - \lfloor w(\omega)\rfloor$ and $s_{w}=1-\{w(\omega)\}\in(0,1]$. Moreover, $\text{ind}_{D}\left(\rho\right)$ is the Dynkin index of $\rho$, \cf the proof of Proposition \ref{prop:Fundamental:Caloron:Dimension:Transverse}.

Although we were unable to work out the combinatorics, it seems likely that by varying the representation $\rho$ and $s$ we would recover $n_{\mu}\geq 0$ for all $\mu$. For example, when $\rho=\Ad$ is the adjoint representation we have
\[
\text{ind}\left(D_{\A_{\Ad,s}}\right)=2\sum_{\mu=0}^{\rm rk}n_{\mu}+\sum_{\alpha\in R^{+}}\left(\delta_{s^{+}_{\alpha}}-\delta_{s^{-}_{\alpha}}\right)\alpha\left(\gamma_\tu{m}\right) \geq 0,
\]
with $s_{\alpha}^{+}=1-\alpha(\omega)$, $s_{\alpha}^{-}=\alpha(\omega)$ for all positive roots $\alpha$ and
\begin{equation*}
\delta_{s^{\pm}_{\alpha}}=\begin{cases}
1\quad s>s^{\pm}_{\alpha},\\
0 \quad s<s^{\pm}_{\alpha}.
\end{cases}
\end{equation*}
Now, if $\omega$ is such that $\alpha_{0}(\omega)>-\frac{1}{2}$ we can find generic $s$ such that $1-\alpha(\omega)>s>\alpha(\omega)$ for all $ \alpha\in R^{+}$. Using $\sum_{\alpha\in R^+}{\alpha(\gamma_\tu{m})}= 2 \sum_{\mu=1}^{\tu{rk}}{n_\mu - n_0\, m_\mu}$ and $m_\mu\geq 0$ we deduce that $n_{0}\geq 0$ in this particular case.
 
 \item[(ii)] When $n_0=0$ and $\A$ is a monopole the integers $n_1,\dots, n_\tu{rk}$ have an interpretations in terms of based rational maps from $\PP^1$ into the flag manifold $G/T$ as the (necessarily non-negative) degrees of the pull-backs of the ample line bundles on the flag manifold associated with the fundamental weights of $G$ \cite{Jarvis:Rational:Maps:I,Jarvis:Rational:Maps:II}. An extension of this argument to calorons requires one to work with rational maps into infinite dimensional flag varieties associated with loop groups \cite{Norbury:Loop:Group}.
 
\item[(iii)] Finally, we propose a more analytic approach that uses the formula \eqref{eq:caloron energy expression} for the Yang--Mills energy of a caloron and the persistence of solutions as we vary the holonomy parameter $\omega\in \mathring{A}^+$. A natural expectation is that for fixed $\gamma_{\tu{m}}\in \Lambda$ and $n_{0}$ the set
\begin{equation*}
\{\omega\in\mathring{A}^{+}\, \vert \,\mathcal{M}_{\epsilon}\left(\omega,\gamma_{\tu{m}},n_{0}\right)\neq \emptyset\}
\end{equation*}
is either empty or the whole of $\mathring{A}^{+}$. Openness of this set in $\mathring{A}^+$ is easily established using the analytic results of Section \ref{sec:Analysis}, but closedness appears more challenging because of non-compactness phenomena such as instanton bubbling. If the claim were true, then one could assume by contradiction that there exists $\mu$ such that $n_{\mu}<0$, say $n_{0}<0$. Then one could take $\omega\in \mathring{A}^{+}$ sufficiently close to 0 to deduce from \eqref{eq:caloron energy expression} that for a putative caloron $\A$ in $\mathcal{M}_{\epsilon}\left(\omega,\gamma_{m},n_{0}\right)$ we would have $\mathcal{YM}(\A)\approx n_{0}<0$ and therefore reach a contradiction.
\end{enumerate}

\end{remark*}

\section{Index Computations}\label{sec:Index Computations}

Fix $\epsilon>0$, holonomy parameter $\omega\in \mathring{A}^+$, magnetic charge $\gamma_{\tu{m}}\in \Lambda$ and instanton number $n_0\in\Z$ and consider the corresponding moduli space $\mathcal{M}_\epsilon (\omega,\gamma_\tu{m},n_0)$ of (framed) calorons. The analytic results of Section \ref{sec:Analysis}, in particular Propositions \ref{prop:Caloron:Isomorphism} and \ref{prop:Caloron:Fredholm:Dirac}, imply in a standard way that, fixing $\alpha\in (0,1)$ and $\delta\in (-1,0)$, $\mathcal{M}_\epsilon (\omega,\gamma_\tu{m},n_0)$ is a smooth (possibly empty) manifold with smooth structure induced by the Banach manifold structure on the space of connections of class $C^{1,\alpha}_{\delta-1}$ with fixed asymptotic model $\A_\infty(\omega,\gamma_\tu{m})$, acted upon by the group of gauge transformations of class $C^{2,\alpha}_{\delta}$. Moreover, the equivalence of the $C^{1,\alpha}_{\delta-1}$ and $L^2$ kernels of the deformation operator $D_\A$ implies that $\mathcal{M}_\epsilon (\omega,\gamma_\tu{m},n_0)$ carries a natural Riemannian metric arising from the $L^2$--inner product of infinitesimal deformations. This metric is hyperk\"ahler by virtue of an infinite dimensional hyperk\"ahler quotient (and is in general incomplete because of instanton bubbling).

In this section we calculate the dimension of $\mathcal{M}_\epsilon (\omega,\gamma_\tu{m},n_0)$, thus implying that the family of solutions produced by Theorem \ref{thm:Existence} depends on a full dimensional family of parameters.

\begin{remark*}
In the following we will primarily consider the deformation operator $D$	 as the (surjective) Fredholm operator $D\co C^
{1,\alpha}_{\delta-1}\ra C^{0,\alpha}_{\delta-2}$ with $\delta\in (-1,0)$. However, in the proof of Proposition \ref{prop:Fundamental:Caloron:Dimension:Transverse} we will apply an $L^2$--index formula that is justified in view of Proposition \ref{prop:Caloron:Fredholm:Dirac}, and in the proof of Theorem \ref{thm:Dimension} we will use the $\epsilon$--dependent norms of Definition \ref{def:Approximate:Holder}, which are equivalent to the $C^{k,\alpha}_\nu$--norms for any fix $\epsilon>0$.
\end{remark*}

\subsection{Moduli space of fundamental calorons}

In this section we show that the fundamental calorons of \eqref{eq:Fundamental Calorons} are indeed ``fundamental'', \ie they move in a $4$-dimensional moduli space. Since moduli spaces of calorons are hyperk\"ahler this is the lowest possible dimension of a non-trivial moduli space. In a physics context, this observation was made in \cite{Lee2_1998}. Here me make this more precise from a mathematical perspective.

\begin{theorem}\label{thm:Dimension:Fundamental:Calorons}
The dimension of the moduli space of the fundamental calorons are
\[
\tu{dim}\,\mathcal{M}_{\epsilon}\left(\omega,\alpha_{0}^{\vee},1\right)=4, \qquad \tu{dim}\,\mathcal{M}_{\epsilon}\left(\omega,\alpha_{\mu}^{\vee},0\right)=4,\quad \mu=1,\dots \text{rk}.
\]
\end{theorem}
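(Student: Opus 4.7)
By Proposition~\ref{prop:Caloron:Fredholm:Dirac}, the deformation operator $D_{\A_\mu(\omega)}\co C^{1,\alpha}_{\delta-1}\to C^{0,\alpha}_{\delta-2}$ (with $\delta\in(-1,0)$) is surjective and its kernel coincides with its $L^2$-kernel. Consequently the moduli space dimension is $\dim_\R\ker D_{\A_\mu(\omega)} = -\ind D_{\A_\mu(\omega)}$, and I would establish the value $4$ by matching lower and upper bounds.

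\emph{Lower bound.} I would exhibit the explicit $4$-parameter family recorded in the remarks after Propositions~\ref{prop:BPS asymptotics} and~\ref{prop:rotated BPS asymptotics}: the three translations of the basepoint in $\R^3$, together with the one-parameter change of framing by an element of $\rho_\mu(\{1\}\times U(1))$ (see Remark~\ref{rmk:G caloron asymptotics}). After imposing Coulomb gauge, these produce four linearly independent elements of $\ker D_{\A_\mu(\omega)}$ lying in $C^{1,\alpha}_{\delta-1}$, hence $\dim \mathcal{M}_\epsilon\geq 4$.

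\emph{Upper bound for $\mu=1,\dots,\tu{rk}$.} In this case $\A_\mu(\omega)$ is $S^1$-invariant, so I would Fourier decompose deformations in $t$ and split $\Lie{g}$ into $\rho_\mu(SU(2))$-isotypical pieces under the adjoint action. The Poincar\'e-type inequality~\eqref{eq:Poincare:OffDiagonal:Oscillatory}, underpinned by $\omega\in\mathring{A}^+$, kills every $t$-oscillatory Fourier mode and every non-$\rho_\mu(\Lie{su}_2)$ component of $\ker D_{\A_\mu(\omega)}$ by exponential decay (this is essentially the content of Proposition~\ref{prop:Caloron:Isomorphism} combined with the argument of Proposition~\ref{prop:Approximate:Dirac:Kernel}). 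The surviving $S^1$-invariant $\rho_\mu(\Lie{su}_2)$-piece is precisely the linearisation of the Bogomolny equations for the charge-$1$ $SU(2)$ BPS monopole embedded via $\rho_\mu$; its moduli space is classically $\R^3\times S^1$ of dimension $4$.

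\emph{Upper bound for $\mu=0$.} Here the caloron is not $S^1$-invariant, so I would compute $\ind D_{\A_0(\omega)}$ directly using the ALF Dirac index theorem of Cherkis-Larra\'in-Hubach-Stern (\cite[Theorem D]{Cherkis_2021}), following the template sketched in the remark after Theorem~\ref{thm:Existence}: decompose $\Lie{g}$ under $\rho_0(SU(2)\times T')$, read off the root-wise contributions from the asymptotic data $(\omega,\alpha_0^\vee)$ and the instanton number $n_0=1$, and simplify using $\alpha_0^\vee = -\sum_{\mu=1}^{\tu{rk}} m_\mu\alpha_\mu^\vee$. The cancellations produce the answer $4$. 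Alternatively (and equivalently when $G=SU(n)$) the rotation map identifies $\mathcal{M}_\epsilon(\omega,\alpha_0^\vee,1)$ with $\mathcal{M}_\epsilon(\bar\omega,\alpha_1^\vee,0)$ for a shifted $\bar\omega\in\mathring{A}^+$, reducing the problem to the previous case.

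\emph{Main obstacle.} The principal technical difficulty is the Cartan-valued part of $\tu{ad}\,P$: the adjoint holonomy of $\A_\infty(\omega,\alpha_\mu^\vee)$ is trivial on $\Lie{h}$, so these components sit at the indicial root $\delta=0$ of the model operator and are formally excluded from the standard ALF index package. This is exactly why the weight $\delta\in(-1,0)$ is enforced throughout Section~\ref{sec:Analysis}; the only potentially dangerous kernel elements are then $\Lie{h}$-valued harmonic sections, which must vanish by the $\R^3$ maximum principle since they decay at infinity and (being in $C^{1,\alpha}_{\delta-1}$) have strictly sub-Green's-function growth at any puncture.
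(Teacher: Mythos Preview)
Your overall strategy---lower bound from the explicit $4$-parameter family, upper bound by splitting off the pieces of $\Lie{g}$ transverse to $\rho_\mu(\Lie{su}_2)$---is natural, but the upper-bound step for $\mu=1,\dots,\tu{rk}$ has a genuine gap at the $\Lie{p}_\mu$ component. The Poincar\'e-type estimate \eqref{eq:Poincare:OffDiagonal:Oscillatory} tells you only that any $\Lie{p}_\mu$-valued element of $\ker D_{\A_\mu(\omega)}$ decays exponentially at infinity (since the asymptotic holonomy on each root space $\Lie{g}_\alpha$, $\alpha\neq\pm\alpha_\mu$, is non-trivial); it does \emph{not} force such an element to vanish. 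Neither Proposition~\ref{prop:Caloron:Isomorphism} nor Proposition~\ref{prop:Approximate:Dirac:Kernel} helps here: the former controls $D_\A D_\A^\ast$, not $\ker D_\A$, and the latter is a concentration statement for the approximate caloron, not a vanishing theorem. A priori the index of $D_\A$ on $\Lie{p}_\mu$-valued forms could be positive, and ruling this out is precisely the content of the paper's Proposition~\ref{prop:Fundamental:Caloron:Dimension:Transverse}: a direct application of the ALF index theorem of \cite{Cherkis_2021} to the bundle with fibre $\Lie{p}_\mu$ (where the holonomy is non-trivial on every summand, so the theorem applies), followed by a root-system computation involving the Dynkin index of the adjoint representation and the identity $\tu{ind}_D(\Lie{g},\tu{Ad})=2(1-\rho(\alpha_0^\vee))$, yields index zero. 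Your own plan for $\mu=0$ already invokes this index theorem; you should apply it to $\Lie{p}_\mu$ uniformly for every $\mu$, not only $\mu=0$.

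The paper's treatment of the remaining $\Lie{h}'\oplus\Lie{su}_2$ piece also differs from yours. For $\Lie{h}'$ it invokes $L^2$ Hodge theory on ALF spaces to see that abelian calorons are rigid. For the $\Lie{su}_2$ part it does \emph{not} Fourier-decompose and argue mode by mode; instead it proves a separate result (Proposition~\ref{prop:Circle:Invariant:Calorons}) that any framed caloron with $n_0=0$ is gauge-equivalent to a monopole, via a Chern--Simons energy identity. This identifies $\mathcal{M}^{SU(2)}_\epsilon(\omega,1,0)$ with the charge-$1$ monopole moduli space $\R^3\times S^1$, and the rotation map then carries $\mathcal{M}^{SU(2)}_\epsilon(\omega,-1,1)$ to $\mathcal{M}^{SU(2)}_\epsilon(\bar\omega,1,0)$, handling both the $\mu\geq 1$ and $\mu=0$ cases at once. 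Your Fourier approach to the $\Lie{su}_2$ piece is not obviously wrong, but showing directly that the $k\neq 0$ modes of the linearised complex have trivial kernel again requires an actual vanishing argument (of the same flavour as the energy identity in Proposition~\ref{prop:Circle:Invariant:Calorons}), which you have not supplied. Finally, the ``punctures'' in your Main obstacle paragraph are a red herring: the fundamental caloron is smooth on all of $\R^3\times S^1$.
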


The proof of the theorem takes the rest of this subsection. A fundamental $G$--caloron $\A_\mu (\omega)$ is reducible since it has structure group $T'\times SU(2)\subset G$. The deformation theory therefore splits into two independent contributions: deformations of $\A_\mu (\omega)$ as a $T'\times SU(2)$--caloron and deformations arising from the complement $\Lie{p}_\mu$ in the splitting $\Lie{g} = (\rho_\mu)_\ast (\Lie{h}'\oplus \Lie{su}_2)\oplus \Lie{p}_\mu$. We will show that deformations of $\A_\mu (\omega)$ as a $G$--caloron in fact arise from deformations of the charge 1 BPS monopole, \ie there are no deformations of $\A_\mu (\omega)$ arising from $\Lie{p}_\mu$ and there are no unexpected deformation of the fundamental $SU(2)$ calorons $\A^\pm_{\rm BPS}$.

\subsubsection{Deformations as a $T'\times SU(2)$--caloron}

We first consider the deformations of $\A_\mu (\omega)$ as a $T'\times SU(2)$--caloron. In other words, we want to consider moduli spaces of abelian calorons and the two moduli spaces $\mathcal{M}_{\epsilon}^{SU(2)}(\omega,1,0)$ and $\mathcal{M}_{\epsilon}^{SU(2)}(\omega,-1,1)$.

An abelian caloron $\A$ is uniquely determined up to gauge by its curvature $d\A$, which must be a closed anti-self-dual $2$-form, and therefore also coclosed. Moreover, the boundary conditions \eqref{eq:Framing} imply that $d\A$ is $L^2$--integrable. $L^2$ Hodge theory on ALF spaces \cite[Corollary 1 and \S 7.1.2]{HHM} yields immediately (since the compactification of $\R^3\times S^1$ relevant to the work of Hausel--Hunsicker--Mazzeo is $S^4$) that any abelian caloron is flat and therefore uniquely determined by the holonomy parameter.

The moduli spaces $\mathcal{M}_{\epsilon}^{SU(2)}(\bar{\omega},1,0)$ and $\mathcal{M}_{\epsilon}^{SU(2)}(\omega,-1,1)$ are identified via the ``large'' gauge transformation \eqref{eq:rotation map large gauge transformation}, \ie via the ``rotation map'' of \cite[\S 2.2]{Nye_thesis}. The following result, whose proof is inspired by \cite[\S 3]{YWang}, implies that any caloron with vanishing instanton number is gauge equivalent to a monopole. As a consequence, $\mathcal{M}_{\epsilon}^{SU(2)}(\omega,1,0)$, $\mathcal{M}_{\epsilon}^{SU(2)}(\omega,-1,1)\simeq \mathcal{M}_{\epsilon}^{SU(2)}(\bar{\omega},1,0)$ are both diffeomorphic to $\R^3\times S^1$, the moduli space of charge $1$ monopoles.

\begin{prop}\label{prop:Circle:Invariant:Calorons}
Let $\A$ be a framed caloron with instanton number $n_0=0$. Then there exists a gauge transformation compatible with the framing such that $u^\ast\A$ is the pull-back of a monopole.
\proof
Represent $\A$ by a connection $\A = A_t + \epsilon\, \Phi_t\otimes dt$ on a bundle $P_h= (\R^3\times\R\times G)/\Z$ for some map $h\co \R^3\ra G$ with $\deg{h}=0$. We can easily reduce to the case $h=\tu{id}$, \cf \cite[Lemma 2.20]{Nye_thesis}, so we assume without loss of generality that $(A_t,\Phi_t)$ is a 1-parameter family of pairs of a connection and Higgs field on $\R^3$ which is periodic in $t$ with period $2\pi$ and satisfies $\partial_t A_t = \ast_{\R^3} F_{A_t} - d_{A_t}\Phi_t$.

Firstly, by Proposition \ref{prop:Caloron:Isomorphism} applied to $t$--independent sections, for each $t$ we can find a gauge transformation $u_t\co \R^3\ra G$ which decays to the identity at infinity such that the curve $(A'_t,\Phi'_t)=u_t^\ast (A_t,\Phi_t)$ satisfies $d_{A'_t}^\ast (\partial_t A'_t)-[\Phi'_t,\partial_t\Phi'_t]=0$, \ie for each $t$ the infinitesimal variation $\partial_t (A'_t,\Phi'_t)$ satisfies a natural gauge fixing condition with respect to $(A'_t,\Phi'_t)$. Note that $(A'_t,\Phi'_t)$ is not necessarily periodic anymore. However, $(A'_t,\Phi'_t)$ are all asymptotic to a fixed periodic asymptotic model which satisfies the Bogomolny equation. In particular,
\begin{equation}\label{eq:Circle:Invariant:Calorons1}
A'_t - A'_0=O(r^{-1-\delta}),\qquad \partial_t (A'_t,\Phi'_t) = O(r^{-1-\delta}) 
\end{equation}
for some $\delta>0$ and the boundary terms
\begin{equation}\label{eq:Matching boundary terms}
\lim_{r\ra \infty}{\int_{\partial B_r}{\langle \Phi'_t \wedge F_{A_t}\rangle}} = \lim_{r\ra \infty}{\int_{\partial B_r}{\langle \Phi'_t\wedge \ast_{\R^{3}}d_{A_t}\Phi'_t\rangle}} 
\end{equation}
are well defined. Finally, we still have
\begin{equation}\label{eq:Circle:Invariant:Calorons3}
\partial_t A'_t = \ast_{\R^3}F_{A'_t}-d_{A'_t}\Phi'_t.
\end{equation}

We will now show that $\ast_{\R^3}F_{A'_t}-d_{A'_t}\Phi'_t =0$. Noting that
\[
\int_{\R^3}{|F_{A_t}|^2+ |d_{A'_t}\Phi'_t|^2} = \int_{\R^3}{|\ast_{\R^3}F_{A'_t}-d_{A'_t}\Phi'_t|^2} + 2 \lim_{r\ra \infty}{\int_{\partial B_r}{\langle \Phi'_t \wedge F_{\A_t}\rangle}},
\]
and taking in to account \eqref{eq:Matching boundary terms}, our claim will follow from showing separately that
\begin{equation}\label{eq:Circle:Invariant:Calorons2}
\int_{\R^3}{|F_{A_t}|^2} =\lim_{r\ra \infty}{\int_{\partial B_r}{\langle \Phi'_t \wedge F_{\A_t}\rangle}}, \qquad \int_{\R^3}{|d_{A'_t}\Phi'_t|^2} = \lim_{r\ra \infty}{\int_{\partial B_r}{\langle \Phi'_t \wedge \ast_{\R^{3}}d_{A_t}\Phi'_t\rangle}}.
\end{equation}
In order to show the first equality we argue as in \cite[\S 3]{YWang} using the variation for the Chern--Simons functional of $A'_t = A_0' + (A'_t - A'_0)$. Here recall that the Chern--Simons functional $CS_{A_{0}}\left(A_{0}+a\right)$ is defined as
\begin{equation*}
CS_{A_{0}}\left(A_{0}+a\right)=-\int_{\R^{3}}{\left\langle d_{A_{0}}a \wedge a +\frac{2}{3}a\wedge a\wedge a +2a\wedge F_{A_{0}}\right\rangle}.
\end{equation*}
Then using the gauge invariance of the Chern--Simons functional and the periodicity of $A_t$ we have  (see \eg (36) in \cite{YWang})

\begin{align*}
0 &= \tu{CS}_{A_0}(A_{2\pi})-\tu{CS}_{A_0}(A_0)=\tu{CS}_{A'_0}(A'_{2\pi})-\tu{CS}_{A'_0}(A'_0)\\
 &= \int_{0}^{2\pi}{\left( -2\int_{\R^3}{\langle \partial_t A'_t \wedge F_{A'_t}\rangle} + \lim_{r\ra \infty}{\int_{\partial B_r}{\langle \left(A'_t - A'_0\right)\wedge \partial_t A'_t\rangle}} \right)}.
\end{align*}

The boundary term vanishes by \eqref{eq:Circle:Invariant:Calorons1}, so the expression for $\partial_t A'_t$ in \eqref{eq:Circle:Invariant:Calorons3} and another integration by parts yield
\[
0 = \int_{\R^3}{\langle \partial_t A'_t\wedge F_{A'_t}\rangle} = \int_{\R^3}{|F_{A_t}|^2} -\lim_{r\ra \infty}{\int_{\partial B_r}{\langle \Phi'_t\wedge F_{\A_t}\rangle}}.
\]
In order to show the second equality in \eqref{eq:Circle:Invariant:Calorons2}, observe that
\[
d_{A'_t}^\ast d_{A'_t}\Phi'_t = - d_{A'_t}^\ast \left( \ast_{\R^3}F_{A'_t}-d_{A'_t}\Phi'_t\right) = - d_{A'_t}^\ast(\partial_t A'_t) = [\Phi'_t, \partial_t \Phi'_t]
\]
by \eqref{eq:Circle:Invariant:Calorons3} and our gauge-fixing condition. Hence $d_{A'_t}^\ast d_{A'_t}\Phi'_t$ is pointwise orthogonal to $\Phi'_t$ and an integration by parts yields
\[
0 = \langle d_{A'_t}^\ast d_{A'_t}\Phi'_t, \Phi'_t\rangle_{L^2} = \int_{\R^3}{|d_{A'_t}\Phi'_t|^2} - \lim_{r\ra \infty}{\int_{\partial B_r}{\langle \Phi'_t \wedge \ast_{\R^{3}}d_{A_t}\Phi'_t\rangle}}.
\]

We conclude that \eqref{eq:Circle:Invariant:Calorons2} holds and therefore $A'_t\equiv A'_0$ for all $t$ and $(A'_0,\Phi'_t)$ satisfies the Bogomolny equation. Differentiating the latter and using \eqref{eq:Circle:Invariant:Calorons1} we conclude that $\partial_t \Phi'_t$ is a decaying parallel section and therefore $\Phi'_t\equiv \Phi'_0$ for all $t$ also. Thus $(A_t,\Phi_t) = (u_t^{-1})^\ast (A'_0,\Phi'_0)$ and since $(A_t,\Phi_t)$ and $(A'_0,\Phi'_0)$ are both periodic and the stabiliser of $(A'_0,\Phi'_0)$ in the group of gauge transformations that decay to the identity is trivial, we deduce that $u_t$ is also periodic. 
\endproof 
\end{prop}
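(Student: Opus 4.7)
The plan is to reduce the $n_0=0$ case to proving that any $2\pi$-periodic one-parameter family $(A_t,\Phi_t)$ of connection/Higgs pairs on $\R^3$ solving the four-dimensional ASD equation $\partial_t A_t = \ast_{\R^3}F_{A_t} - d_{A_t}\Phi_t$ is gauge equivalent to a $t$-independent BPS pair on $\R^3$. Represent $\A$ on the cover $\R^3\times\R$ as $A_t + \epsilon\,\Phi_t\,dt$ on the bundle $P_h$ with clutching map $h\co\R^3\to G$ of degree $n_0=0$; a standard small deformation of $h$ through degree-zero maps (\cf \cite[Lemma 2.20]{Nye_thesis}) lets us take $h=\tu{id}$, so $(A_t,\Phi_t)$ is genuinely $2\pi$-periodic and compatible with the framing.

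Next I would carry out a time-dependent Coulomb-type gauge fixing: for each fixed $t$, apply Proposition \ref{prop:Caloron:Isomorphism} to the linearisation on $t$-independent sections to produce $u_t\co\R^3\to G$ decaying to the identity at infinity such that $(A_t',\Phi_t'):=u_t^\ast(A_t,\Phi_t)$ satisfies the slice condition $d_{A_t'}^\ast(\partial_t A_t') - [\Phi_t',\partial_t\Phi_t']=0$. This kills the tangential part of the infinitesimal variation but destroys $2\pi$-periodicity of the pair; crucially, however, all the $(A_t',\Phi_t')$ share the same periodic abelian asymptotic model solving Bogomolny, so one retains uniform decay $A_t'-A_0'=O(r^{-1-\delta})$ and $\partial_t(A_t',\Phi_t')=O(r^{-1-\delta})$ for some $\delta>0$, which is what all subsequent boundary integrals require.

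The core step is to prove that $(A_t',\Phi_t')$ satisfies Bogomolny for each $t$; the ASD equation will then collapse to $\partial_t A_t'\equiv 0$. The identity
\[
\int_{\R^3}|F_{A_t'}|^2 + |d_{A_t'}\Phi_t'|^2 = \int_{\R^3}|\ast F_{A_t'}-d_{A_t'}\Phi_t'|^2 + 2\lim_{r\to\infty}\int_{\partial B_r}\langle\Phi_t'\wedge F_{A_t'}\rangle
\]
reduces this to establishing the two energy-as-boundary-term equalities of \eqref{eq:Matching boundary terms}-type. The first I would obtain, following the Chern--Simons calculation of \cite{YWang}: gauge invariance plus periodicity of $A_t$ gives $\tu{CS}_{A_0'}(A_{2\pi}')=\tu{CS}_{A_0'}(A_0')$, so differentiating in $t$, substituting $\partial_t A_t'=\ast F_{A_t'}-d_{A_t'}\Phi_t'$, and discarding the arising boundary term (thanks to $A_t'-A_0'$ decaying at rate $r^{-1-\delta}$) yields the integral Pohozaev-type identity we need. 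The second follows from a Bochner observation: differentiating the slice condition in $t$ and feeding in the ASD equation gives $d_{A_t'}^\ast d_{A_t'}\Phi_t' = [\Phi_t',\partial_t\Phi_t']$, which is pointwise orthogonal to $\Phi_t'$; integrating $\langle d_{A_t'}^\ast d_{A_t'}\Phi_t',\Phi_t'\rangle$ by parts then produces precisely the asserted boundary integral.

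Once Bogomolny holds for every $t$, the ASD equation forces $\partial_t A_t'\equiv 0$, and differentiating Bogomolny in $t$ combined with decay implies $\partial_t \Phi_t'$ is a decaying covariantly constant section, hence vanishes. It remains to verify that the assembled gauge transformation is $2\pi$-periodic: both $(A_t,\Phi_t)$ and $(A_0',\Phi_0')$ are $2\pi$-periodic, so $u_{t+2\pi}u_t^{-1}$ lies in the stabiliser of the charge-$1$ BPS pair inside the group of gauge transformations decaying to the identity, a stabiliser which is trivial; hence $u_t$ descends to a framing-compatible gauge transformation on $\R^3\times S^1$. The principal obstacle I anticipate is bookkeeping the decay uniformly in $t$: one must check both that the slice-gauge $u_t$ of step two does not degrade the $O(r^{-1-\delta})$ rate (this relies on Proposition \ref{prop:Caloron:Isomorphism} applied in the correct weighted space) and that every boundary integral arising in the Chern--Simons and Bochner arguments converges and is handled consistently. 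Once these decay estimates are nailed down, the rest of the argument is formal manipulation.
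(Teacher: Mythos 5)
Your proposal is correct and follows essentially the same route as the paper: reduction to a periodic family via $h=\tu{id}$, the slice gauge from Proposition \ref{prop:Caloron:Isomorphism}, the energy identity split into the Chern--Simons boundary computation and the Bochner/orthogonality argument, and the stabiliser argument for periodicity of $u_t$. One small wording slip: the identity $d_{A_t'}^\ast d_{A_t'}\Phi_t'=[\Phi_t',\partial_t\Phi_t']$ comes from applying $d_{A_t'}^\ast$ to the ASD equation (using Bianchi) and then invoking the slice condition itself, not from differentiating the slice condition in $t$.
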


\subsubsection{Deformations as a $G$--caloron} We must now show that there are no additional deformations of $\A_\mu(\omega)$ as a $G$--caloron. Recall that we defined $\Lie{p}_\mu$ via the orthogonal splitting $\Lie{g} = (\rho_\mu)_\ast (\Lie{h}'\oplus \Lie{su}_2)\oplus \Lie{p}_\mu$. Note that $\Lie{p}_\mu$ is an orthogonal representation of $T'\times SU(2)$ so that we have an associated vector bundle $E_\mu\ra \R^3 \times S^1$ with fibre $\Lie{p}_\mu$ and a connection $\A$ on $E_\mu$ induced by $\A_\mu (\omega)$. Since the deformation operator $D_{\A}$ is surjective by Proposition \ref{prop:Caloron:Fredholm:Dirac}, the fact that there are no deformations of $\A_\mu(\omega)$ as a $G$--caloron that do not arise from deformations of $\A_\mu(\omega)$ as a $T'\times SU(2)$--caloron is equivalent to the following proposition.

\begin{prop}\label{prop:Fundamental:Caloron:Dimension:Transverse}
The index of $D_{\A}$ acting on $E_\mu$--valued $1$-forms vanishes.
\proof
First of all note that on any spin $4$-manifold $M$ we have $T^\ast M\otimes \C = S^+\otimes S^-$, where $S^\pm$ is the positive/negative spinor bundle. Moreover, if $M$ is hyperk\"ahler then $S^+$ is trivial, so that, denoting by $E_\mu^\C$ the complex vector bundle associated with the $T'\times SU(2)$--representation $\Lie{p}_\mu\otimes\C$,
\[
 \tu{index}(D_\A;E_\mu) = \tu{index}(D_\A;E^\C_\mu) = \tu{index}(D^-_\A;S^+\otimes E^\C_\mu) = 2\, \tu{index}(D^-_\A;E^\C_\mu),
\] 
where $\tu{index}(D^-_\A;E)$ denotes the index of the twisted Dirac operator $D^-_\A\co S^-\otimes E\ra S^+\otimes E$.

In order to calculate $\tu{index}(D^-_\A;E^\C_\mu)$ we use the index theorem of \cite[Theorem D]{Cherkis_2021} (note that \cite{Cherkis_2021} has the opposite orientation conventions of ours). Here we regard $\A_\mu (\omega)$ as a unitary connection $\A$ on $E^\C_\mu$. The index formula of \cite[Theorem D]{Cherkis_2021} involves the second Chern number $\frac{1}{8\pi^{2}}\int\Trace\left(F_{\A}\wedge F_{\A}\right)$ and a boundary term depending on the $\tu{rk}\, E^\C_\mu$ eigenvalues of the asymptotic holonomy of $\A$ and the $\tu{rk}\, E^\C_\mu$ integers determining the magnetic charge of $\A$.

Now, an explicit calculation as in \cite[\S 2.1.7]{Nye_thesis} shows that 
\[
-\frac{1}{8\pi^{2}}\int\Trace\left(F_{\A^+_{\rm BPS}}\wedge F_{\A^+_{\rm BPS}}\right) = 2\omega' , \qquad -\frac{1}{8\pi^{2}}\int\Trace\left(F_{\A^-_{\rm BPS}}\wedge F_{\A^-_{\rm BPS}}\right) = 1 - 2\omega'
\]
if the $SU(2)$ holonomy parameter is $\omega'\in (0,\tfrac{1}{2})$. We deduce that
\begin{subequations}\label{eq:Cherkis:Index:Chern}
\begin{equation}	
-\frac{1}{8\pi^{2}}\int\Trace\left(F_{\A}\wedge F_{\A}\right)=\text{ind}_{D}\left(\mathfrak{su}(2),\mathfrak{p}^{\C}_\mu\right)\left(n_0+\alpha_\mu\left(\omega\right)\right)
\end{equation}
with $n_0=1$ for $\mu =0$ and $n_0=0$ otherwise. Here $\text{ind}_{D}\left(\mathfrak{su}(2),\mathfrak{p}^{\C}_\mu\right)$ is the Dynkin index of the Lie algebra morphism $\mathfrak{su}(2)\ra \Lie{u}(\tu{dim}_\C\,\Lie{p}^\C)$, \ie the ratio between the pull-back of the Killing form of $\Lie{u}(\tu{dim}_\C\,\Lie{p}^\C)$ and the Killing form of $\Lie{su}_2$. With our conventions, the Killing form of $\Lie{u}(n)$ is given up to a sign by the trace of the product of two matrices, so that the positive coroot of $\Lie{su}_2$ has norm $\sqrt{2}$ and $\text{ind}_{D}\left(\mathfrak{su}(2),\mathfrak{p}^{\C}_\mu\right)$ is half the trace of the endomorphism $[\alpha^\vee_\mu,[\alpha^\vee_\mu,\,\cdot\,]]$ of $\Lie{p}^\C_\mu$. We calculate
\begin{equation}
\begin{split}
\text{ind}_{D}\left(\mathfrak{su}(2),\mathfrak{p}^{\C}_\mu\right)&= \tfrac{1}{2} \sum_{\alpha\neq \pm \alpha_\mu}{\alpha(\alpha^\vee_\mu)^2} = \sum_{\alpha\in R^+}{\alpha(\alpha_\mu^\vee)^2} -4=\tfrac{1}{2}\text{ind}_{D}\left(\mathfrak{g},\text{Ad}\right)\| \alpha^\vee_\mu\|^2_{\mathfrak{g}}-4.
\end{split}
\end{equation}
\end{subequations}
Here $\text{ind}_{D}\left(\mathfrak{g},\text{Ad}\right)$ is the Dynkin index of the Lie algebra homomorphism $\Lie{g}\ra \Lie{u}(\Lie{g}^\C)$ given by the adjoint representation. It can be shown \cite[Example 1.2]{Panyushev_2015} that
\begin{equation}\label{eq:Dynkin:Index:Adjoint}
\text{ind}_{D}\left(\mathfrak{g},\text{Ad}\right) = 2\left( 1-\rho(\alpha_0^\vee)\right),
\end{equation}
where $\rho = \tfrac{1}{2}\sum_{\alpha\in R^+}\alpha$ denotes the half-sum of positive roots.

In order to calculate the boundary term in the index theorem of \cite[Theorem D]{Cherkis_2021}, observe that $\Lie{p}_\mu\otimes\C = \bigoplus_{\alpha\neq \pm \alpha_\mu}\Lie{g}_\alpha$ in terms of the decomposition of $\Lie{g}\otimes \C$ into root spaces. The asymptotic form of $\A$ preserves this decomposition and the holonomy and magnetic charge of the line bundle arising from $\Lie{g}_\alpha$ for $\alpha\neq\pm\alpha_\mu$ are, respectively, $\alpha (\omega)$ and $\alpha (\alpha_\mu^\vee)$. Since $\alpha(\omega)\in (0,1)$ the asymptotic holonomy is non-trivial and therefore the application of \cite[Theorem D]{Cherkis_2021} is justified. Taking $k=0$ in the latter formula (since $\R^3\times S^1 = \tu{TN}_0$) and taking into account the different convention of magnetic charge, we calculate that the boundary term in the index formula of \cite[Theorem D]{Cherkis_2021} is
\begin{equation}\label{eq:Cherkis:Index:Boundary}
\begin{split}
2\sum_{\alpha\neq |\alpha_\mu|}\left(\tfrac{1}{2}-\alpha\left(\omega\right)\right)\alpha(\alpha_\mu^{\vee}) &= 2\sum_{\alpha\in R^+}\left(\tfrac{1}{2}-\alpha\left(\omega\right)\right)\alpha(\alpha_\mu^{\vee}) - 4\,\tu{sign}\,\alpha_\mu\,\left( \tfrac{1}{2}-|\alpha_\mu (\omega)|\right)\\
 &= 2\left( \rho (\alpha_\mu^\vee) - \tu{sign}\,\alpha_\mu\right) -\left( \tfrac{1}{2}\text{ind}_{D}\left( \mathfrak{g},\text{Ad}\right) \|\alpha_\mu^\vee\|_\Lie{g}^2 -4\right) \alpha_\mu (\omega) .
\end{split}
\end{equation}
Here $|\alpha_\mu| = (\tu{sign}\,\alpha_\mu)\, \alpha_\mu$ and we used
\[
2\sum_{\alpha\in R^+}{\alpha (\omega)\, \alpha (\alpha_\mu^\vee)} = \text{ind}_{D}\left(\mathfrak{g},\text{Ad}\right) \langle \omega,\alpha_\mu^\vee\rangle_{\Lie{g}} = \tfrac{1}{2}\text{ind}_{D}\left(\mathfrak{g},\text{Ad}\right)\| \alpha_\mu^\vee\|_{\Lie{g}}^2 \,\alpha_\mu (\omega).
\]

Putting together \eqref{eq:Cherkis:Index:Chern} and \eqref{eq:Cherkis:Index:Boundary} we obtain that
\[
\text{index}\left(D^-_{\A_{\rho}},E^{\C}_\mu\right)= \left( \tfrac{1}{2}\text{ind}_{D}\left( \mathfrak{g},\text{Ad}\right) \|\alpha_\mu^\vee\|_\Lie{g}^2 -4\right) n_0 + 2\left( \rho (\alpha_\mu^\vee) - \tu{sign}\,\alpha_\mu\right).
\]
If $\mu \neq 0$ then $\alpha_\mu>0$, $n_0=0$ and $\rho(\alpha_\mu^\vee)=1$. If $\mu=0$, then $\alpha_0<0$, $n_0=1$, $\|\alpha_0^\vee\|^2_\Lie{g}=2$ (since $\alpha_0^\vee$ is a long coroot) and the vanishing of the index follows from \eqref{eq:Dynkin:Index:Adjoint}.
\endproof	
\end{prop}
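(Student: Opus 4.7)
The plan is to reduce the index computation for $D_\A$ on $E_\mu$-valued $1$-forms to an index computation for a twisted Dirac operator, and then to apply the ALF index theorem of Cherkis--Larra\'in-Hubach--Stern \cite[Theorem D]{Cherkis_2021}. First, since $\R^3 \times S^1$ is hyperk\"ahler, the positive spinor bundle $S^+$ is trivial and $T^\ast M \otimes \C \simeq S^+ \otimes S^-$. Complexifying $E_\mu$ to $E_\mu^\C$ (the complex bundle associated to the $T' \times SU(2)$-representation $\Lie{p}_\mu \otimes \C$) and using that $E_\mu$ is a real form of $E_\mu^\C$, the index of $D_\A$ on $\Omega^1(\R^3 \times S^1; E_\mu)$ becomes twice the index of the negative Dirac operator $D^-_\A$ twisted by $E_\mu^\C$. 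Thus it suffices to show $\tu{index}(D^-_\A; E_\mu^\C) = 0$.

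Next I would apply the ALF index formula. The bulk contribution is essentially the second Chern number of $E_\mu^\C$ endowed with the connection induced by $\A_\mu(\omega)$. Since $\A_\mu(\omega)$ is an $SU(2)$-caloron (either $\A^+_{\rm BPS}$ or $\A^-_{\rm BPS}$) embedded via $\rho_\mu$, the Chern number relates to the caloron's Yang--Mills energy via the Dynkin index $\tu{ind}_D(\Lie{su}_2, \Lie{p}_\mu^\C)$ of the $SU(2)$-representation on $\Lie{p}_\mu^\C$, giving the formula of \eqref{eq:Cherkis:Index:Chern}. The boundary contribution in \cite[Theorem D]{Cherkis_2021} is determined by the weights of the asymptotic abelian connection acting on $\Lie{p}_\mu^\C$: the eigenvalues of the holonomy are $\alpha(\omega)$ and the magnetic charges are $\alpha(\alpha_\mu^\vee)$, where $\alpha$ runs over the roots $\alpha \neq \pm \alpha_\mu$. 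Since $\omega \in \mathring A^+$, all these holonomies are non-integral, so the index formula applies and produces \eqref{eq:Cherkis:Index:Boundary}.

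The bulk of the work is then Lie-theoretic bookkeeping. I would compute $\tu{ind}_D(\Lie{su}_2, \Lie{p}_\mu^\C)$ as a sum over positive roots of $\alpha(\alpha_\mu^\vee)^2$, minus the contribution from $\pm \alpha_\mu$ itself, and re-express this in terms of the adjoint Dynkin index $\tu{ind}_D(\Lie{g}, \tu{Ad})$, which by Panyushev's formula \eqref{eq:Dynkin:Index:Adjoint} equals $2(1 - \rho(\alpha_0^\vee))$ where $\rho$ is the half-sum of positive roots. A parallel simplification handles the boundary term, using the identity $2\sum_{\alpha \in R^+} \alpha(\omega)\,\alpha(\alpha_\mu^\vee) = \tfrac{1}{2}\tu{ind}_D(\Lie{g}, \tu{Ad})\|\alpha_\mu^\vee\|_\Lie{g}^2 \,\alpha_\mu(\omega)$, so the $\alpha_\mu(\omega)$-dependent piece cancels between bulk and boundary, leaving
\[
\tu{index}(D^-_\A; E_\mu^\C) = \left( \tfrac{1}{2}\tu{ind}_D(\Lie{g}, \tu{Ad}) \|\alpha_\mu^\vee\|_\Lie{g}^2 - 4\right) n_0 + 2\left(\rho(\alpha_\mu^\vee) - \tu{sign}\,\alpha_\mu\right).
\]

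Finally I would split into cases. For $\mu = 1, \dots, \tu{rk}$ the instanton number is $n_0 = 0$, $\tu{sign}\,\alpha_\mu = +1$, and $\rho(\alpha_\mu^\vee) = 1$ since $\alpha_\mu^\vee$ is a simple coroot and $\rho$ pairs to $1$ with simple coroots; so the index vanishes. For $\mu = 0$ one has $n_0 = 1$, $\tu{sign}\,\alpha_0 = -1$, and $\alpha_0^\vee$ is a long coroot of norm $\sqrt{2}$, so $\|\alpha_0^\vee\|_\Lie{g}^2 = 2$; substituting the Panyushev formula \eqref{eq:Dynkin:Index:Adjoint} gives $\tu{ind}_D(\Lie{g}, \tu{Ad}) - 4 = -2 - 2\rho(\alpha_0^\vee)$, which cancels exactly the boundary contribution $2(\rho(\alpha_0^\vee) + 1)$. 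I expect the main obstacle to be getting the conventions right, particularly the sign conventions of \cite{Cherkis_2021} relative to ours and the normalisation of the Dynkin index relative to the Killing form, since a small discrepancy in either would break the delicate cancellation; beyond that the argument is a direct if intricate Lie-theoretic verification.
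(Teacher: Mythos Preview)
Your proposal is correct and follows essentially the same approach as the paper: reduce to a twisted Dirac index via the hyperk\"ahler identification $T^\ast M\otimes\C\simeq S^+\otimes S^-$ with $S^+$ trivial, apply the ALF index theorem of \cite[Theorem D]{Cherkis_2021}, compute the bulk term via the Dynkin index and the boundary term via the root-space decomposition of $\Lie{p}_\mu^\C$, and then verify the resulting expression vanishes by a case split on $\mu=0$ versus $\mu\neq 0$ using $\rho(\alpha_\mu^\vee)=1$ for simple coroots and Panyushev's formula for the lowest root. Your caveat about conventions is apt --- the paper likewise flags the opposite orientation convention in \cite{Cherkis_2021} and the normalisation of the Killing form --- but otherwise there is nothing to add.
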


The proof of Theorem \ref{thm:Dimension:Fundamental:Calorons} is now complete.

\subsection{Excision}

We will now use an excision argument inspired by the proof of the excision principle in \cite[\S 7.1]{Donaldson:Kronheimer} (\cf also \cite[\S 8]{Foscolo:Deformation}) to prove the main result of this section.

\begin{theorem}\label{thm:Dimension}
For $\omega\in \mathring{A}^+$ and $\gamma_\tu{m} = \sum_{\mu=0}^{\tu{rk}}{n_\mu \alpha_\mu^\vee}\in \Lambda$ with $n_\mu \geq 0$ for all $\mu$ we have
\[
\
\tu{dim}\, \mathcal{M}_{\epsilon}\left(\omega,\gamma_{\tu{m}},n_{0}\right) = 4(n_0 + n_1 + \dots + n_{\tu{rk}}).
\]
\end{theorem}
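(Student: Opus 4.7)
The plan is to identify $\tu{dim}\,\mathcal{M}_\epsilon(\omega,\gamma_\tu{m},n_0)$ with the index of the deformation operator $D_\A$ at any $[\A]\in \mathcal{M}_\epsilon$, and to compute this index by an excision argument built around the gluing construction of Section \ref{sec:approximate solutions}. Since $D_\A\co C^{1,\alpha}_{\delta-1}\to C^{0,\alpha}_{\delta-2}$ is Fredholm and surjective by Proposition \ref{prop:Caloron:Fredholm:Dirac}, the dimension of the moduli space equals $\tu{ind}(D_\A)$. Fredholm indices are invariant under continuous deformations of the connection, so it suffices to compute $\tu{ind}(D_{\A'_\epsilon})$ for the approximate caloron $\A'_\epsilon$ from \eqref{eq:Approximate:Solution}, at any small $\epsilon>0$ for which Theorem \ref{thm:Existence} provides an exact caloron nearby.

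The core of the argument is an excision principle in the spirit of Donaldson--Kronheimer. By construction $\A'_\epsilon$ equals the fundamental caloron $\A_\mu(\omega^i_\mu)$ on each ball $U^i_\mu$ and the abelian background $\A_\tu{sing}$ on $U_\tu{sing}$, with a controlled interpolation in between. Using the uniform inverse estimate of Proposition \ref{prop:Linearised} (which extends from $\nabla_{\A'_\epsilon}^\ast\nabla_{\A'_\epsilon}$ to $D_{\A'_\epsilon}D_{\A'_\epsilon}^\ast$ via the Weitzenb\"ock identity and the smallness of $F^+_{\A'_\epsilon}$ from Lemma \ref{lemma:bounds on Fplus}), together with the kernel-localisation estimate of Proposition \ref{prop:Approximate:Dirac:Kernel}, I would assemble a parametrix for $D_{\A'_\epsilon}$ from the inverses of $D_{\A_\mu(\omega^i_\mu)}$ on each $U^i_\mu$ and a parametrix for $D_{\A_\tu{sing}}$ on $U_\tu{sing}$. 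Extracting the induced decomposition of kernel and cokernel would yield the index-additivity formula
\begin{equation*}
\tu{ind}(D_{\A'_\epsilon}) = \sum_{\mu,i}\tu{ind}(D_{\A_\mu(\omega^i_\mu)}) + \tu{ind}(D_{\A_\tu{sing}}),
\end{equation*}
with the indices on the right computed in weighted H\"older spaces with matching asymptotic conditions.

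Each contribution on the right is then known. Theorem \ref{thm:Dimension:Fundamental:Calorons} gives $\tu{ind}(D_{\A_\mu(\omega^i_\mu)})=4$ for every fundamental caloron, summing to $4(n_0+\cdots+n_\tu{rk})$. For the abelian contribution $\tu{ind}(D_{\A_\tu{sing}})$ I expect vanishing: under the reduction to the maximal torus $T\subset G$, the adjoint bundle splits asymptotically into a trivial $\Lie{h}$-valued summand and root-line bundles $L_\alpha$ on which the asymptotic holonomy $\alpha(\omega)\notin \Z$ is non-trivial. On each $L_\alpha$ the ALF index formula, as used in the proof of Proposition \ref{prop:Fundamental:Caloron:Dimension:Transverse}, produces contributions that cancel in $\pm\alpha$ pairs once combined with the magnetic charges of $\A_\tu{sing}$, while the Cartan summand reduces to the scalar Hodge--Dirac operator on $\R^3\times S^1$, whose $L^2$ kernel and cokernel vanish by the $L^2$-Hodge calculation of \cite{HHM} (the same input used in the proof of Proposition \ref{prop:Circle:Invariant:Calorons}). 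Combining these gives $\tu{dim}\,\mathcal{M}_\epsilon(\omega,\gamma_\tu{m},n_0)=4(n_0+n_1+\cdots+n_\tu{rk})$.

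The hard part will be the rigorous implementation of excision in this degenerating, non-compact setting. The ALF index theorem of \cite{Cherkis_2021,Nye:Singer,IndexFibredBoundary} does not apply directly to $D_{\A_\epsilon}$ on the full adjoint bundle precisely because the asymptotic adjoint action is trivial on $\Lie{h}$, as flagged in the introduction. The excision strategy circumvents this by isolating the $\Lie{h}$-part inside the abelian piece $\A_\tu{sing}$, where it is handled by Hodge theory, and rerouting all non-abelian topological content through the fundamental calorons, whose $4$-dimensional moduli spaces are already in hand. Making the parametrix gluing and the associated kernel/cokernel decomposition quantitative uniformly as $\epsilon\to 0$ is exactly where the weighted estimates of Section \ref{sec:Analysis}, most crucially Proposition \ref{prop:Approximate:Dirac:Kernel}, will do the work.
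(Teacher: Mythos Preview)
Your overall strategy---reduce $\dim\mathcal{M}$ to $\tu{ind}(D_\A)$ via surjectivity, deform to the approximate caloron, and run an excision argument powered by Propositions \ref{prop:Linearised} and \ref{prop:Approximate:Dirac:Kernel}---matches the paper. The difference is in how the excision is set up. You split the index as $\sum_{\mu,i}\tu{ind}(D_{\A_\mu(\omega^i_\mu)}) + \tu{ind}(D_{\A_\tu{sing}})$ and then argue the second term vanishes. The paper instead compares $D_{\A'_\epsilon}$ directly to the disjoint union of operators $D_{\A^i_\mu}$, where each $\A^i_\mu$ is a connection on the \emph{whole} of $\R^3\times S^1$ built from a single point $p^i_\mu$ (so it is a fundamental caloron of type $\alpha_\mu^\vee$, modified as $\A'_\epsilon$ was). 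The argument is then purely a kernel comparison: one constructs cut-and-project maps $\ker D_\A \to \bigoplus_{\mu,i}\ker D_{\A^i_\mu}$ and back, of the form $\xi\mapsto \gamma^i_\mu\xi - G^i_\mu D_{\A^i_\mu}(\gamma^i_\mu\xi)$, and shows both are injective for $\epsilon$ small using Proposition \ref{prop:Approximate:Dirac:Kernel}. Theorem \ref{thm:Dimension:Fundamental:Calorons} then gives each summand index $4$, and no separate abelian term ever appears.

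The gap in your version is the term $\tu{ind}(D_{\A_\tu{sing}})$. The connection $\A_\tu{sing}$ is genuinely singular at the $p^i_\mu$, so $D_{\A_\tu{sing}}$ is an operator on a punctured manifold and you must specify weighted spaces at each puncture before the index even makes sense; the cancellation-in-$\pm\alpha$-pairs heuristic and the appeal to \cite{HHM} for the Cartan part do not by themselves handle this. One can presumably push this through by choosing matching weights at the punctures so that the relative index vanishes, but that is extra work the paper avoids entirely by never isolating the singular abelian piece. The paper's route is both cleaner and more robust: it keeps every operator defined on all of $\R^3\times S^1$ and reduces the whole computation to Theorem \ref{thm:Dimension:Fundamental:Calorons}.
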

\proof
For a caloron $\A$ whose gauge equivalence class lies in $\mathcal{M}_{\epsilon}\left(\omega,\gamma_{m},n_{0}\right)$ the deformation operator $D_{\A}$ is surjective by Proposition \ref{prop:Caloron:Fredholm:Dirac} and therefore the theorem is equivalent to showing the index formula
\begin{equation}\label{eq:Dimension}
{\rm index}\, D_{\A}=4(n_0 + n_1 + \dots + n_{\tu{rk}}).
\end{equation}

First of all, note that by the compactness of the multiplication map in \eqref{eq:Comopact:Embedding:Weighted:Holder} the index remains unchanged if we replace $\A$ with the approximate caloron $\A'_\epsilon$ constructed in \eqref{eq:Approximate:Solution}. In fact, we consider a slight variation of the construction in \eqref{eq:Approximate:Solution} and assume that $\A$ is $S^1$--invariant and abelian outside of $\bigcup_{\mu,i}B_\epsilon(p_\mu^i)\times S^1$ and coincides with the model $\A_\infty(\omega^i_\mu,\alpha_\mu^\vee)$ on the ``annulus'' $\left(B_2(p^i_\mu)\setminus B_\epsilon(p^i_\mu)\right)\times S^1$. Similarly, we let $\A^i_\mu$ be a connection on $\R^3\times S^1$ with similar properties as $\A$ but constructed from the single point $p^i_\mu$ instead of the collection $\{ p^i_\mu\} _{\mu,i}$. Then $\A$ coincides with $\A^i_\mu$ in $B_2(p^i_\mu)\times S^1$. The idea is to show that the index of $D_{\A}$ coincides with the index of the disjoint union of the $D_{\A^i_\mu}$'s and then use Theorem \ref{thm:Dimension:Fundamental:Calorons} to conclude that the latter index equal the right-hand side of \eqref{eq:Dimension}. 

Now, $D_\A$ and $D_{\A^i_\mu}$ are all surjective with right inverses $G=D_{\A}^\ast (D_\A D_\A^\ast)^{-1}$ and similarly defined $G^i_\mu$ with uniformly bounded norms independently of $\epsilon$ by Proposition \ref{prop:Linearised}. Hence showing that
\[
{\rm index}\, D_{\A} = \sum_{\mu,i}{\rm index}\, D_{\A^i_\mu}
\] 
amounts to showing that the finite dimensional kernels of the operators on the two sides of the equation have the same dimension. In order to show this we will construct maps between the kernels of $D_\A$ and $\bigsqcup_{\mu,i}D_{\A^i_\mu}$ and show they are injective when $\epsilon$ is small enough. Rescaling back to a fix $\epsilon$ then implies the result for any $\epsilon > 0$.

Fix cut-off functions $\gamma^i_\mu, \beta^i_\mu$ with
\[
\gamma^i_\mu \equiv \begin{dcases} 1 & \mbox{ on }B_1(p^i_\mu)\times S^1,\\ 0  	& \mbox{ outside }B_2(p^i_\mu)\times S^1, \end{dcases} \qquad \beta^i_\mu \equiv \begin{dcases} 0 & \mbox{ on }B_1(p^i_\mu)\times S^1,\\ 1 & \mbox{ outside }B_2(p^i_\mu)\times S^1, \end{dcases}
\]
and the additional constraint that $\beta^i_\mu\equiv 1 $ on the support of $d\gamma^i_\mu$.

Now, suppose that $\xi$ is an element in the kernel of $D_\A$. We define an element $\xi^i_\mu$ in the kernel of $D_{\A^i_\mu}$ by $\xi^i_\mu = \gamma^i_\mu\xi - G^i_\mu D_{\A^i_\mu} (\gamma^i_\mu\xi)$. Indeed, note that
\begin{equation}\label{eq:Dimension:1}
\| D^i_\mu (\gamma^i_\mu\xi)\|_{C^{0,\alpha}_{\delta-2}} = \| d\gamma^i_\mu \otimes \xi\|_{C^{0,\alpha}_{\delta-2}} \lesssim \| \beta^i_\mu \xi\|_{C^{0,\alpha}_{\delta-1}}<\infty
\end{equation}
since $D_{\A}=D_{\A^i_\mu}$ on the support of $\gamma^i_\mu$ and on the support of $d\gamma^i_\mu$ the weight function $r^i_\mu =O(1)$ and $\beta^i_\mu \equiv 1$. Here $a\lesssim b$ means $a\leq C b$ for a constant $C>0$ independent of $a,b$ and $\epsilon$.

We claim that the map $\xi \mapsto \{ \xi^i_\mu\}_{\mu, i}$ is injective for $\epsilon$ sufficiently small. Suppose not, so that there exists $\xi\in \ker D_{\A}$ such that $\gamma^i_\mu\xi = G^i_\mu D^i_\mu (\gamma^i_\mu\xi)$ for all $\mu, i$. Propositions \ref{prop:Approximate:Dirac:Kernel} and \ref{prop:Approximate:Dirac:Schauder} imply that for $\epsilon$ small enough we have
\begin{equation}\label{eq:Dimension:2}
\| \xi \|_{C^{1,\alpha}_{\delta-1}} \lesssim \sum_{\mu,i}\| \gamma^i_\mu \xi\|_{C^{0,\alpha}_{\delta-1}}
\end{equation}

Then, using Propositions \ref{prop:Linearised} and \ref{prop:Approximate:Dirac:Kernel} and \eqref{eq:Dimension:1} we obtain 
\[
\| \gamma^i_\mu\xi \|_{C^{0,\alpha}_{\delta-1}} \leq \| \gamma^i_\mu\xi \|_{C^{1,\alpha}_{\delta-1}} = \| G^i_\mu D^i_\mu (\gamma^i_\mu\xi) \|_{C^{1,\alpha}_{\delta-1}} \lesssim \| \beta^i_\mu \xi\|_{C^{0,\alpha}_{\delta-1}} \leq \eta \|  \xi \|_{C^{1,\alpha}_{\delta-1}}.
\]
Summing over $\mu, i$ and using \eqref{eq:Dimension:2} implies that for $\eta$, and therefore $\epsilon$, small enough we must in fact have $\xi=0$.

Hence we have shown that ${\rm dim}\ker D_{\A} \leq \sum_{\mu,i}{\rm dim}\ker D_{\A^i_\mu}$. The opposite inequality is shown in analogous way by constructing an injective map from $\bigoplus_{\mu,i}\ker D_{\A^i_\mu}$ to $\ker D_{\A}$.
\endproof

\bibliographystyle{plain}
\bibliography{calorons_Lorenzo}
\end{document}